\title{New Results on Congruence Boolean Lifting Property}
\author{George Georgescu \\ \footnotesize University of Bucharest\\ \footnotesize Faculty of Mathematics and Computer Science\\ \footnotesize Bucharest, Romania\\ \footnotesize Email: georgescu.capreni@yahoo.com}
\date{}
\begin{document}
\maketitle

\begin{abstract}
The Lifting Idempotent Property ($LIP$) of ideals in commutative rings inspired the study of Boolean lifting properties in the context of other concrete algebraic structures ($MV$-algebras, commutative l-groups, $BL$-algebras, bounded distributive lattices, residuated lattices,etc.), as well as for some types of universal algebras (C. Muresan and the author defined and studied  the Congruence Boolean Lifting Property ($CBLP$) for congruence modular algebras).

 A lifting ideal of a ring $R$  is an ideal of $R$ fulfilling $LIP$. In a recent paper, Tarizadeh and Sharma obtained new results on lifting ideals in commutative rings. The aim of this paper is to extend an important part of their results to congruences with $CBLP$ in semidegenerate congruence modular algebras. The reticulation of such algebra will play an important role in our investigations (recall that the reticulation of a congruence modular algebra $A$ is a bounded distributive lattice $L(A)$ whose prime spectrum is homeomorphic with Agliano's prime spectrum of $A$).

 Almost all results regarding $CBLP$ are obtained in the setting of semidegenerate congruence modular algebras having the property that the reticulations preserve the Boolean center.

 The paper contains several properties of congruences with $CBLP$. Among the results we mention a characterization theorem of congruences with $CBLP$. We achieve various conditions that ensure $CBLP$. Our results can be applied to a lot of types of concrete structures: commutative rings, $l$-groups, distributive lattices, $MV$-algebras, $BL$-algebras, residuated lattices, etc.

\end{abstract}

\textbf{Keywords}: commutator operation, semidegenerate congruence  modular algebras, prime congruences, reticulation, Boolean center, Congruence Boolean Lifting Property.
\newtheorem{definitie}{Definition}[section]
\newtheorem{propozitie}[definitie]{Proposition}
\newtheorem{remarca}[definitie]{Remark}
\newtheorem{exemplu}[definitie]{Example}
\newtheorem{intrebare}[definitie]{Open question}
\newtheorem{lema}[definitie]{Lemma}
\newtheorem{teorema}[definitie]{Theorem}
\newtheorem{corolar}[definitie]{Corollary}

\newenvironment{proof}{\noindent\textbf{Proof.}}{\hfill\rule{2mm}{2mm}\vspace*{5mm}}

\section{Introduction}

 \hspace{0.5cm} An ideal $I$ of a unital ring $R$ is said to be a lifting ideal if it fulfills the Lifting Idempotent Property ($LIP$): any idempotent of the quotient ring $R/I$ can be lifted to an idempotent of $R$ \cite{a}, \cite{Tar4}. If any ideal of $R$ is a lifting ideal then we say that $R$ has $LIP$. A vast literature was dedicated to lifting ideals and to rings that fulfill $LIP$ (see \cite{a}, \cite{Aghajani}, \cite{Al-Ezeh2}, \cite{b}, \cite{c}, \cite{Tar4}) and many important results for rings with $LIP$ were obtained. For example, in the case of commutative rings it was proved a notable theorem that asserts that the rings with $LIP$, the clean rings and the exchange rings coincide \cite{a}.

 Inspired by the theory of rings with $LIP$, various lifting properties were defined for other algebraic structures ($MV$-algebras \cite{Filipoiu}, commutative $l$-groups \cite{f}, $BL$-algebras \cite{DiNola}, pseudo $BL$-algebras \cite{B1} , \cite{B2}, bounded distributive lattices \cite{Cheptea}, residuated lattices \cite{GCM}, orthomodular lattices \cite{Kermani}, etc.). A lifting property, named Congruence Boolean Lifting Property ($CBLP$), was studied in a universal algebra framework: for congruence distributive algebras \cite{GM3} and for semidegenerate congruence modular algebras \cite{GKM}.

 In the recent paper \cite{Tar4}, Tarizadeh and Sharma obtained new algebraic and topological results on lifting ideals in a commutative ring (among them we mention an important characterization theorem for lifting ideals).

The aim of this paper is to extend some results of \cite{Tar4} to congruences with $CBLP$ in some types of universal algebras, so we continue the investigations on $CBLP$ started in \cite{GKM}.

The wonderful commutator theory, developed by Fresee and McKenzie in \cite{Fresee}, allows us to define a "prime spectrum" $Spec(A)$ of any congruence modular algebra $A$. According to Agliano's paper \cite{Agliano}, $Spec(A)$ fulfills remarkable topological properties. On the other hand, the hypothesis that the congruence modular algebra $A$ is semidegenerate causes the set $Con(A)$ of congruences of $A$ to be endowed with a structure of commutative and integral complete $l$-groupoid \cite{Birkhoff} (in fact, $Con(A)$ becomes an $mi$-structure in the sense of \cite{GeorgescuVoiculescu2}).

 Almost all results regarding $CBLP$ are obtained in the setting of semidegenerate congruence modular algebras having the property that the reticulation
  preserves the Boolean center.

 The paper contains several properties of congruences with $CBLP$. Among the results we mention a characterization theorem of congruences with $CBLP$. We achieve various conditions that assure $CBLP$. Our results can be applied to a lot of types of concrete structures: commutative rings, $l$-groups, distributive lattices, $MV$-algebras, $BL$-algebras, residuated lattices, etc.

 Now we shall describe the content of the paper. Section $2$ contains some preliminaries: the definition and elementary properties of commutator operation \cite{Fresee}, some basic facts on semidegenerate congruence modular algebras \cite{Agliano}, \cite{Kollar}, prime congruences, the radical of a congruence, Agliano's spectrum \cite{Agliano}.

 In Section $3$ we remember from \cite{GM2}, \cite{GG} the construction and the main properties of reticulation $L(A)$ associated with any semidegenerate congruence modular algebra $A$ that fulfills the following property: the set K(A) of compact congruences of $A$ is closed under the commutator. The reticulation $L(A)$ is a bounded distributive lattice whose prime spectrum $Spec(L(A))$ is homeomorphic with $Spec(A)$. This functorial construction was firstly done for commutative rings \cite{Simmons},\cite{Johnstone}, \cite{Al-Ezeh2}, then for many other algebraic structures. It is a good vehicle for exporting some properties from bounded distributive lattices to algebras and vice-versa (see  \cite{Al-Ezeh2},\cite{Dickmann}, \cite{DiNola}, \cite{Filipoiu}, \cite{GG}, \cite{Lenzi}, \cite{Muresan}, \cite{Simmons}). In this paper we shall use the reticulation for transporting some properties of lifting properties from rings to algebras.

 In Section $4$ we characterize the algebras whose reticulation preserves the Boolean center. We prove that the Boolean center $B(Con(A))$ of an algebra $A$ is isomorphic to the Boolean algebra $Clop(Spec(A))$ of clopen subsets of $Spec(A)$ (this results generalizes the Grothendieck correspondence between the idempotents of a ring and the clopens of its prime spectrum). The mentioned result (= Theorem 4.9) is used in Section $5$ to prove some properties of congruences with $CBLP$. We find the form of the clopen subsets of the maximal spectrum $Max(A)$ of the algebra $A$ and we characterize the situation whenever $Rad(A)$ has $CBLP$ ($Rad(A)= \bigcap Max(A)$ is a congruence that generalizes the Jacobson radical of a ring).

A characterization theorem of congruences with $CBLP$ is obtained in Section $6$ (see Theorem 6.3). The proof of this result is based on Hochster's theorem \cite{Hochster}, \cite{Dickmann}: for any bounded distributive lattice $L$ there exists a commutative ring $R$ such that the reticulation $L(R)$ of $R$ \cite{Simmons} is isomorphic with $L$.

Section $7$ concerns the relationship between lifting properties and orthogonal sets of complemented congruences. If a morphism $u$ of algebras lifts the complemented congruences then it is proven that $u$ lifts the countable orthogonal sets of complemented congruences. We investigate how some particular orthogonal sets of complemented congruences can be lifted by the morphisms of the form $A\rightarrow A/{\theta}$, where $\theta$ is an arbitrary congruence of the algebra $A$.

\section{Preliminaries}

 \hspace{0.5cm} Let $\tau$ be a finite signature of universal algebras. Throughout this paper we shall assume that the algebras have the signature $\tau$. Let $A$ be an algebra and $Con(A)$ the complete lattice of its congruences; $\Delta_A$ and $\nabla_A$ shall be the first and the last elements of $Con(A)$. If $X\subseteq A^2$ then $Cg_A(X)$ will be the congruence of $A$ generated by $X$; if $X = \{(a,b)\}$ with $a,b\in A$ then $Cg_A(a,b)$ will denote the (principal) congruence generated by $\{(a,b)\}$. $Con(A)$ is an algebraic lattice: the finitely generated congruences of $A$ are its compact elements. $K(A)$ will denote the set of compact congruences of $A$. We observe that $K(A)$ is closed under finite joins of $Con(A)$ and $\Delta_A\in K(A)$.

 For any $\theta\in Con(A)$, $A/\theta$ is the quotient algebra of $A$ w.r.t. $\theta$; if $a\in A$ then $a/\theta$ is the congruence class of $a$ modulo $\theta$. We shall denote by $p_\theta:A\rightarrow A/\theta$ the canonical surjective $\tau$ - morphism defined by $p_\theta(a) = a/\theta$, for all $a\in A$.

 Let  $\mathcal{V}$  be a congruence  modular variety of $\tau$ - algebras. Following \cite{Fresee}, p.31, the commutator is the greatest operation $[\cdot,\cdot]_A$ on the congruence lattices $Con(A)$ of members $A$ of $\mathcal{V}$ such that for any surjective morphism $f:A\rightarrow B$ of $\mathcal{V}$ and for any $\alpha,\beta\in Con(A)$, the following conditions hold

 (2.1) $[\alpha,\beta]_A\subseteq \alpha\cap \beta$;

 (2.2) $[\alpha,\beta]_A\lor Ker(f)$ = $f^{-1}([f(\alpha\lor Ker(f)),f(\beta\lor Ker(f))]_B)$.

 If $\alpha, \beta, \theta\in Con(A)$ then, by (2.2) we get

 (2.3) $([\alpha,\beta]_A\lor \theta)/\theta$ = $[(\alpha\lor \theta)/\theta,(\beta\lor \theta)/\theta]_{A/\theta}$.

 The commutator operation is commutative, increasing in each argument and distributive with respect to arbitrary joins. If there is no danger of confusion then we write $[\alpha,\beta]$ instead of $[\alpha,\beta]_A$.

 \begin{propozitie}\cite{Fresee}
 For any congruence  modular variety $\mathcal{V}$ the following are equivalent:
\newcounter{nr}
\begin{list}{(\arabic{nr})}{\usecounter{nr}}
\item  $\mathcal{V}$ has Horn - Fraser property: if $A,B$ are members of $\mathcal{V}$ then the lattices $Con(A\times B)$ and $Con(A)\times Con(B)$ are isomorphic;
\item $[\nabla_A,\nabla_A] = \nabla_A$, for all $A\in \mathcal{V}$;
\item $[\theta,\nabla_A] = \theta$, for all $A\in \mathcal{V}$ and $\theta\in Con(A)$.
\end{list}
\end{propozitie}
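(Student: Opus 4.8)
The plan is to prove the cycle by splitting the three conditions into two halves: an \emph{internal} equivalence $(2)\Leftrightarrow(3)$ phrased purely through the commutator, and a \emph{structural} equivalence $(1)\Leftrightarrow(2)$ that reads the commutator off the congruence lattice of a product. The standing facts I use throughout are the general properties recalled above — the commutator is commutative, monotone in each variable, join-distributive, satisfies (2.1), and transforms under surjections by (2.2), hence by (2.3) on quotients — together with the elementary observation that for any $A,B\in\mathcal{V}$ the assignment $(\alpha,\beta)\mapsto\alpha\times\beta$ is a lattice embedding of $Con(A)\times Con(B)$ into $Con(A\times B)$, so that $(1)$ is exactly the assertion that this embedding is onto.

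For $(2)\Leftrightarrow(3)$: the implication $(3)\Rightarrow(2)$ is immediate on setting $\theta=\nabla_A$. For $(2)\Rightarrow(3)$, note first that (2.1) already gives $[\theta,\nabla_A]\subseteq\theta\cap\nabla_A=\theta$, so only the reverse inclusion $\theta\subseteq[\theta,\nabla_A]$ is at stake. Writing $\psi=[\theta,\nabla_A]\subseteq\theta$ and applying (2.3) with $\alpha=\theta$, $\beta=\nabla_A$ and the quotient taken modulo $\psi$, the left-hand side collapses to $\Delta_{A/\psi}$ while the right-hand side is $[\theta/\psi,\nabla_{A/\psi}]_{A/\psi}$; hence $[\theta/\psi,\nabla_{A/\psi}]=\Delta_{A/\psi}$, i.e. $\theta/\psi$ is a central (in particular abelian) congruence of $A/\psi$. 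The task is then to force $\theta/\psi=\Delta_{A/\psi}$ from hypothesis $(2)$. This is the crux: a nontrivial central congruence produces, via the structure theory of abelian algebras in \cite{Fresee}, a nontrivial abelian section on which the top commutator vanishes, contradicting $[\nabla,\nabla]=\nabla$ for that section. Carrying this reduction out cleanly — isolating the section as a genuine member of $\mathcal{V}$ whose top commutator can be tested against $(2)$ — is the step I expect to be the main obstacle.

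For $(1)\Leftrightarrow(2)$: the direction $(1)\Rightarrow(2)$ I would prove by contraposition. If $[\nabla_A,\nabla_A]\neq\nabla_A$ for some $A$, then $\bar A:=A/[\nabla_A,\nabla_A]$ is a nontrivial algebra with $[\nabla_{\bar A},\nabla_{\bar A}]=\Delta_{\bar A}$ (by the same (2.3) computation), i.e. $\bar A$ is abelian, hence — in a congruence modular variety — polynomially equivalent to a module. A nontrivial module carries on $\bar A\times\bar A$ a ``graph'' congruence correlating the two coordinates that is not of the form $\alpha\times\beta$, a skew congruence; this defeats the embedding of the first paragraph for the pair $(\bar A,\bar A)$, so $(1)$ fails. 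Conversely, for $(2)\Rightarrow(1)$ I would use the commutator description of skew congruences: in a congruence modular variety every congruence of $A\times B$ lying over $\eta_1\cap\eta_2=\Delta_{A\times B}$, where $\eta_1,\eta_2$ are the projection kernels, is a product congruence unless it arises from an abelian correlation between the factors, and hypothesis $(2)$ — equivalently $(3)$, already in hand — rules these out, yielding surjectivity of $(\alpha,\beta)\mapsto\alpha\times\beta$ and hence $(1)$.

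Thus the delicate points are commutator-theoretic rather than computational: in $(2)\Rightarrow(3)$, upgrading ``$\theta/\psi$ centralizes the top'' to ``$\theta/\psi$ is trivial'', and in $(1)\Leftrightarrow(2)$, the exact bookkeeping of skew congruences of a product in terms of shared abelian congruences. Both rest on the structure theory of abelian algebras in congruence modular varieties from \cite{Fresee}; the routine lattice-theoretic verifications — that $(\alpha,\beta)\mapsto\alpha\times\beta$ embeds, that (2.1) supplies one inclusion, that (2.3) reduces the quotient computations — are straightforward and I would record them with only brief justification.
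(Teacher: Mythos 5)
The paper offers no proof of this proposition: it is quoted verbatim from Freese--McKenzie \cite{Fresee}, so your attempt can only be measured against the standard proof there. Your easy steps are fine: $(3)\Rightarrow(2)$ by specialization, the reduction of $(2)\Rightarrow(3)$ via (2.1) and (2.3) to showing that hypothesis $(2)$ forbids a nontrivial \emph{central} congruence, and $(1)\Rightarrow(2)$ via the affine representation of abelian algebras and the difference congruence $\{((a,b),(c,d)) : a-b=c-d\}$ on $\bar A\times\bar A$, which is indeed skew. But the two remaining implications have genuine gaps, and both trace to the same misconception: that hypothesis $(2)$ (equivalently $(3)$) excludes nontrivial \emph{abelian} congruences, sections, or ``abelian correlations between factors''. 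It does not. The variety of unital rings is congruence modular and satisfies $(1)$, $(2)$ and $(3)$ (every ideal of $A\times B$ is a product $I\times J$ because $(a,b)\cdot(1,0)=(a,0)$; and $[I,R]=IR+RI=I$ since $1\in R$), yet it has members with nontrivial abelian congruences, e.g. $I=(x)$ in $\mathbb{Z}[x]/(x^2)$, where $[I,I]=I^2=0$. So any argument that derives a contradiction with $(2)$ from a merely abelian congruence or section proves something false; the contradiction must use centrality in an essential way.

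Concretely: in $(2)\Rightarrow(3)$, the step you yourself flag as ``the main obstacle'' is the entire content of the theorem, and your proposed mechanism (``structure theory of abelian algebras yields a nontrivial abelian section'') cannot be run from abelianness alone, by the example above; moreover congruence classes are not subalgebras, so there is no section to ``isolate'' directly. The standard execution (Freese--McKenzie, Ch.\ 4) passes to the graph algebra $C=\gamma\leq B\times B$ (a subalgebra, hence in $\mathcal{V}$) with projection kernels $\eta_1,\eta_2$, and to the congruence $\Delta$ of $C$ generated by the pairs $((x,x),(y,y))$; the term-condition characterization of centrality is what gives $\eta_1\cap\Delta=\Delta_C$ with the diagonal a single $\Delta$-class, whence $\nabla_C=\eta_1\vee\Delta$, $[\eta_1,\eta_1]=\Delta_C$, and by join-distributivity $[\nabla_C,\nabla_C]=[\eta_1,\eta_1]\vee[\eta_1,\Delta]\vee[\Delta,\Delta]\subseteq\Delta\subsetneq\nabla_C$, contradicting $(2)$. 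Without this (or an equivalent) construction your cycle does not close. For $(2)\Rightarrow(1)$, your dichotomy ``product congruence unless there is an abelian correlation, and $(2)$ rules those out'' is exactly the false route; fortunately it is also unnecessary, because $(3)\Rightarrow(1)$ is a three-line computation with no structure theory at all: for $\theta\in Con(A\times B)$ set $\nu=(\theta\vee\eta_1)\cap\eta_2$; then $(3)$, $\eta_1\vee\eta_2=\nabla$, and join-distributivity give $\nu=[\nu,\eta_1]\vee[\nu,\eta_2]\subseteq(\eta_1\cap\eta_2)\vee[\theta,\eta_2]\vee[\eta_1,\eta_2]\subseteq\theta$, and the modular law then yields $(\theta\vee\eta_1)\cap(\theta\vee\eta_2)=\theta\vee\nu=\theta$, which says precisely that $\theta$ is the product congruence determined by $(\theta\vee\eta_1)/\eta_1$ and $(\theta\vee\eta_2)/\eta_2$. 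So: your $(1)\Rightarrow(2)$ and your reduction of $(2)\Rightarrow(3)$ can be kept; the ``abelian correlation'' bookkeeping should be replaced by this computation, and the central-congruence crux must actually be supplied.
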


Following \cite{Kollar}, a variety $\mathcal{V}$ is semidegenerate if no nontrivial algebra in $\mathcal{V}$ has one - element subalgebras. By \cite{Kollar}, a variety $\mathcal{V}$ is semidegenerate if and only if for any algebra $A$ in $\mathcal{V}$, the congruence $\nabla_A$ is compact.

\begin{propozitie}\cite{Agliano}
If $\mathcal{V}$ is a semidegenerate congruence  modular variety then for each algebra $A$ in $\mathcal{V}$ we have $[\nabla_A,\nabla_A] = \nabla_A$.
\end{propozitie}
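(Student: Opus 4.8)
The inclusion $[\nabla_A,\nabla_A]\subseteq\nabla_A$ is immediate from (2.1), so the whole content lies in the reverse inclusion $\nabla_A\subseteq[\nabla_A,\nabla_A]$. The plan is to argue by contradiction and reduce to the \emph{abelian} case. Suppose $\theta:=[\nabla_A,\nabla_A]\subsetneq\nabla_A$ and pass to the quotient $B:=A/\theta$, which is then a nontrivial member of $\mathcal{V}$. Applying the quotient rule (2.3) with $\alpha=\beta=\nabla_A$ and this particular $\theta$, and using $\nabla_A\lor\theta=\nabla_A$ together with $[\nabla_A,\nabla_A]=\theta$, I would compute
\[
[\nabla_B,\nabla_B]_B=([\nabla_A,\nabla_A]_A\lor\theta)/\theta=(\theta\lor\theta)/\theta=\Delta_B .
\]
Thus a failure of the theorem produces a nontrivial algebra $B\in\mathcal{V}$ satisfying $[\nabla_B,\nabla_B]=\Delta_B$, i.e. an abelian algebra.

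So the theorem reduces to the assertion that a semidegenerate congruence modular variety contains no nontrivial abelian algebra. To reach the contradiction I would invoke the structure theory of abelian algebras in modular varieties from \cite{Fresee}: such a $B$ is affine, so the difference term acts on $B$ as an honest Mal'cev term and endows $B$ with a compatible abelian-group (module-like) structure. Semidegeneracy enters through Kollár's criterion quoted above: for \emph{every} member of $\mathcal{V}$ the top congruence $\nabla$ is compact and no nontrivial member has a one-element subalgebra. On a nontrivial affine $B$ the diagonal unary operations $x\mapsto f(x,\dots,x)$ attached to the (finitely many) basic operations $f$ become affine maps of the underlying group, and one seeks a common fixed point of this finite family, which would be a one-element subalgebra and so contradict semidegeneracy; alternatively one builds inside $\mathcal{V}$ a module-like algebra whose top congruence fails to be compact, again contradicting Kollár's criterion. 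Either route forces $B$ to be trivial, i.e. $\theta=\nabla_A$.

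\textbf{The main obstacle} is precisely this last interface: converting the commutator-theoretic abelianness of $B$ into a genuine violation of semidegeneracy. The guiding example is the variety of unital rings, where the constant $1$ satisfies $1\cdot 1=1$ and forces $R^{2}=R$, equivalently $[\nabla,\nabla]=\nabla$; semidegeneracy should be read as the abstract replacement for such ``unit-like'' constants that block abelian collapse. I would also keep Proposition 2.1 in reserve: rather than attacking $[\nabla_A,\nabla_A]=\nabla_A$ head-on, it suffices to verify either the Horn--Fraser property or the identity $[\theta,\nabla_A]=\theta$, so if the abelian reduction proves awkward one can instead try to show directly that a semidegenerate modular variety admits no skew congruences on finite products.
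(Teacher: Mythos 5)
The paper does not actually prove this proposition: it is quoted from Agliano \cite{Agliano} without proof, so your attempt can only be measured against what a complete argument requires. Your first step is correct and is the right reduction: applying (2.3) with $\alpha=\beta=\nabla_A$ and $\theta=[\nabla_A,\nabla_A]$ gives $[\nabla_B,\nabla_B]=\Delta_B$ for $B=A/\theta$, so the proposition is exactly equivalent to the assertion that a semidegenerate congruence modular variety contains no nontrivial abelian algebra. Up to that point the argument is sound.

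The second half, however, is a genuine gap, and the one concrete mechanism you propose cannot be repaired. A nontrivial abelian (indeed affine) algebra need not have any one-element subalgebra, so no fixed-point analysis carried out inside $B$ itself can succeed: consider the algebra with universe $\mathbb{Z}$ and basic operations $m(x,y,z)=x-y+z$ and $u(x)=x+1$. It is affine, hence abelian, it has a Mal'cev term, so it generates a congruence permutable (hence modular) variety, yet $u$ has no fixed point, so no singleton is a subalgebra. The one-element subalgebra that violates semidegeneracy must be located in a \emph{different} member of $\mathcal{V}$, and this is the missing idea. The standard completion runs as follows: since $B$ is abelian and $\mathcal{V}$ is congruence modular, the diagonal $D=\{(b,b)\mid b\in B\}$ is a single congruence class of some congruence $\Delta$ of $B\times B$ (Freese--McKenzie \cite{Fresee}; for affine $B$ one may take $\Delta=\{((a,b),(c,d))\mid a-b=c-d\}$). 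Then $C=(B\times B)/\Delta$ lies in $\mathcal{V}$; it is nontrivial, because for $a\neq b$ the pair $(a,b)$ lies outside the class $D$; and the image of the subalgebra $D$ under the canonical surjection is a one-element subalgebra of $C$. This contradicts semidegeneracy outright and finishes the proof. Your alternative route (exhibiting an algebra of $\mathcal{V}$ whose top congruence is not compact) can also be made to work, but only after invoking the affine representation of abelian algebras and explicitly building a non-finitely generated congruence; as written it is a hope rather than an argument, so the essential step of the proposition remains unproved in your proposal.
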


Let $A$ be a semidegenerate congruence  modular algebra. Therefore one can define on the complete lattice $Con(A)$ a residuation operation ( = implication) $\alpha \rightarrow \beta = \bigvee \{ \gamma|[\alpha,\gamma] \subseteq \beta\}$ and an annihilator operation ( = negation) $\alpha^{\bot } = \alpha^{\bot_A } =  \alpha \rightarrow \Delta_A =\bigvee \{\gamma |[\alpha,\gamma] = \Delta_A\}$. The implication $\rightarrow$ fulfills the usual residuation property: for all $\alpha,\beta,\gamma\in Con(A)$, $\alpha\subseteq \beta\rightarrow\gamma$ if and only if $[\alpha,\beta]\subseteq\gamma$. By using Propositions 2.1 and 2.2 we remark that $(Con(A), \lor, \land, [\cdot,\cdot], \rightarrow, \Delta_A, \nabla_A)$ is commutative and integral complete $l$ - groupoid (see \cite{Birkhoff}). In fact, $(Con(A),\lor,\cap,[\cdot,\cdot]_A,\Delta_A,\nabla_A)$ is a multiplicative - ideal structure (= mi - structure) in the sense of \cite{GeorgescuVoiculescu2}. Thus all the results contained in \cite{GeorgescuVoiculescu2} hold for the particular $mi$ - structure $Con(A)$.

For the rest of the section we fix an algebra $A$ in a semidegenerate congruence  modular variety $\mathcal{V}$.

\begin{lema}\cite{GM2}
 For all congruences $\alpha, \beta, \gamma $ the following hold:
\usecounter{nr}
\begin{list}{(\arabic{nr})}{\usecounter{nr}}
\item $\alpha\lor \beta = \nabla_A$ implies $[\alpha,\beta] = \alpha\cap \beta$;
\item $\alpha\lor \beta = \alpha\lor \gamma = \nabla_A$ implies $\alpha\lor [\beta,\gamma] = \alpha\lor (\beta\cap \gamma) = \nabla_A$;
\item $\alpha\lor \beta = \nabla_A$ implies $[\alpha,\alpha]^n\lor [\beta,\beta]^n = \nabla_A$, for all integers $n > 0$;
.
\end{list}
\end{lema}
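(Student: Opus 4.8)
The plan is to reduce all three statements to two structural facts about the commutator of a semidegenerate congruence modular algebra: that $\nabla_A$ is a neutral element, i.e.\ $[\theta,\nabla_A]=\theta$ for every $\theta\in Con(A)$, and that $[\cdot,\cdot]$ distributes over arbitrary joins. The neutral element law is not assumed directly but follows by combining Propositions 2.1 and 2.2: semidegeneracy yields $[\nabla_A,\nabla_A]=\nabla_A$, and by the equivalence of Proposition 2.1 this gives $[\theta,\nabla_A]=\theta$ for all $\theta$. Together with (2.1), namely $[\alpha,\beta]\subseteq\alpha\cap\beta$, and commutativity and monotonicity of the commutator, these are the only tools I expect to need.

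For (1) I would only prove the inclusion $\alpha\cap\beta\subseteq[\alpha,\beta]$, since the reverse inclusion is exactly (2.1). Using the neutral element law and the hypothesis $\alpha\lor\beta=\nabla_A$, I compute $\alpha\cap\beta=[\alpha\cap\beta,\nabla_A]=[\alpha\cap\beta,\alpha\lor\beta]$, and then distribute the commutator over the join to obtain $[\alpha\cap\beta,\alpha]\lor[\alpha\cap\beta,\beta]$. Each summand is bounded above by $[\alpha,\beta]$ because the commutator is increasing in each argument and $\alpha\cap\beta\subseteq\alpha,\beta$; hence $\alpha\cap\beta\subseteq[\alpha,\beta]$, and the two sides coincide.

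For (2) the key observation is that $\beta\cap\gamma$ gets squeezed once $\alpha\lor[\beta,\gamma]=\nabla_A$ is known, so it suffices to establish this last equality. I would expand $\nabla_A=[\nabla_A,\nabla_A]=[\alpha\lor\beta,\alpha\lor\gamma]$ by distributivity into $[\alpha,\alpha]\lor[\alpha,\gamma]\lor[\beta,\alpha]\lor[\beta,\gamma]$. By (2.1) the first three summands all lie below $\alpha$, so the whole join lies in $\alpha\lor[\beta,\gamma]$; therefore $\nabla_A\subseteq\alpha\lor[\beta,\gamma]\subseteq\alpha\lor(\beta\cap\gamma)\subseteq\nabla_A$, which forces equality throughout and gives both asserted identities.

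For (3) I would first isolate the principle $P$: if $x\lor y=\nabla_A$ then $[x,x]\lor[y,y]=\nabla_A$. This follows by applying (2) twice: with common congruence $y$ and the pair $(x,x)$, the hypothesis $y\lor x=\nabla_A$ gives $y\lor[x,x]=\nabla_A$; then with common congruence $[x,x]$ and the pair $(y,y)$, this gives $[x,x]\lor[y,y]=\nabla_A$. Reading the power as $\theta^{1}=\theta$ and $\theta^{n+1}=[\theta^{n},\theta^{n}]$, statement (3) is the base case $P$ applied to $\alpha,\beta$, followed by an induction: applying $P$ to $x=[\alpha,\alpha]^{n}$ and $y=[\beta,\beta]^{n}$ upgrades the inductive hypothesis $[\alpha,\alpha]^{n}\lor[\beta,\beta]^{n}=\nabla_A$ to $[\alpha,\alpha]^{n+1}\lor[\beta,\beta]^{n+1}=\nabla_A$. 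I expect no genuine obstacle in the calculations; the only real care is conceptual, namely recognizing that semidegeneracy is precisely what makes $\nabla_A$ neutral for the commutator (so that the whole argument gets off the ground), and fixing the intended meaning of the commutator power in (3).
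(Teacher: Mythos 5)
The paper gives no proof of this lemma at all --- it is quoted from \cite{GM2} --- so there is no internal argument to compare against; judged on its own, your proof is correct and is the standard one. The only delicate point is your derivation of the neutral law $[\theta,\nabla_A]=\theta$: Proposition 2.1 states its equivalences at the level of the whole variety, and since Proposition 2.2 gives $[\nabla_A,\nabla_A]=\nabla_A$ for \emph{every} algebra of the semidegenerate variety $\mathcal{V}$, condition (3) of Proposition 2.1 does apply to every $\theta\in Con(A)$; this is precisely the step the paper itself endorses when it remarks that $Con(A)$ is a commutative \emph{integral} $l$-groupoid. Your computations for (1) and (2) then use only this law, inclusion (2.1), commutativity, monotonicity and distributivity of the commutator over joins, and both chains of inclusions check out. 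Your reduction of (3) to the principle $P$ (two applications of (2)) followed by induction is also sound, and your reading of the iterated commutator agrees with the paper's convention $[\alpha,\beta]^1=[\alpha,\beta]$, $[\alpha,\beta]^{n+1}=[[\alpha,\beta]^n,[\alpha,\beta]^n]$, so the induction proves exactly the stated claim.
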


For all congruences $\alpha, \beta \in Con(A)$ and for any integer $n\geq 1$ we define by induction the congruence $[\alpha,\beta]^n$: $[\alpha,\beta]^1$ = $[\alpha,\beta]$ and $[\alpha,\beta]^{n+1} =[[\alpha,\beta]^n,[\alpha,\beta]^n]$. By convention, we set $[\alpha,\alpha]^0 = \alpha$.

\begin{lema}
If $\alpha, \beta, \theta\in Con(A)$ and $\theta\subseteq \alpha\cap \beta$ then we have $[\alpha/\theta,\beta/\theta]^n$ = $([\alpha,\beta]^n\lor \theta)/\theta$.
\end{lema}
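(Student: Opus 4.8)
The plan is to prove the identity $[\alpha/\theta,\beta/\theta]^n = ([\alpha,\beta]^n\vee\theta)/\theta$ by induction on $n\geq 1$, using the fundamental quotient formula (2.3) together with the hypothesis $\theta\subseteq\alpha\cap\beta$. The role of this hypothesis is to guarantee that joining $\theta$ absorbs cleanly: since $\theta\subseteq\alpha$ and $\theta\subseteq\beta$, we have $\alpha\vee\theta=\alpha$ and $\beta\vee\theta=\beta$, so that $(\alpha\vee\theta)/\theta=\alpha/\theta$ and likewise for $\beta$. This will let me rewrite the right-hand side of (2.3) purely in terms of $\alpha/\theta$ and $\beta/\theta$.

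For the base case $n=1$, I would simply invoke (2.3) directly with the two congruences $\alpha,\beta$:
\[
[\alpha/\theta,\beta/\theta] = [(\alpha\vee\theta)/\theta,(\beta\vee\theta)/\theta]_{A/\theta} = ([\alpha,\beta]\vee\theta)/\theta,
\]
where the first equality uses $\alpha\vee\theta=\alpha$ and $\beta\vee\theta=\beta$. This is exactly the claim for $n=1$.

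For the inductive step, I assume the identity holds for $n$, i.e. $[\alpha/\theta,\beta/\theta]^n=([\alpha,\beta]^n\vee\theta)/\theta$, and I want to deduce it for $n+1$. By the definition of the iterated commutator, $[\alpha/\theta,\beta/\theta]^{n+1}=\bigl[[\alpha/\theta,\beta/\theta]^n,[\alpha/\theta,\beta/\theta]^n\bigr]$. Substituting the inductive hypothesis, this equals $\bigl[([\alpha,\beta]^n\vee\theta)/\theta,\,([\alpha,\beta]^n\vee\theta)/\theta\bigr]_{A/\theta}$. Now I apply (2.3) again, this time to the congruence $\gamma:=[\alpha,\beta]^n$ in both slots: since $([\gamma,\gamma]\vee\theta)/\theta=[(\gamma\vee\theta)/\theta,(\gamma\vee\theta)/\theta]_{A/\theta}$, and by definition $[\gamma,\gamma]=[\alpha,\beta]^{n+1}$, I obtain $[\alpha/\theta,\beta/\theta]^{n+1}=([\alpha,\beta]^{n+1}\vee\theta)/\theta$, completing the induction.

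The only subtle point—and the step I would check most carefully—is the application of (2.3) in the inductive step. There the outer $\theta$ I add is the same $\theta$ already sitting inside $\gamma\vee\theta$; the identity (2.3) is stated as $([\gamma,\gamma]\vee\theta)/\theta=[(\gamma\vee\theta)/\theta,(\gamma\vee\theta)/\theta]_{A/\theta}$ for arbitrary congruences, so it applies verbatim with $\gamma=[\alpha,\beta]^n$ regardless of whether $\gamma$ already contains $\theta$. I do not even need $\theta\subseteq\gamma$ here; the formula (2.3) holds for all congruences, so the substitution is legitimate and no further simplification of $\gamma\vee\theta$ is required. Thus the hypothesis $\theta\subseteq\alpha\cap\beta$ is used only in the base case, to collapse $\alpha\vee\theta$ and $\beta\vee\theta$; the inductive step rides entirely on the general quotient law (2.3).
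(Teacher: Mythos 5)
Your proof is correct. The paper actually states this lemma without giving any proof, and your argument---induction on $n$, with the base case obtained from (2.3) after absorbing $\theta$ via $\alpha\lor\theta=\alpha$, $\beta\lor\theta=\beta$, and the inductive step obtained by applying (2.3) to $\gamma=[\alpha,\beta]^n$ in both slots---is precisely the routine verification the paper leaves to the reader. Your closing observation is also on point: (2.3) holds for arbitrary congruences, so no containment $\theta\subseteq\gamma$ is needed, and the hypothesis $\theta\subseteq\alpha\cap\beta$ enters only in the base case.
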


Following \cite{Fresee}, p.82 or \cite{Agliano}, p.582, a congruence $\phi\in Con(A)- \{\nabla_A \}$  is ${\emph{prime}}$ if for all $\alpha, \beta \in Con(A)$, $[\alpha,\beta] \subseteq \phi$ implies $\alpha \subseteq \phi$ or $\beta \subseteq \phi$. Let us introduce the following notations: $Spec(A)$ is the set of prime congruences and $Max(A)$ is the set of maximal elements of $Con(A)$. If $\theta \in Con(A)- \{\nabla_A \}$  then there exists $\phi \in Max(A)$ such that $\theta \subseteq \phi$ (because $\nabla_A$ is a compact congruence). By \cite{Agliano}, the following inclusion $Max(A) \subseteq Spec(A)$ holds. We shall denote by $Rad(A$) the intersection of all maximal congruences of $A$; $Rad(A)$ generalizes the notion of Jacobson radical of a ring.

According to \cite{Agliano}, p.582, the {\emph{radical}} $\rho(\theta)=\rho_A(\theta)$ of a congruence $\theta \in A$ is defined by $\rho_A(\theta)=\bigwedge \{\phi\in Spec(A)|\theta \subseteq \phi\}$; if $\theta=\rho(\theta)$ then $\theta$ is a radical congruence. We shall denote by $RCon(A)$ the set of radical congruences of $A$. The algebra $A$ is {\emph{semiprime}} if $\rho(\Delta_A)=\Delta_A$.

\begin{lema}
\cite{Agliano},\cite{GM2}
For all congruences $\alpha, \beta \in Con(A)$ the following hold:
\usecounter{nr}
\begin{list}{(\arabic{nr})}{\usecounter{nr}}
\item $\alpha \subseteq \rho(\alpha)$;
\item $\rho(\alpha \cap \beta)=\rho ([\alpha,\beta])=\rho(\alpha) \cap \rho(\beta)$;
\item $\rho(\alpha)= \nabla_A$ iff $\alpha = \nabla_A$;
\item $\rho(\alpha \lor \beta)=\rho(\rho(\alpha) \lor \rho(\beta))$;
\item $\rho(\rho(\alpha))=\rho(\alpha)$;
\item $\rho(\alpha) \lor \rho(\beta) = \nabla_A$ iff $\alpha \lor \beta = \nabla_A$;
\item $\rho([\alpha,\alpha]^n)=\rho(\alpha)$, for all integers $n \geq 0$.
\end{list}
\end{lema}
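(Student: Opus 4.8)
The plan is to prove the seven properties of the radical operation $\rho$ by reducing each to the definition $\rho(\theta)=\bigwedge\{\phi\in Spec(A)\mid\theta\subseteq\phi\}$ and exploiting the defining property of prime congruences together with the established facts about the commutator. First I would prove (1): since $\theta\subseteq\phi$ for every prime $\phi$ in the defining intersection, $\theta$ is a lower bound for that family, hence $\theta\subseteq\bigwedge\{\phi\in Spec(A)\mid\theta\subseteq\phi\}=\rho(\theta)$. This is immediate from the definition. For (5), $\rho(\rho(\alpha))=\rho(\alpha)$, I would observe that the primes above $\rho(\alpha)$ are exactly the primes above $\alpha$: one inclusion follows from (1) (if $\rho(\alpha)\subseteq\phi$ then $\alpha\subseteq\phi$), and conversely any prime $\phi\supseteq\alpha$ satisfies $\phi\supseteq\bigwedge\{\psi\in Spec(A)\mid\alpha\subseteq\psi\}=\rho(\alpha)$ since $\phi$ is itself a member of that family; equality of the two prime-families forces equality of the two intersections.

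Next I would turn to the multiplicative properties (2) and (7), which are the technical core. For (2), the key is the equality of three families of primes: $\{\phi\mid\alpha\cap\beta\subseteq\phi\}$, $\{\phi\mid[\alpha,\beta]\subseteq\phi\}$, and $\{\phi\mid\alpha\subseteq\phi\}\cup\{\phi\mid\beta\subseteq\phi\}$. The inclusion $[\alpha,\beta]\subseteq\alpha\cap\beta$ from (2.1) handles one direction, and the defining property of a prime congruence — $[\alpha,\beta]\subseteq\phi$ implies $\alpha\subseteq\phi$ or $\beta\subseteq\phi$ — is exactly what collapses the commutator-family onto the union. Intersecting over these coinciding families yields $\rho([\alpha,\beta])=\rho(\alpha\cap\beta)$ and, since $\bigwedge$ distributes appropriately over the union, $\rho(\alpha)\cap\rho(\beta)$. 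Property (7) then follows by induction on $n$: the base case $n=0$ is trivial, and the inductive step uses (2) with $\alpha=\beta=[\alpha,\alpha]^n$, giving $\rho([\alpha,\alpha]^{n+1})=\rho([[\alpha,\alpha]^n,[\alpha,\alpha]^n])=\rho([\alpha,\alpha]^n)=\rho(\alpha)$.

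For (4), I would show that $\rho(\alpha\lor\beta)$ and $\rho(\rho(\alpha)\lor\rho(\beta))$ arise from the same family of primes. A prime $\phi$ contains $\alpha\lor\beta$ iff it contains both $\alpha$ and $\beta$, iff (using (1) and the argument from (5)) it contains both $\rho(\alpha)$ and $\rho(\beta)$, iff it contains $\rho(\alpha)\lor\rho(\beta)$; intersecting over this common family gives the claim. Property (3) follows from the fact that $\nabla_A$ is the top element and is compact, so a congruence $\alpha\neq\nabla_A$ lies under some maximal (hence prime) congruence $\phi\neq\nabla_A$, forcing $\rho(\alpha)\subseteq\phi\neq\nabla_A$; the converse direction is trivial since $\rho(\nabla_A)$ is an empty or trivial intersection equal to $\nabla_A$. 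Property (6) is then a consequence of (4) combined with (3): $\rho(\alpha)\lor\rho(\beta)=\nabla_A$ iff $\rho(\rho(\alpha)\lor\rho(\beta))=\nabla_A$ by (3), which by (4) equals $\rho(\alpha\lor\beta)=\nabla_A$, equivalent to $\alpha\lor\beta=\nabla_A$ again by (3).

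The main obstacle I anticipate is property (2), specifically verifying that the prime-family argument legitimately yields the lattice-theoretic identity $\rho(\alpha)\cap\rho(\beta)$ rather than merely $\rho(\alpha\cap\beta)$; one must check that $\bigwedge(\mathcal{F}\cup\mathcal{G})=(\bigwedge\mathcal{F})\cap(\bigwedge\mathcal{G})$ for the two prime-families $\mathcal{F}=\{\phi\mid\alpha\subseteq\phi\}$ and $\mathcal{G}=\{\phi\mid\beta\subseteq\phi\}$, which holds in any complete lattice but deserves explicit mention. Everywhere the genuinely algebra-specific input is the prime congruence condition and the inclusion $[\alpha,\beta]\subseteq\alpha\cap\beta$; the remaining steps are formal manipulations valid in the complete lattice $Con(A)$.
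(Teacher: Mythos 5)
Your proof is correct. The paper states this lemma as a cited preliminary (from Agliano and Georgescu--Mure\c{s}an) without reproducing a proof, so there is no in-paper argument to compare against; your derivation is the standard one: each item reduces to identifying the relevant family of primes, with $[\alpha,\beta]\subseteq\alpha\cap\beta$ plus primality collapsing the three families in (2), the complete-lattice identity $\bigwedge(\mathcal{F}\cup\mathcal{G})=(\bigwedge\mathcal{F})\cap(\bigwedge\mathcal{G})$ giving the last equality there, the facts quoted in Section 2 (compactness of $\nabla_A$ and $Max(A)\subseteq Spec(A)$, both needing semidegeneracy) giving (3), and your bootstrapping of (6) and (7) from (2)--(4) being sound, including at the edge case of empty prime families where $\bigwedge\emptyset=\nabla_A$.
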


Recall that for an arbitrary family $(\alpha_i)_{i\in I}$ of congruences in $A$, the following equality holds: $\rho(\displaystyle \bigvee_{i \in I}\alpha_i)=\rho(\bigvee_{i \in I} \rho(\alpha_i))$. Then one can introduce the arbitrary joins in $RCon(A)$: if $(\alpha_i)_{i\in I} \subseteq RCon(A)$ then we denote $\displaystyle \bigvee_{i \in I}^{\cdot} \alpha_i=\rho(\bigvee_{i \in I}\alpha_i)$. Thus it is easy to prove that $(RCon(A), \displaystyle \bigvee^{\cdot}, \cap, \rho(\Delta_A), \nabla_A)$ is a frame (see \cite{Johnstone} as a basic text for the frame theory).

\begin{propozitie}\cite{Agliano}
Assume that $K(A)$ is closed under the commutator operation $[\cdot,\cdot]$. For any congruence $\theta$ of $A$ the following equality holds:

$\rho(\theta) = \bigvee\{\alpha\in K(A)|[\alpha,\alpha]^n\subseteq \theta$, for some  $n\geq 0\}$.
\end{propozitie}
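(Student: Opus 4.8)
The plan is to prove the two inclusions separately. Write $S=\{\alpha\in K(A)\mid [\alpha,\alpha]^n\subseteq\theta \text{ for some } n\geq 0\}$ and $\sigma=\bigvee S$; the goal is $\rho(\theta)=\sigma$. For the inclusion $\sigma\subseteq\rho(\theta)$, I would argue elementwise: if $\alpha\in S$, say $[\alpha,\alpha]^n\subseteq\theta$, then $\rho$ is monotone (immediate from its definition as a meet of primes), so $\rho([\alpha,\alpha]^n)\subseteq\rho(\theta)$; by Lemma 2.5(7), $\rho([\alpha,\alpha]^n)=\rho(\alpha)$, and by Lemma 2.5(1), $\alpha\subseteq\rho(\alpha)$. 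Hence $\alpha\subseteq\rho(\theta)$ for every $\alpha\in S$, and taking the join gives $\sigma\subseteq\rho(\theta)$.

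The reverse inclusion $\rho(\theta)\subseteq\sigma$ is the substantial part. Since $Con(A)$ is algebraic, $\rho(\theta)=\bigvee\{\gamma\in K(A)\mid\gamma\subseteq\rho(\theta)\}$, so it suffices to show that every compact $\gamma\subseteq\rho(\theta)$ already lies in $S$. I would prove the contrapositive: assuming $[\gamma,\gamma]^n\not\subseteq\theta$ for all $n\geq 0$, I produce a prime congruence $\phi$ with $\theta\subseteq\phi$ and $\gamma\not\subseteq\phi$, which forces $\gamma\not\subseteq\rho(\theta)$. First note that the sequence $([\gamma,\gamma]^n)_{n\geq 0}$ is decreasing (by (2.1), $[\gamma,\gamma]^{n+1}=[[\gamma,\gamma]^n,[\gamma,\gamma]^n]\subseteq[\gamma,\gamma]^n$) and, crucially, consists of \emph{compact} congruences, because $\gamma\in K(A)$ and $K(A)$ is closed under the commutator by hypothesis. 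Consider the family $\mathcal{F}$ of congruences $\psi$ with $\theta\subseteq\psi$ and $[\gamma,\gamma]^n\not\subseteq\psi$ for all $n$. Then $\theta\in\mathcal{F}$, and $\mathcal{F}$ is closed under unions of chains: if $(\psi_j)$ is a chain and $[\gamma,\gamma]^n\subseteq\bigvee_j\psi_j$ for some $n$, compactness of $[\gamma,\gamma]^n$ gives $[\gamma,\gamma]^n\subseteq\psi_j$ for a single $j$, a contradiction. By Zorn's Lemma, $\mathcal{F}$ has a maximal element $\phi$.

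It remains to check that $\phi$ is prime, which is where the hypothesis on $K(A)$ and the bilinearity of the commutator combine. Suppose $[\alpha,\beta]\subseteq\phi$ with $\alpha\not\subseteq\phi$ and $\beta\not\subseteq\phi$. Then $\phi\lor\alpha$ and $\phi\lor\beta$ properly contain $\phi$, so by maximality neither lies in $\mathcal{F}$; hence there are $m,k$ with $[\gamma,\gamma]^m\subseteq\phi\lor\alpha$ and $[\gamma,\gamma]^k\subseteq\phi\lor\beta$. Putting $N=\max(m,k)$ and using that the sequence is decreasing, both inclusions hold with exponent $N$, so $[\gamma,\gamma]^{N+1}=[[\gamma,\gamma]^N,[\gamma,\gamma]^N]\subseteq[\phi\lor\alpha,\phi\lor\beta]$. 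Expanding the right-hand side by distributivity of the commutator over joins yields $[\phi,\phi]\lor[\phi,\beta]\lor[\alpha,\phi]\lor[\alpha,\beta]$, and each summand is contained in $\phi$ (the first three by (2.1), the last by assumption). Thus $[\gamma,\gamma]^{N+1}\subseteq\phi$, contradicting $\phi\in\mathcal{F}$. Therefore $\phi$ is prime with $\theta\subseteq\phi$ and $\gamma=[\gamma,\gamma]^0\not\subseteq\phi$, so $\gamma\not\subseteq\rho(\theta)$, completing the contrapositive. The main obstacle is precisely this prime-construction step: the argument hinges on the $[\gamma,\gamma]^n$ being compact so that Zorn applies and maximality upgrades to primality, and this is exactly what the assumption that $K(A)$ is closed under the commutator guarantees.
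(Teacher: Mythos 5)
Your proof is correct. Note that the paper itself gives no proof of this proposition --- it is quoted from Agliano's paper \cite{Agliano} --- so there is nothing internal to compare against; your argument is the standard one that underlies the cited result. Both halves check out against the paper's toolkit: the easy inclusion uses exactly Lemma 2.5(1) and 2.5(7), and the hard inclusion is the classical Zorn argument (a congruence maximal among those containing $\theta$ and avoiding the decreasing sequence $([\gamma,\gamma]^n)_{n\geq 0}$ is prime, via distributivity of the commutator over joins and property (2.1)), with the hypothesis that $K(A)$ is closed under the commutator invoked precisely where it is needed, namely to make each $[\gamma,\gamma]^n$ compact so that the union of a chain avoiding them still avoids them.
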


In particular, we have $\rho(\Delta_A) = \bigvee\{\alpha\in K(A)|[\alpha,\alpha]^n = \Delta_A$, for some  $n\geq 0\}$.

Then the algebra $A$ is semiprime if and only if for any $\alpha\in K(A)$ and for any integer $n\geq 0$, $[\alpha,\alpha]^n = \Delta_A$ implies $\alpha = \Delta_A$.
.
Let $u:A\rightarrow B$ be an arbitrary morphism in $\mathcal{V}$ and $u^*:Con(B)\rightarrow Con(A)$, $u^{\bullet}:Con(A)\rightarrow Con(B)$ are the maps defined by $u^*(\beta) = u^{-1}(\beta)$ and $u^{\bullet}(\alpha) = Cg_B(f(\alpha))$, for all $\alpha\in Con(A)$ and $\beta\in Con(B)$. Thus $u^{\bullet}$ is the left adjoint of $u^*$: for all $\alpha\in Con(A)$, $\beta\in Con(B)$, we have $u^{\bullet}(\alpha)\subseteq \beta$ iff  $\alpha\subseteq u^*(\beta)$.

For any $\theta \in Con(A)$ we denote $V_A(\theta) = V(\theta) = \{\phi \in Spec(A)|\theta\subseteq \phi\}$ and $D_A(\theta) = D(\theta) = Spec(A)- V(\theta)$. If $\alpha, \beta \in Con(A)$ then $D(\alpha)\cap D(\beta) = D([\alpha,\beta])$ and $V(\alpha)\cup V(\beta) = V([\alpha,\beta])$. For any family of congruences $(\theta_i)_{i\in I}$ we have $\bigcup_{i\in I}D(\theta_i) = D(\bigvee_{i\in I}\theta_i)$ and $\bigcap_{i\in I}V(\theta_i) = V(\bigvee_{i\in I}\theta_i)$. Thus $Spec(A)$ becomes a topological space whose open sets are $D(\theta),\theta\in Con(A)$. We remark that this topology extends the Zariski topology (defined on the prime spectra of commutative rings) and the Stone topology (defined on the prime spectra of bounded distributive lattices). The properties of $Spec(A)$ were intensively studied by Agliano in \cite{Agliano} (for this reason we shall call $Spec(A)$ the Agliano spectrum of the algebra $A$).

 We mention that the family $(D(\alpha))_{\alpha\in K(A)}$ is a basis of open sets for the topology of $Spec(A)$. We remark that the set $Max(A)$ of maximal congruences of $A$ can be considered as a subspace of $Spec(A)$.

\section{Reticulation of a universal algebra}

\hspace{0.5cm} The reticulation of a ring $R$ is a bounded distributive lattice $L(R)$ whose prime spectrum (with the Stone topology \cite{BalbesDwinger}) is homeomorphic with the prime spectrum of $R$ (with the Zariski topology \cite{Atiyah}). This notion was generalized in \cite{GM2} to a universal algebra framework, then it was used to study remarkable classes of universal algebras \cite{GG}. In this section we shall remind the principal properties of the reticulation of a universal algebra (cf. \cite{GM2}).

Let us fix a semidegenerate congruence  modular variety $\mathcal{V}$ and $A$ an algebra of $\mathcal{V}$ such that the set $K(A)$ of compact congruences of $A$ is closed under the commutator operation. Consider  the following equivalence relation on $Con(A)$: for all $\alpha, \beta\in Con(A)$, $\alpha\equiv \beta$ if and only if $\rho(\alpha) = \rho(\beta)$. Let $\hat \alpha$ be the equivalence class of $\alpha\in Con(A)$ and $0 = \hat{\Delta_A}, 1 = \hat{\nabla_A}$. Then $\equiv$ is a congruence of the lattice $Con(A)$ so the quotient set $L(A)$ = $K(A)/{\equiv}$ is a bounded distributive lattice, named the reticulation of the algebra $A$ (see \cite{GM2}). We shall denote by $\lambda_A:K(A)\rightarrow L(A)$ the function defined by $\lambda_A(\alpha) = \hat{\alpha}$, for all $\alpha\in K(A)$.

We remark that for all $\alpha,\beta\in K(A)$ we have $\lambda_A(\alpha) = \lambda_A(\beta)$ if and only if $\rho(\alpha) = \rho(\beta)$.

\begin{lema}\cite{GM2}
 For all congruences $\alpha, \beta \in K(A)$ the following hold:
\usecounter{nr}
\begin{list}{(\arabic{nr})}{\usecounter{nr}}
\item $\lambda_A(\alpha \lor \beta) = \lambda_A(\alpha)\lor \lambda_A(\beta)$;
\item $\lambda_A(\alpha \cap \beta)$ = $\lambda_A([\alpha,\beta])$ = $\lambda_A(\alpha)\land \lambda_A(\beta)$;
\item $\lambda_A(\alpha) = 1$ iff $\alpha = \nabla_A$;
\item $\lambda_A(\alpha) = 0$ iff $[\alpha,\alpha]^k = \Delta_A$, for some integer $k\geq 1$;
\item $\lambda_A([\alpha,\alpha]^k) = \lambda_A(\alpha)$, for all integers $k\geq 1$;
\item $\lambda_A(\alpha) = 0$ iff $\alpha\subseteq \rho(\Delta_A)$;
\item If $A$ is semiprime then $\lambda_A(\alpha) = 0$ iff $\alpha = \Delta_A$;
\item  $\lambda_A(\alpha)\leq \lambda_A(\beta)$ iff $\rho(\alpha)\subseteq\rho(\beta)$ iff $[\alpha,\alpha]^n \subseteq \beta$, for some integer $n\geq 1$.
\end{list}
\end{lema}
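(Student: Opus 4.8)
The plan is to obtain items (1), (2), (5) directly from the construction of $L(A)$ as a quotient lattice, to read off (3) from the radical identities, and to concentrate the genuine work on item (8), from which (4), (6) and (7) will follow as special cases. First I would verify that $\equiv$ is a lattice congruence on $Con(A)$ and identify the induced operations. If $\rho(\alpha)=\rho(\alpha')$ and $\rho(\beta)=\rho(\beta')$, then Lemma 2.5(2) gives $\rho(\alpha\cap\beta)=\rho(\alpha)\cap\rho(\beta)=\rho(\alpha'\cap\beta')$ and Lemma 2.5(4) gives $\rho(\alpha\lor\beta)=\rho(\rho(\alpha)\lor\rho(\beta))=\rho(\alpha'\lor\beta')$. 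Hence the join in $L(A)$ is $\lambda_A(\alpha)\lor\lambda_A(\beta)=\lambda_A(\alpha\lor\beta)$, which is item (1); and since $K(A)$ is closed under the commutator (though perhaps not under $\cap$), the meet is realized by $\lambda_A(\alpha)\land\lambda_A(\beta)=\lambda_A([\alpha,\beta])$, while the first equality of item (2) is exactly $\rho(\alpha\cap\beta)=\rho([\alpha,\beta])$ from Lemma 2.5(2). Item (3) is the translation of Lemma 2.5(3) through $\lambda_A(\alpha)=1\Leftrightarrow\rho(\alpha)=\nabla_A$, and item (5) is immediate from $\rho([\alpha,\alpha]^k)=\rho(\alpha)$ (Lemma 2.5(7)).

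Next I would prove item (8). For the first equivalence, $\lambda_A(\alpha)\leq\lambda_A(\beta)$ means $\lambda_A(\alpha\lor\beta)=\lambda_A(\beta)$, i.e. $\rho(\alpha\lor\beta)=\rho(\beta)$; since $\rho(\alpha\lor\beta)=\rho(\rho(\alpha)\lor\rho(\beta))$ and $\beta\subseteq\alpha\lor\beta$, this is equivalent to $\rho(\alpha)\subseteq\rho(\beta)$. In the second equivalence, the implication $[\alpha,\alpha]^n\subseteq\beta\Rightarrow\rho(\alpha)\subseteq\rho(\beta)$ is immediate from $\rho(\alpha)=\rho([\alpha,\alpha]^n)$. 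The converse is the crux: assuming $\rho(\alpha)\subseteq\rho(\beta)$, Proposition 2.6 gives $\alpha\subseteq\rho(\beta)=\bigvee\{\gamma\in K(A)\mid[\gamma,\gamma]^m\subseteq\beta\text{ for some }m\}$, and compactness of $\alpha$ yields $\alpha\subseteq\gamma_1\lor\cdots\lor\gamma_k$ with $[\gamma_i,\gamma_i]^{m_i}\subseteq\beta$.

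The main obstacle is therefore the following nilpotency lemma: if $[\gamma,\gamma]^p\subseteq\beta$ and $[\delta,\delta]^q\subseteq\beta$, then $[\gamma\lor\delta,\gamma\lor\delta]^N\subseteq\beta$ for some $N$ (the case of $k$ joinands following by induction, after which $[\alpha,\alpha]^N\subseteq\beta$). I would first reduce to $\beta=\Delta_A$ by passing to $A/\beta$: from (2.3), together with $[\gamma,\beta]\subseteq\beta$ and $[\beta,\beta]\subseteq\beta$, one computes $[(\gamma\lor\beta)/\beta,(\gamma\lor\beta)/\beta]^p=([\gamma,\gamma]^p\lor\beta)/\beta=\Delta_{A/\beta}$, and likewise for $\delta$, so it suffices to show that $[\gamma,\gamma]^p=\Delta_A$ and $[\delta,\delta]^q=\Delta_A$ force $[\epsilon,\epsilon]^N=\Delta_A$ for $\epsilon=\gamma\lor\delta$. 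Working in $A/\delta$ and in $A/\gamma$ via Lemma 2.4 (absorbing the cross terms $[\gamma,\delta]$, $[\delta,\delta]$, $[\gamma,\gamma]$ that lie in $\delta$ resp. $\gamma$), I would obtain $[\epsilon,\epsilon]^p\subseteq\delta$ and $[\epsilon,\epsilon]^q\subseteq\gamma$, whence $[\epsilon,\epsilon]^r\subseteq\gamma\cap\delta$ for $r=\max(p,q)$; then $[\epsilon,\epsilon]^{r+1}\subseteq[\gamma\cap\delta,\gamma\cap\delta]\subseteq[\gamma,\delta]=:\eta$, and since $\eta\subseteq\gamma$ forces $[\eta,\eta]^p\subseteq[\gamma,\gamma]^p=\Delta_A$, the monotonicity inclusion $[\epsilon,\epsilon]^{r+1+j}\subseteq[\eta,\eta]^j$ gives $[\epsilon,\epsilon]^{r+1+p}=\Delta_A$. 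The delicate points here are purely the commutator/quotient bookkeeping, all routine from (2.1)--(2.3) and Lemma 2.4.

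Finally, items (4), (6), (7) are read off from (8). Taking $\beta=\Delta_A$ gives item (4), since $\lambda_A(\alpha)=0$ iff $\lambda_A(\alpha)\leq\lambda_A(\Delta_A)$ iff $[\alpha,\alpha]^n\subseteq\Delta_A$, i.e. $[\alpha,\alpha]^n=\Delta_A$. For item (6), $\lambda_A(\alpha)=0$ iff $\rho(\alpha)\subseteq\rho(\Delta_A)$; using $\alpha\subseteq\rho(\alpha)$ and $\rho(\rho(\Delta_A))=\rho(\Delta_A)$ (Lemma 2.5(1),(5)) this is equivalent to $\alpha\subseteq\rho(\Delta_A)$. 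Item (7) is the specialization of (6) to a semiprime algebra, where $\rho(\Delta_A)=\Delta_A$, so that $\lambda_A(\alpha)=0$ reduces to $\alpha\subseteq\Delta_A$, i.e. $\alpha=\Delta_A$.
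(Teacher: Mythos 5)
Your proposal is correct, but a comparison with ``the paper's own proof'' is moot here: the paper does not prove Lemma 3.1 at all --- it is quoted from \cite{GM2} as part of the reticulation construction. What you have produced is a self-contained derivation from the Section 2 toolkit, and it checks out. Items (1), (2), (3), (5) are indeed just well-definedness of the induced operations plus the radical identities of Lemma 2.5 (your observation that the meet must be realized by $\lambda_A([\alpha,\beta])$, since $K(A)$ need not be closed under $\cap$ but is closed under the commutator, is exactly the right point). The real content is your nilpotency lemma for item (8): from $\rho(\alpha)\subseteq\rho(\beta)$, Proposition 2.6 and compactness of $\alpha$ give $\alpha\subseteq\gamma_1\lor\cdots\lor\gamma_k$ with each $[\gamma_i,\gamma_i]^{m_i}\subseteq\beta$, and one must see that a join of two ``$\beta$-nilpotent'' congruences is $\beta$-nilpotent. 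Your chain is sound: distributivity of the commutator over joins and $[\gamma,\delta]\subseteq\gamma\cap\delta$ give inductively $[\epsilon,\epsilon]^n\subseteq[\gamma,\gamma]^n\lor\delta$ and $[\epsilon,\epsilon]^n\subseteq[\delta,\delta]^n\lor\gamma$ for $\epsilon=\gamma\lor\delta$, hence $[\epsilon,\epsilon]^r\subseteq\gamma\cap\delta$ (using that the sequence $[\epsilon,\epsilon]^n$ is decreasing), then $[\epsilon,\epsilon]^{r+1}\subseteq[\gamma,\delta]=\eta\subseteq\gamma$ and $[\epsilon,\epsilon]^{r+1+p}\subseteq[\eta,\eta]^p\subseteq[\gamma,\gamma]^p\subseteq\beta$; monotonicity of $[\cdot,\cdot]$ then transfers this from the join $\gamma_1\lor\cdots\lor\gamma_k$ down to $\alpha$. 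Deducing (4), (6), (7) by specializing (8) to $\beta=\Delta_A$ (together with monotonicity and idempotence of $\rho$, and $\rho(\Delta_A)=\Delta_A$ in the semiprime case) is a clean logical economy. What your route buys, compared with the paper's bare citation, is a proof entirely internal to the present paper's stated facts (2.1)--(2.3), Lemmas 2.3--2.5, Proposition 2.6; the only caution is that several of your ``routine bookkeeping'' steps (the cross-term absorption $[\gamma\lor\beta,\gamma\lor\beta]^p\lor\beta=[\gamma,\gamma]^p\lor\beta$, and the inclusions $[\epsilon,\epsilon]^{r+1+j}\subseteq[\eta,\eta]^j$) genuinely require the small inductions you indicate, so in a written-out version they should be displayed rather than waved at.
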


Let $L$ be a bounded distributive lattice and $Id(L)$ the set of its ideals. Then $Spec_{Id}(L)$ will denote the set of prime ideals in $L$ and $Max_{Id}(L)$ the set of maximal ideals in $L$. For any ideal $I$ of $L$ we denote $D_{Id}(I) = \{Q\in Spec_{Id}(L)|I\not\subseteq Q\}$ and $V_{Id}(I) = \{Q\in Spec_{Id}(L)|I\subseteq Q\}$. If $x\in L$ then we use the notation $D_{Id}(x) = D_{Id}((x])  = \{Q\in Spec_{Id}(L)|x\notin Q\}$ and $V_{Id}(x) = V_{Id}((x])  = \{Q\in Spec_{Id}(L)|x\in Q\}$, where $(x]$ is the principal ideal of $L$ generated by the set $\{x\}$. Recall from \cite{Johnstone} that the family $(D_{Id}(x))_{x\in L}$ is a basis of open sets for the Stone topology on $Spec_{Id}(L)$.

For all $\theta\in Con(A)$ and $I\in Id(L(A))$ we shall denote

$\theta^{\ast} = \{\lambda_A(\alpha)|\alpha\in K(A), \alpha\subseteq \theta \}$ and $I_{\ast} =\bigvee\{\alpha\in K(A)|\lambda_A(\alpha)\in I\}$.

Thus $\theta^{\ast}$ is an ideal of the lattice $L(A)$ and $I_{\ast}$ is a congruence of $A$. In this way one obtains two order - preserving functions $(\cdot)^{\ast}:Con(A)\rightarrow Id(L(A))$ and $(\cdot)_{\ast}:Id(L(A))\rightarrow Con(A)$.

The functions $(\cdot)^{\ast}$ and $(\cdot)_{\ast}$ will play an important role in proving the transfer properties of reticulation. The following four results constitute the first steps in obtaining transfer properties. They will be used many times in the proofs.

\begin{lema}\cite{GG}
The following assertions hold
\usecounter{nr}
\begin{list}{(\arabic{nr})}{\usecounter{nr}}
\item If $\theta,\chi\in Con(A)$ then $[\theta,\chi]^{\ast}$ = $(\theta\cap \chi)^{\ast}$ = $\theta^{\ast}\bigcap \chi^{\ast}$;
\item If $(\theta_i)_{i\in I}$ is a family of congruences of $A$ then $(\bigvee_{i\in I}\theta_i)^{\ast} = \bigvee_{i\in I}\theta_i^{\ast}$.

\end{list}
\end{lema}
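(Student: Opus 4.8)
The plan is to exploit two facts repeatedly: that $(\cdot)^{\ast}$ is order-preserving, and that $\lambda_A$ carries joins to joins and commutators to meets (Lemma 3.1(1),(2)), together with the standing hypothesis that $K(A)$ is closed under the commutator (and under finite joins, with $\Delta_A\in K(A)$). As a warm-up I would recall why each $\theta^{\ast}$ is genuinely an ideal of $L(A)$, since the same device drives both parts: closure under joins is immediate from Lemma 3.1(1), while downward closure follows by the commutator trick — if $x\le\lambda_A(\alpha)$ with $\alpha\in K(A)$, $\alpha\subseteq\theta$, and $x=\lambda_A(\delta)$ for some $\delta\in K(A)$ (using surjectivity of $\lambda_A$), then $[\delta,\alpha]\in K(A)$ satisfies $[\delta,\alpha]\subseteq\alpha\subseteq\theta$ and $\lambda_A([\delta,\alpha])=\lambda_A(\delta)\wedge\lambda_A(\alpha)=x\wedge\lambda_A(\alpha)=x$.

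For (1), the inclusion $[\theta,\chi]\subseteq\theta\cap\chi$ from (2.1) gives $[\theta,\chi]^{\ast}\subseteq(\theta\cap\chi)^{\ast}$ by monotonicity, and $(\theta\cap\chi)^{\ast}\subseteq\theta^{\ast}\cap\chi^{\ast}$ is immediate because $\alpha\subseteq\theta\cap\chi$ forces $\alpha\subseteq\theta$ and $\alpha\subseteq\chi$. It then remains to close the cycle with $\theta^{\ast}\cap\chi^{\ast}\subseteq[\theta,\chi]^{\ast}$. Given $x\in\theta^{\ast}\cap\chi^{\ast}$, I would choose, directly from the definition, congruences $\alpha,\beta\in K(A)$ with $\alpha\subseteq\theta$, $\beta\subseteq\chi$ and $\lambda_A(\alpha)=\lambda_A(\beta)=x$; then $[\alpha,\beta]\in K(A)$ lies in $[\theta,\chi]$ by monotonicity of the commutator and satisfies $\lambda_A([\alpha,\beta])=\lambda_A(\alpha)\wedge\lambda_A(\beta)=x$ by Lemma 3.1(2), so $x\in[\theta,\chi]^{\ast}$. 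The chain $[\theta,\chi]^{\ast}\subseteq(\theta\cap\chi)^{\ast}\subseteq\theta^{\ast}\cap\chi^{\ast}\subseteq[\theta,\chi]^{\ast}$ forces the three ideals to coincide.

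For (2), monotonicity of $(\cdot)^{\ast}$ gives $\theta_j^{\ast}\subseteq(\bigvee_{i}\theta_i)^{\ast}$ for every $j$, hence $\bigvee_{i}\theta_i^{\ast}\subseteq(\bigvee_{i}\theta_i)^{\ast}$ since the right-hand side is an ideal containing each $\theta_i^{\ast}$. For the reverse inclusion I would take $x=\lambda_A(\alpha)\in(\bigvee_{i}\theta_i)^{\ast}$ with $\alpha\in K(A)$ and $\alpha\subseteq\bigvee_{i}\theta_i$, and use that $Con(A)$ is algebraic: each $\theta_i$ is the join of the compact congruences below it, so $\bigvee_{i}\theta_i=\bigvee\{\gamma\in K(A)\mid\gamma\subseteq\theta_i\text{ for some }i\}$, and compactness of $\alpha$ yields finitely many $\gamma_1,\dots,\gamma_m\in K(A)$ with $\gamma_k\subseteq\theta_{i_k}$ and $\alpha\subseteq\gamma_1\vee\cdots\vee\gamma_m$. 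Applying $\lambda_A$ and Lemma 3.1(1) gives $x=\lambda_A(\alpha)\le\lambda_A(\gamma_1)\vee\cdots\vee\lambda_A(\gamma_m)$, a finite join of elements lying respectively in $\theta_{i_1}^{\ast},\dots,\theta_{i_m}^{\ast}$, whence $x\in\bigvee_{i}\theta_i^{\ast}$ because this ideal join is downward closed.

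I expect the main obstacle to be the reverse inclusion in (2): the congruences $\theta_i$ are not compact, so one cannot apply compactness of $\alpha$ directly to the family $(\theta_i)_{i\in I}$. The essential step is first to refine $\bigvee_{i}\theta_i$ into a join of compact congruences, each sitting below some $\theta_i$, and only then to invoke compactness of $\alpha$ in the algebraic lattice $Con(A)$. In (1) the one nontrivial move — producing a single compact congruence sitting below $\theta$, $\chi$ and $[\theta,\chi]$ simultaneously while preserving the class $x$ — is exactly what closure of $K(A)$ under the commutator delivers; these two uses of commutator-closedness are the crux of the lemma.
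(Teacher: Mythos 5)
Your proof is correct. Note that the paper itself gives no proof of this lemma: it is quoted as Lemma 3.2 from the manuscript \cite{GG}, so there is no internal argument to compare against. Your argument is the natural one suggested by the surrounding material, and it uses exactly the right ingredients: in part (1), closure of $K(A)$ under the commutator lets you merge the two representatives $\alpha\subseteq\theta$, $\beta\subseteq\chi$ of a common class $x$ into the single compact congruence $[\alpha,\beta]\subseteq[\theta,\chi]$ with $\lambda_A([\alpha,\beta])=x$ (Lemma 3.1(2)); in part (2), you correctly identify and repair the only delicate point, namely that compactness of $\alpha$ must be applied after rewriting $\bigvee_i\theta_i$ as a join of compact congruences each lying under some $\theta_i$, which is legitimate because $Con(A)$ is algebraic. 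The one step you use tacitly --- monotonicity of $\lambda_A$, needed to pass from $\alpha\subseteq\gamma_1\vee\cdots\vee\gamma_m$ to $\lambda_A(\alpha)\le\lambda_A(\gamma_1)\vee\cdots\vee\lambda_A(\gamma_m)$ --- follows at once from Lemma 3.1(1) (or 3.1(8)), so it is not a gap.
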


\begin{lema}
\cite{GM2} For all $\theta\in Con(A)$, $\alpha \in K(A)$ and $I\in Id(L(A))$ the following hold:
\usecounter{nr}
\begin{list}{(\arabic{nr})}{\usecounter{nr}}
\item $\alpha\subseteq I_{\ast}$ iff $\lambda_A(\alpha)\in I$;
\item $(\theta^{\ast})_{\ast} = \rho(\theta)$ and $(I_{\ast})^{\ast} = I$;
\item $\theta^{\ast} = (\rho(\theta))^{\ast}$ and $\rho(I_{\ast}) = I_{\ast}$;
\item If $\theta\in Spec(A)$ then $(\theta^{\ast})_{\ast} = \theta$ and $\theta^{\ast}\in Spec_{Id}(L(A))$;
\item If $I\in Spec_{Id}(L(A))$ then $I_{\ast}\in Spec(A)$;
\item If $\theta\in Spec(A)$ then $\alpha\subseteq \theta$ if and only if $\lambda_A(\alpha)\in \theta^{\ast}$;
\item If $\alpha\in K(A)$ then $\alpha^{\ast}$ is exactly the principal ideal $(\lambda_A(\alpha)]$ of the lattice $L(A)$.
\end{list}
\end{lema}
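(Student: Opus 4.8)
The plan is to read all seven statements as ``transfer'' (Galois-type) facts relating the two order-preserving maps $(\cdot)^\ast$ and $(\cdot)_\ast$, and to reduce each of them to the compact case, where Lemma 3.1 and Proposition 2.6 do the real work. The single recurring device is the following ``lowering trick'': if $\gamma\in K(A)$ and $\lambda_A(\gamma)\le\lambda_A(\alpha)$, then by Lemma 3.1(8) we have $[\gamma,\gamma]^n\subseteq\alpha$ for some $n$, while $[\gamma,\gamma]^n\in K(A)$ satisfies $\lambda_A([\gamma,\gamma]^n)=\lambda_A(\gamma)$ by Lemma 3.1(5); thus one may always replace a compact congruence by one lying below a prescribed congruence without changing its $\lambda_A$-image. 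I expect this device to be the backbone of almost every item.

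I would prove the statements roughly in the order (1), (7), (2), (3), (6), (4), (5). Item (1) is immediate: if $\lambda_A(\alpha)\in I$ then $\alpha$ is one of the joinands defining $I_\ast$, so $\alpha\subseteq I_\ast$; conversely, if $\alpha\subseteq I_\ast$ then compactness of $\alpha$ gives $\alpha\subseteq\beta_1\lor\cdots\lor\beta_k$ with each $\lambda_A(\beta_i)\in I$, whence $\lambda_A(\alpha)\le\bigvee_i\lambda_A(\beta_i)\in I$ by Lemma 3.1(1) and, $I$ being an ideal, $\lambda_A(\alpha)\in I$. Item (7) is the lowering trick applied directly: $\alpha^\ast\subseteq(\lambda_A(\alpha)]$ by monotonicity, and any $y=\lambda_A(\gamma)\le\lambda_A(\alpha)$ equals $\lambda_A([\gamma,\gamma]^n)$ with $[\gamma,\gamma]^n\subseteq\alpha$, so $y\in\alpha^\ast$.

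For (2), the identity $(I_\ast)^\ast=I$ follows from (1) together with surjectivity of $\lambda_A$ onto $L(A)$. The identity $(\theta^\ast)_\ast=\rho(\theta)$ is the heart of the lemma: using the lowering trick one checks that $\lambda_A(\beta)\in\theta^\ast$ iff $[\beta,\beta]^n\subseteq\theta$ for some $n\ge 0$, so $(\theta^\ast)_\ast=\bigvee\{\beta\in K(A):[\beta,\beta]^n\subseteq\theta \text{ for some } n\ge 0\}$, which is exactly $\rho(\theta)$ by Proposition 2.6. Item (3) then drops out formally by composing the two maps: $(\rho(\theta))^\ast=((\theta^\ast)_\ast)^\ast=\theta^\ast$ and $\rho(I_\ast)=((I_\ast)^\ast)_\ast=I_\ast$. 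For (6), the implication $\alpha\subseteq\theta\Rightarrow\lambda_A(\alpha)\in\theta^\ast$ is definitional, and for the converse I would note that $\lambda_A(\alpha)\in\theta^\ast$ forces $[\alpha,\alpha]^n\subseteq\theta$, and then either invoke $\rho([\alpha,\alpha]^n)=\rho(\alpha)$ (Lemma 2.5(7)) with $\rho(\theta)=\theta$, or strip the commutator powers one at a time using primeness of $\theta$, to conclude $\alpha\subseteq\theta$.

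The two genuinely ``spectral'' items are (4) and (5), where the main obstacles lie. For (4), $(\theta^\ast)_\ast=\theta$ is (2) combined with the fact that a prime congruence is radical ($\rho(\theta)=\theta$, since $\theta$ is the least prime above itself); that $\theta^\ast$ is a proper ideal uses Lemma 3.1(3) (for $1\in\theta^\ast$ would force $\nabla_A\subseteq\theta$), and its primeness reduces, via Lemma 3.1(2) and item (6), to primeness of $\theta$. The subtlest point is (5): to show $I_\ast\in Spec(A)$ for a prime ideal $I$, properness ($I_\ast\ne\nabla_A$) is exactly where semidegeneracy enters, since $\nabla_A\in K(A)$ lets (1) convert $\nabla_A\subseteq I_\ast$ into $1\in I$. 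For primeness, given $[\alpha,\beta]\subseteq I_\ast$ with $\alpha,\beta$ \emph{arbitrary} congruences, I would pass to compact subcongruences: if some compact $\alpha_0\subseteq\alpha$ has $\alpha_0\not\subseteq I_\ast$, then for every compact $\beta_0\subseteq\beta$ we have $\lambda_A(\alpha_0)\wedge\lambda_A(\beta_0)=\lambda_A([\alpha_0,\beta_0])\in I$, so primeness of $I$ and (1) give $\beta_0\subseteq I_\ast$; taking the join of all such $\beta_0$ (algebraicity of $Con(A)$) yields $\beta\subseteq I_\ast$. This reduction from arbitrary congruences to their compact approximations, and the legitimacy of recombining the join inside $I_\ast$, is precisely why I expect (5) to demand the most care.
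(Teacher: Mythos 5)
Your proof is correct: every item goes through as you describe, using Lemma 3.1(5)/(8) (your ``lowering trick''), Proposition 2.6 for $(\theta^{\ast})_{\ast}=\rho(\theta)$, algebraicity of $Con(A)$ for the compact-approximation argument in item (5), and compactness of $\nabla_A$ (semidegeneracy) exactly where you say it enters. The paper itself gives no proof of this lemma --- it is imported from \cite{GM2} --- and your argument is precisely the standard one that the paper's stated toolkit is designed to support, so it matches the intended route rather than diverging from it.
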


According to the previous lemma one can consider the order - preserving functions $u:Spec(A)\rightarrow Spec_{Id}(L(A))$ and $v:Spec_{Id}L((A))\rightarrow Spec(A)$, defined by $u(\phi) = \phi^{\ast}$ and $v(P) = P_{\ast}$, for all $\phi\in Spec(A)$ and $P\in Spec_{Id}(L(A))$. For any $\theta\in Con(A)$ we have $u(V(\theta)) = V_{Id}(\theta^{\ast})$ (see the proof of Proposition 4.17 in \cite{GM2}). By Lemma 3.4(7), for each $\alpha\in K(A)$ we have $u(V(\alpha)) = V_{Id}(\alpha^{\ast}) = V_{Id}(\lambda_A(\alpha))$.

\begin{propozitie}\cite{GM2}
The two functions $u:Spec(A)\rightarrow Spec_{Id}(L(A))$ and $v:Spec_{Id}L((A))\rightarrow Spec(A)$ are homeomorphisms, inverse to one another.
\end{propozitie}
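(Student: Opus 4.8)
The plan is to establish, in order, that $u$ and $v$ are well-defined mutually inverse bijections, and then that $u$ is simultaneously continuous and open, which together yield that it is a homeomorphism whose inverse is $v$. Well-definedness is immediate from the cited lemmas: Lemma 3.4(4) guarantees $\phi^{\ast}\in Spec_{Id}(L(A))$ whenever $\phi\in Spec(A)$, so $u$ indeed lands in $Spec_{Id}(L(A))$, while Lemma 3.4(5) gives $P_{\ast}\in Spec(A)$ for $P\in Spec_{Id}(L(A))$, so $v$ lands in $Spec(A)$. For the mutual inverse property, I would invoke Lemma 3.4(4) for $\phi\in Spec(A)$ to obtain $v(u(\phi))=(\phi^{\ast})_{\ast}=\phi$, and Lemma 3.4(2) for $P\in Spec_{Id}(L(A))$ to obtain $u(v(P))=(P_{\ast})^{\ast}=P$. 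Hence $v\circ u=\mathrm{id}_{Spec(A)}$ and $u\circ v=\mathrm{id}_{Spec_{Id}(L(A))}$, so both maps are bijections, each inverse to the other.

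The topological part is the real content. Recall that $(D(\alpha))_{\alpha\in K(A)}$ is a basis for $Spec(A)$ and $(D_{Id}(x))_{x\in L(A)}$ is a basis for $Spec_{Id}(L(A))$. The excerpt already records that for each $\alpha\in K(A)$ one has $u(V(\alpha))=V_{Id}(\lambda_A(\alpha))$; since $u$ is a bijection, passing to complements gives $u(D(\alpha))=D_{Id}(\lambda_A(\alpha))$, equivalently $u^{-1}(D_{Id}(\lambda_A(\alpha)))=D(\alpha)$. Because $\lambda_A:K(A)\rightarrow L(A)$ is surjective (by construction $L(A)=K(A)/{\equiv}$, and $\lambda_A$ is the quotient map), every basic open $D_{Id}(x)$ of $Spec_{Id}(L(A))$ has the form $D_{Id}(\lambda_A(\alpha))$ for some $\alpha\in K(A)$. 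Thus $u$ carries the basis $(D(\alpha))_{\alpha}$ bijectively onto the basis $(D_{Id}(x))_{x}$: preimages of basic opens are basic opens, so $u$ is continuous, and images of basic opens are basic opens, so $u$ is open. A continuous open bijection is a homeomorphism, and its inverse $v$ is then automatically one as well.

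I expect the only genuine subtlety to be the identity $u(D(\alpha))=D_{Id}(\lambda_A(\alpha))$, which rests on combining the already-proven relation $u(V(\alpha))=V_{Id}(\lambda_A(\alpha))$ with the bijectivity of $u$ (so that $u$ commutes with complementation); everything else is a formal consequence of Lemma 3.4. The harder groundwork---that $\theta^{\ast}$ and $I_{\ast}$ are prime, and that the two operations invert one another on spectra---has been absorbed into the preceding lemmas, so at this stage the argument is essentially a bookkeeping matching of the two bases together with the surjectivity of $\lambda_A$.
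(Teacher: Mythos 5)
Your proof is correct. One caveat about the comparison itself: the paper does not actually prove this proposition --- it is quoted verbatim from \cite{GM2}, so there is no internal argument to measure your attempt against. Your reconstruction is, however, exactly the route the paper's surrounding text points to: the mutual-inverse property is precisely items (2), (4), (5) of the seven-item lemma (numbered Lemma 3.3 in the compiled paper, although the paper itself miscites it as ``Lemma 3.4''), and the identity $u(V(\alpha)) = V_{Id}(\lambda_A(\alpha))$, which the paper records just before the proposition with a pointer to the proof of Proposition 4.17 in \cite{GM2}, is indeed the key topological ingredient. You also handle the two genuine subtleties correctly: the passage from $u(V(\alpha))=V_{Id}(\lambda_A(\alpha))$ to $u(D(\alpha))=D_{Id}(\lambda_A(\alpha))$ legitimately uses bijectivity of $u$ (so that images commute with complements), and the surjectivity of $\lambda_A:K(A)\rightarrow L(A)$ is what ensures every basic open $D_{Id}(x)$ is of the form $D_{Id}(\lambda_A(\alpha))$, giving openness of $u$ and not just continuity. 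If you wanted the argument to be fully self-contained, you could even derive the quoted identity from Lemma 3.3(6): for $\phi\in Spec(A)$, $\alpha\subseteq\phi$ iff $\lambda_A(\alpha)\in\phi^{\ast}$, which together with bijectivity of $u$ gives $u(V(\alpha))=V_{Id}(\lambda_A(\alpha))$ directly; but using it as a recorded fact, as you do, is entirely legitimate within this paper's framework.
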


\begin{propozitie}\cite{GM2}
The two functions $(\cdot)^{\ast}|_{RCon(A)}:RCon(A)\rightarrow Id(L(A))$ and $(\cdot)_{\ast}:Id(L(A))\rightarrow RCon(A)$ are frame isomorphisms, inverse to one another.
\end{propozitie}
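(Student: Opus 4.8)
The plan is to prove the statement in two stages: first that $(\cdot)^{\ast}|_{RCon(A)}$ and $(\cdot)_{\ast}$ are mutually inverse bijections between $RCon(A)$ and $Id(L(A))$, and then that the first of them is a frame homomorphism, from which the whole assertion follows. I would begin by checking that the two maps have the stated codomains: for $\theta\in RCon(A)$ the set $\theta^{\ast}$ is an ideal of $L(A)$ by the very definition of $(\cdot)^{\ast}$, and for $I\in Id(L(A))$ the equality $\rho(I_{\ast})=I_{\ast}$ of Lemma 3.4(3) shows that $I_{\ast}$ is a radical congruence, i.e. $I_{\ast}\in RCon(A)$. Mutual invertibility is then immediate from Lemma 3.4(2): for a radical congruence $\theta$ we have $(\theta^{\ast})_{\ast}=\rho(\theta)=\theta$, while for an arbitrary ideal $I$ we have $(I_{\ast})^{\ast}=I$. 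In particular the restricted $(\cdot)^{\ast}$ is injective (apply $(\cdot)_{\ast}$ to $\theta^{\ast}=\chi^{\ast}$ with $\theta,\chi$ radical) and surjective, so it is a bijection with inverse $(\cdot)_{\ast}$.

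Next I would verify that $(\cdot)^{\ast}|_{RCon(A)}$ preserves the frame operations. Preservation of finite meets is direct from Lemma 3.3(1), which gives $(\theta\cap\chi)^{\ast}=\theta^{\ast}\cap\chi^{\ast}$; note that $\theta\cap\chi$ is again radical by the identity $\rho(\alpha\cap\beta)=\rho(\alpha)\cap\rho(\beta)$, so $\cap$ is indeed the meet of the poset $RCon(A)$. For the bounds, $\nabla_A^{\ast}=\{\lambda_A(\alpha)\mid\alpha\in K(A)\}=L(A)$, since every element of $L(A)$ is of the form $\lambda_A(\alpha)$, and $(\rho(\Delta_A))^{\ast}=\Delta_A^{\ast}=\{\lambda_A(\Delta_A)\}=\{0\}$ by Lemma 3.4(3); thus the top and bottom are respected.

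For arbitrary joins I would be careful that the supremum computed in $RCon(A)$ is the frame join $\bigvee_{i}^{\cdot}\theta_i=\rho(\bigvee_{i}\theta_i)$ and not the plain join of $Con(A)$. Combining Lemma 3.4(3) with Lemma 3.3(2) yields
$$(\bigvee_{i}^{\cdot}\theta_i)^{\ast}=\bigl(\rho(\bigvee_{i}\theta_i)\bigr)^{\ast}=(\bigvee_{i}\theta_i)^{\ast}=\bigvee_{i}\theta_i^{\ast},$$
the final join being taken in the ideal frame $Id(L(A))$. Hence $(\cdot)^{\ast}|_{RCon(A)}$ preserves finite meets and arbitrary joins.

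Finally, a bijective frame homomorphism is automatically a frame isomorphism, its inverse being forced to preserve the order and therefore all meets and joins; consequently $(\cdot)^{\ast}|_{RCon(A)}$ and $(\cdot)_{\ast}$ are mutually inverse frame isomorphisms, as claimed. I expect the one genuinely delicate point to be the join computation, precisely because the join in $RCon(A)$ is defined through the radical operator $\rho$: the identity $\theta^{\ast}=(\rho(\theta))^{\ast}$ of Lemma 3.4(3) is exactly what reconciles the frame join $\bigvee^{\cdot}$ with the plain-join formula of Lemma 3.3(2). Everything else reduces to bookkeeping with the transfer identities already established.
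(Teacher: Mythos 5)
Your proof is correct and complete: the mutual-invertibility step, the meet/bound computations, the careful reconciliation of the frame join $\bigvee^{\cdot}$ with the plain join via $\theta^{\ast}=(\rho(\theta))^{\ast}$, and the closing observation that a bijective frame homomorphism is automatically a frame isomorphism are all sound. Note, however, that the paper itself states this proposition without proof (it is imported from \cite{GM2}), so there is no in-paper argument to compare against; yours is the natural reconstruction from the paper's transfer lemmas. One bookkeeping correction: the lemmas you cite as ``Lemma 3.3'' and ``Lemma 3.4'' are numbered Lemma 3.2 and Lemma 3.3 in this paper (the paper's own reference to ``Lemma 3.4(7)'' in Section 3 is a typo), so your citations should read Lemma 3.2(1), Lemma 3.2(2), Lemma 3.3(2) and Lemma 3.3(3).
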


\begin{remarca} We know that the prime spectrum $Spec_{Id}(L(A)$ of the bounded distributive lattice $L(A)$ is a spectral space in the sense of \cite{Hochster}, \cite{Dickmann} (or coherent space in the terminology of \cite{Johnstone}). By Proposition 3.4 it follows that $Spec(A)$ is a spectral space (see \cite{Agliano} for  a detailed discussion on the topological properties of $Spec(A)$).
\end{remarca}

\begin{remarca} By applying Proposition 3.4 it follows that $Max(A)$ is homeomorphic to the space $Max_{Id}(L(A)$ of maximal ideal of the lattice $L(A)$. According to \cite{Johnstone}, p.66, $Max_{Id}(L(A))$ is a compact $T1$-space, therefore $Max(A)$ is also a compact $T1$-space.
\end{remarca}

\section{Algebras whose reticulation preserves the Boolean center}

\hspace{0.5cm} Let us consider an algebra $A$ in a semidegenerate congruence  modular variety $\mathcal{V}$ such that the set $K(A)$ of finitely generated congruences of $A$ is closed under the commutator operation.

Let us denote by $B(Con(A))$ the set of complemented elements in the bounded lattice $Con(A)$. Since $Con(A)$ has a canonical structure of integral bounded unital $l$- groupoid in the sense of Section 3 of \cite{Jipsen}, by applying Lemma 4 from \cite{Jipsen} or \cite{Galatos}, it follows that $B(Con(A))$ is a Boolean algebra in which $\alpha^{\perp}$ is the complement of a congruence $\alpha\in B(Con(A))$. Then $B(Con(A))$ is said to be the Boolean center of $Con(A)$ (or, shortly, the Boolean center of $A$). We shall without mention the basic properties of complemented elements (see eg. Section 6 of \cite{GM2}). For example, a congruence $\theta$ is complemented if and only if $\alpha\lor \alpha^{\perp} = \nabla_A$. For all $\theta,\vartheta\in Con(A)$ and $\alpha\in B(Con(A))$ we have $\theta\cap \alpha =[\theta,\alpha]$, $\alpha\rightarrow \theta  = \alpha^{\perp}\lor \theta$ and $(\theta\cap\vartheta)\lor \alpha = (\theta\lor \alpha)\cap (\vartheta\lor \alpha)$.

\begin{lema} \cite{GM2}
 For all congruences $\theta, \vartheta\in Con(A)$ the following hold:
\usecounter{nr}
\begin{list}{(\arabic{nr})}{\usecounter{nr}}
\item If $\theta\lor \vartheta = \nabla_A$ and $[\theta,\vartheta] = \Delta_A$ then $\theta,\vartheta\in B(Con(A))$;
\item For any integer $n\geq 1$, if $\theta\lor \vartheta = \nabla_A$ and $[[\theta,\theta]^n,[\vartheta,\vartheta]^n] = \Delta_A$ then $[\theta,\theta]^n,[\vartheta,\vartheta]^n\in B(Con(A))$;
\item $B(Con(A))\subseteq K(A)$.

\end{list}
\end{lema}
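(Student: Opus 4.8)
The plan is to prove the three parts in order, using the residuation structure on $Con(A)$ and the commutator identities recorded in Section 2. For part (1), I would argue that under the hypotheses $\theta\lor\vartheta=\nabla_A$ and $[\theta,\vartheta]=\Delta_A$, each of $\theta,\vartheta$ is complemented with the other serving as its complement. The join condition is given directly; the meet condition $\theta\cap\vartheta=\Delta_A$ follows from Lemma 2.3(1), which says $\theta\lor\vartheta=\nabla_A$ implies $[\theta,\vartheta]=\theta\cap\vartheta$, combined with the hypothesis $[\theta,\vartheta]=\Delta_A$. Thus $\theta$ and $\vartheta$ are complementary in the bounded lattice $Con(A)$, so both lie in $B(Con(A))$. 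I also expect to verify that the complement of $\theta$ is exactly $\theta^{\perp}$ using the residuation definition of $\alpha^{\perp}$, since a complement in a distributive-type lattice is unique once it exists.

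For part (2), the idea is to reduce to part (1) by replacing $\theta,\vartheta$ with the congruences $[\theta,\theta]^n$ and $[\vartheta,\vartheta]^n$. I would first invoke Lemma 2.3(3): from $\theta\lor\vartheta=\nabla_A$ we obtain $[\theta,\theta]^n\lor[\vartheta,\vartheta]^n=\nabla_A$ for every integer $n>0$. Together with the standing hypothesis $[[\theta,\theta]^n,[\vartheta,\vartheta]^n]=\Delta_A$, the pair $([\theta,\theta]^n,[\vartheta,\vartheta]^n)$ now satisfies exactly the hypotheses of part (1). Applying part (1) to this pair immediately yields $[\theta,\theta]^n,[\vartheta,\vartheta]^n\in B(Con(A))$, so part (2) is essentially a formal consequence of part (1) plus Lemma 2.3(3).

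For part (3), the claim is the inclusion $B(Con(A))\subseteq K(A)$, i.e. every complemented congruence is compact. Here I would take $\alpha\in B(Con(A))$ with complement $\alpha^{\perp}$, so $\alpha\lor\alpha^{\perp}=\nabla_A$. The key input is that $A$ is semidegenerate, which by the Kollár criterion quoted in Section 2 means $\nabla_A$ is compact. Since $\nabla_A=\alpha\lor\alpha^{\perp}$ is compact and is expressed as a join, I would use the standard fact that in an algebraic lattice a summand of a compact element that is itself a complemented (hence "join-retract") element must be compact: writing $\alpha^{\perp}$ as a directed join of compact congruences and using compactness of $\nabla_A$ together with $\alpha\cap\alpha^{\perp}=\Delta_A$ to extract a compact witness below $\alpha$ whose join with $\alpha^{\perp}$ is still $\nabla_A$, then matching it back to $\alpha$ by complementation.

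The main obstacle I anticipate is part (3): parts (1) and (2) are short deductions from the Section 2 lemmas, but the compactness of complemented congruences requires genuinely exploiting the interplay between the algebraic-lattice structure of $Con(A)$, the compactness of $\nabla_A$, and the complementation. The delicate point is that compactness does not in general pass to join-summands, so the argument must use the complement $\alpha^{\perp}$ essentially—extracting a compact $\gamma\subseteq\alpha$ with $\gamma\lor\alpha^{\perp}=\nabla_A$ and then showing $\gamma=\alpha$ via the uniqueness of complements in the Boolean center. I expect this retraction step, rather than any commutator computation, to be where the real care is needed.
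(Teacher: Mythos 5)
The paper itself gives no proof of this lemma: it is quoted verbatim from \cite{GM2}, so there is no internal argument to compare against. Judged on its own, your proposal is essentially the right reconstruction. Parts (1) and (2) are correct and complete as sketched: Lemma 2.3(1) turns the hypothesis $[\theta,\vartheta]=\Delta_A$ into $\theta\cap\vartheta=\Delta_A$, so $\theta$ and $\vartheta$ are mutual complements, and part (2) reduces to part (1) via Lemma 2.3(3). (Your side remark that the complement must equal $\theta^{\perp}$ ``by uniqueness of complements'' is not needed for membership in $B(Con(A))$, and uniqueness in the ambient lattice $Con(A)$ is not available, since $Con(A)$ is only modular, not distributive; but this does not affect the argument.)

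In part (3) your strategy is the correct one, with two points to repair. First, a transposition slip: to extract a compact witness \emph{below $\alpha$}, you must write $\alpha$ (not $\alpha^{\perp}$) as a directed join of compact congruences $\alpha=\bigvee_i\gamma_i$; then $\nabla_A=\alpha\lor\alpha^{\perp}=\bigvee_i(\gamma_i\lor\alpha^{\perp})$ is a directed join, and compactness of $\nabla_A$ (semidegeneracy, via Koll\'ar's criterion) yields a compact $\gamma\subseteq\alpha$ with $\gamma\lor\alpha^{\perp}=\nabla_A$. Decomposing $\alpha^{\perp}$, as written, only produces a compact witness below $\alpha^{\perp}$, which is useless here. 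Second, the closing step ``$\gamma=\alpha$ by uniqueness of complements in the Boolean center'' needs one observation to be legitimate: $\gamma\cap\alpha^{\perp}\subseteq\alpha\cap\alpha^{\perp}=\Delta_A$, so $\gamma$ is itself a complement of $\alpha^{\perp}$, hence $\gamma\in B(Con(A))$, and only then can you invoke uniqueness of complements inside the Boolean algebra $B(Con(A))$. A cleaner finish avoids the Boolean center entirely and uses congruence modularity directly: since $\gamma\subseteq\alpha$, the modular law gives $\alpha=\alpha\cap(\gamma\lor\alpha^{\perp})=\gamma\lor(\alpha\cap\alpha^{\perp})=\gamma$, so $\alpha$ is compact. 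With these two repairs your outline is a complete proof.
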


If $L$ is a bounded distributive lattice then $B(L)$ will denote the Boolean algebra of complemented elements of $L$. The Boolean algebra $B(L)$ is called the Boolean center of $L$.

\begin{lema} \cite{GM2}
If $\alpha\in B(Con(A))$ then $\lambda_A(\alpha)\in B(L(A))$.
\end{lema}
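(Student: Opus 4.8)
The plan is to exhibit the complement of $\lambda_A(\alpha)$ explicitly rather than argue abstractly. Since $\alpha\in B(Con(A))$, it has a Boolean complement, namely $\alpha^{\perp}$, which again lies in $B(Con(A))$. By Lemma 4.1(3) we have $B(Con(A))\subseteq K(A)$, so both $\alpha$ and $\alpha^{\perp}$ are compact congruences and hence $\lambda_A(\alpha)$ and $\lambda_A(\alpha^{\perp})$ are well-defined elements of $L(A)$ (recall $\lambda_A$ is only defined on $K(A)$). My candidate for the complement of $\lambda_A(\alpha)$ in $L(A)$ is $\lambda_A(\alpha^{\perp})$, and the whole argument reduces to transporting the two Boolean identities defining the complement across the map $\lambda_A$.

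First I would check the join condition. Because $\alpha$ is complemented with complement $\alpha^{\perp}$, we have $\alpha\lor\alpha^{\perp}=\nabla_A$. Applying Lemma 3.1(1) (preservation of finite joins) and Lemma 3.1(3) (which gives $\lambda_A(\nabla_A)=1$) yields
$$\lambda_A(\alpha)\lor\lambda_A(\alpha^{\perp})=\lambda_A(\alpha\lor\alpha^{\perp})=\lambda_A(\nabla_A)=1.$$
Next I would check the meet condition. Since $\alpha^{\perp}$ is the Boolean complement of $\alpha$ in the bounded lattice $Con(A)$, we also have $\alpha\cap\alpha^{\perp}=\Delta_A$. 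Using Lemma 3.1(2), which carries the meet $\cap$ to the lattice meet $\land$ of $L(A)$, together with $\lambda_A(\Delta_A)=0$ (by the very definition $0=\hat{\Delta_A}$), I obtain
$$\lambda_A(\alpha)\land\lambda_A(\alpha^{\perp})=\lambda_A(\alpha\cap\alpha^{\perp})=\lambda_A(\Delta_A)=0.$$
These two equalities say precisely that $\lambda_A(\alpha^{\perp})$ is a complement of $\lambda_A(\alpha)$ in the bounded distributive lattice $L(A)$, so $\lambda_A(\alpha)\in B(L(A))$, as desired.

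I do not expect any genuine obstacle: the proof is a one-line application of the fact that $\lambda_A$ is a bounded-lattice homomorphism on compact congruences. The only points needing care are the two bookkeeping facts that must be invoked at the outset, namely that the inclusion $B(Con(A))\subseteq K(A)$ from Lemma 4.1(3) is what makes $\lambda_A(\alpha)$ and $\lambda_A(\alpha^{\perp})$ meaningful, and that the relevant preservation properties of $\lambda_A$ (joins, meets, and the two bounds $0$ and $1$) are exactly Lemma 3.1(1)--(3). Everything else is automatic.
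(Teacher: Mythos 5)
Your proof is correct and complete: exhibiting $\lambda_A(\alpha^{\perp})$ as an explicit complement, justified by $B(Con(A))\subseteq K(A)$ (Lemma 4.1(3)) so that both images are defined, and then transporting $\alpha\lor\alpha^{\perp}=\nabla_A$ and $\alpha\cap\alpha^{\perp}=\Delta_A$ through Lemma 3.1(1)--(3), is exactly the right argument. Note that the paper itself gives no proof of this lemma --- it is quoted from \cite{GM2} --- so there is nothing in the present text to compare against, but your argument is the natural one and matches the machinery the paper sets up for this purpose.
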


The previous lemma allows us to consider the following map:

 $\lambda_A|_{B(Con(A))}: B(Con(A))\rightarrow B(L(A))$.

\begin{lema} \cite{GM2}
The map $\lambda_A|_{B(Con(A))}: B(Con(A))\rightarrow B(L(A))$ is an injective Boolean morphism.
\end{lema}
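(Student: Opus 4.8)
The plan is to check separately that $\lambda_A|_{B(Con(A))}$ preserves the Boolean operations and that it is injective, drawing almost everything from Lemma 3.1 together with the special behaviour of complemented congruences under the commutator. First, the map is well defined: by Lemma 4.1(3) we have $B(Con(A))\subseteq K(A)$, so $\lambda_A$ is indeed defined on $B(Con(A))$, and Lemma 4.2 guarantees that its values land in $B(L(A))$. Preservation of finite meets and joins is then immediate from Lemma 3.1(1) and Lemma 3.1(2), namely $\lambda_A(\alpha\lor\beta)=\lambda_A(\alpha)\lor\lambda_A(\beta)$ and $\lambda_A(\alpha\cap\beta)=\lambda_A(\alpha)\land\lambda_A(\beta)$, while the bounds are preserved because $\lambda_A(\Delta_A)=0$ and $\lambda_A(\nabla_A)=1$ by the very definition of $L(A)$.

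The only Boolean operation needing a short argument is the complement. Given $\alpha\in B(Con(A))$, its complement in $Con(A)$ is $\alpha^{\perp}$, characterised by $\alpha\lor\alpha^{\perp}=\nabla_A$ and $\alpha\cap\alpha^{\perp}=\Delta_A$. Applying $\lambda_A$ and using the two homomorphism identities just recalled gives $\lambda_A(\alpha)\lor\lambda_A(\alpha^{\perp})=1$ and $\lambda_A(\alpha)\land\lambda_A(\alpha^{\perp})=0$. Since complements are unique in the distributive lattice $L(A)$, this forces $\lambda_A(\alpha^{\perp})$ to be exactly the Boolean complement of $\lambda_A(\alpha)$, so the map is a Boolean morphism.

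For injectivity the crucial observation, and the step I expect to carry the real content, is that every complemented congruence is idempotent for the commutator, i.e. $[\alpha,\alpha]=\alpha$ whenever $\alpha\in B(Con(A))$. Indeed, $\alpha\lor\alpha^{\perp}=\nabla_A$ gives $[\alpha,\alpha^{\perp}]=\Delta_A$ by Lemma 2.3(1), and since the commutator distributes over joins we compute $[\alpha,\nabla_A]=[\alpha,\alpha\lor\alpha^{\perp}]=[\alpha,\alpha]\lor[\alpha,\alpha^{\perp}]=[\alpha,\alpha]$. On the other hand $[\alpha,\nabla_A]=\alpha$ because the variety is semidegenerate (Propositions 2.1 and 2.2), whence $[\alpha,\alpha]=\alpha$; a trivial induction then yields $[\alpha,\alpha]^k=\alpha$ for every $k\geq 1$.

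It remains to combine this with the fact that a Boolean-algebra homomorphism is injective as soon as its kernel is trivial. So suppose $\alpha\in B(Con(A))$ satisfies $\lambda_A(\alpha)=0$. By Lemma 3.1(4) this means $[\alpha,\alpha]^k=\Delta_A$ for some $k\geq 1$; but $[\alpha,\alpha]^k=\alpha$ by the previous paragraph, so $\alpha=\Delta_A$. Hence $\lambda_A|_{B(Con(A))}$ has trivial kernel and is therefore injective, completing the proof. Alternatively one could argue directly from $\lambda_A(\alpha)=\lambda_A(\beta)$: Lemma 3.1(8) yields the mutual inclusions $[\alpha,\alpha]^n\subseteq\beta$ and $[\beta,\beta]^m\subseteq\alpha$, and idempotency of complemented congruences turns these into $\alpha\subseteq\beta$ and $\beta\subseteq\alpha$, so $\alpha=\beta$.
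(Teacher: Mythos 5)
Your proof is correct, and all the internal references check out (Lemma 4.1(3) for $B(Con(A))\subseteq K(A)$, Lemma 4.2 for well-definedness, Lemma 3.1(1),(2) for the lattice operations, Lemma 2.3(1) together with Propositions 2.1 and 2.2 for the commutator computation, and Lemma 3.1(4) or 3.1(8) for injectivity). Note that the paper itself offers no proof to compare against: the lemma is quoted from the reticulation paper \cite{GM2}, so your argument is a reconstruction from the facts the present paper does state. The reconstruction is sound, and its real content is exactly where you located it: in a semidegenerate congruence modular variety every complemented congruence is commutator-idempotent, since $\alpha=[\alpha,\nabla_A]=[\alpha,\alpha\lor\alpha^{\perp}]=[\alpha,\alpha]\lor[\alpha,\alpha^{\perp}]=[\alpha,\alpha]$, which kills the obstruction (nilpotence of the commutator) that makes $\lambda_A$ non-injective on general compact congruences; after that, triviality of the kernel (or the two inclusions from Lemma 3.1(8)) finishes the injectivity, and preservation of the Boolean structure is immediate from Lemma 3.1 plus uniqueness of complements in the distributive lattice $L(A)$. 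This is also the natural route one would expect the cited source to take, so there is nothing to flag.
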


The following proposition characterizes the algebras $A$ of $\mathcal{V}$ for which $\lambda_A|_{B(Con(A))}$ is a Boolean isomorphism.

\begin{propozitie}
 The following assertions are equivalent:
\usecounter{nr}
\begin{list}{(\arabic{nr})}{\usecounter{nr}}
\item The map $\lambda_A|_{B(Con(A))}: B(Con(A))\rightarrow B(L(A))$ is a Boolean isomorphism;
\item The map $\lambda_A|_{B(Con(A))}: B(Con(A))\rightarrow B(L(A))$ is surjective;
\item For any $\alpha\in K(A)$, if $\lambda_A(\alpha)\in B(L(A))$ then there exists an integer $n\geq0$ such that $[\alpha,\alpha]^n\in B(Con(A))$;
\item For any $\alpha\in K(A)$, $\lambda_A(\alpha)\in B(L(A))$ if and only if there exists an integer $n\geq0$ such that $[\alpha,\alpha]^n\in B(Con(A))$.
\end{list}
\end{propozitie}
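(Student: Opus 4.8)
The plan is to prove the chain of implications $(1)\Rightarrow(2)\Rightarrow(3)\Rightarrow(4)\Rightarrow(1)$, exploiting the fact that by Lemmas 4.2 and 4.3 the map $\lambda_A|_{B(Con(A))}$ is already known to be an injective Boolean morphism into $B(L(A))$. This immediately trivializes the equivalence $(1)\Leftrightarrow(2)$: an injective Boolean morphism is an isomorphism precisely when it is surjective, so no real work is needed there beyond citing the two preceding lemmas.

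The substance of the proposition lives in linking surjectivity of $\lambda_A|_{B(Con(A))}$ to the ``lifting'' condition $(3)$. For $(2)\Rightarrow(3)$, I would take $\alpha\in K(A)$ with $\lambda_A(\alpha)\in B(L(A))$; by surjectivity there is some $\beta\in B(Con(A))$ with $\lambda_A(\beta)=\lambda_A(\alpha)$. By Lemma 3.1(8), $\lambda_A(\alpha)=\lambda_A(\beta)$ forces $\rho(\alpha)=\rho(\beta)$, hence $[\alpha,\alpha]^n\subseteq\beta$ and $[\beta,\beta]^m\subseteq\alpha$ for suitable exponents. The target is to produce an integer $n$ with $[\alpha,\alpha]^n\in B(Con(A))$. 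The natural route is to use Lemma 4.1(2): if I can exhibit $\vartheta$ with $[\alpha,\alpha]^n\lor\vartheta=\nabla_A$ and $[[\alpha,\alpha]^n,\vartheta]=\Delta_A$ (e.g.\ taking $\vartheta$ related to $\beta^{\perp}$), then $[\alpha,\alpha]^n$ is complemented. Here is where the semiprime-style manipulations with $\rho$, the equalities $\rho([\alpha,\alpha]^n)=\rho(\alpha)$ from Lemma 2.5(7), and the behaviour of complements under $\lambda_A$ must be combined carefully; I expect this to be the main obstacle, since one must control a single uniform exponent $n$ making both the join-to-$\nabla_A$ and the commutator-to-$\Delta_A$ conditions hold simultaneously.

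The implication $(3)\Rightarrow(4)$ should be routine: $(4)$ is the biconditional whose nontrivial direction is exactly $(3)$, and the reverse direction (if some $[\alpha,\alpha]^n\in B(Con(A))$ then $\lambda_A(\alpha)\in B(L(A))$) follows because $\lambda_A([\alpha,\alpha]^n)=\lambda_A(\alpha)$ by Lemma 3.1(5) while $\lambda_A$ carries the complemented congruence $[\alpha,\alpha]^n$ into $B(L(A))$ by Lemma 4.2. Finally, for $(4)\Rightarrow(1)$, equivalently $(3)\Rightarrow(2)$, I would verify surjectivity directly: given any $e\in B(L(A))$, since $\lambda_A:K(A)\to L(A)$ is onto, pick $\alpha\in K(A)$ with $\lambda_A(\alpha)=e\in B(L(A))$; condition $(3)$ yields $n$ with $[\alpha,\alpha]^n\in B(Con(A))$, and then $\lambda_A([\alpha,\alpha]^n)=\lambda_A(\alpha)=e$ by Lemma 3.1(5), exhibiting $e$ as the image of a complemented congruence.

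In summary, the architecture is a four-way cycle whose easy edges are bookkeeping with the already-established morphism properties of $\lambda_A$, and whose single genuinely technical edge is $(2)\Rightarrow(3)$, where the idempotent in $L(A)$ must be lifted to a complemented congruence via a power of the commutator. The key tools there are Lemma 3.1(8) to translate the lattice-level equality into congruence inclusions up to radical, Lemma 2.5 to manage radicals of commutator powers, and Lemma 4.1(1)--(2) to recognize when a congruence (or a power of it) actually lies in $B(Con(A))$.
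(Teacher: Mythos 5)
Your architecture is exactly the paper's (the cycle with $(1)\Leftrightarrow(2)$ via injectivity from Lemmas 4.2--4.3, the easy half of $(3)\Rightarrow(4)$ via $\lambda_A([\alpha,\alpha]^n)=\lambda_A(\alpha)$ and Lemma 4.2, and surjectivity from $(4)$ by lifting $e\in B(L(A))$ through the surjection $\lambda_A:K(A)\rightarrow L(A)$), and those edges are handled correctly. The problem is that you leave the one substantive implication, $(2)\Rightarrow(3)$, unfinished: after producing $\beta\in B(Con(A))$ with $\lambda_A(\beta)=\lambda_A(\alpha)$ and extracting $[\alpha,\alpha]^n\subseteq\beta$ and $[\beta,\beta]^m\subseteq\alpha$ from Lemma 3.1(8), you explicitly defer the ``main obstacle'' of exhibiting a witness $\vartheta$ and a single exponent making both hypotheses of Lemma 4.1 hold. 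Since this implication is the entire content of the proposition, the proposal as written is incomplete, not merely terse.

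The obstacle you flag dissolves with two observations, which is how the paper closes the step. First, because $\beta$ is complemented, $[\beta,\beta]=\beta\cap\beta=\beta$ (recall from the preamble of Section 4 that $\theta\cap\alpha=[\theta,\alpha]$ whenever $\alpha\in B(Con(A))$); hence $[\beta,\beta]^m=\beta$ for every $m$, and your second inclusion reads simply $\beta\subseteq\alpha$. Only one exponent $n$ (the one with $[\alpha,\alpha]^n\subseteq\beta$) remains in play, so there is no uniformity problem to negotiate. Second, take $\gamma=\beta^{\perp}$, so that $\beta\lor\gamma=\nabla_A$ and $[\beta,\gamma]=\Delta_A$. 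From $\beta\subseteq\alpha$ one gets $\alpha\lor\gamma=\nabla_A$, and Lemma 2.3(3) --- which holds for \emph{every} exponent, in particular for your $n$ --- gives $[\alpha,\alpha]^n\lor[\gamma,\gamma]^n=\nabla_A$; monotonicity of the commutator gives $[[\alpha,\alpha]^n,[\gamma,\gamma]^n]\subseteq[\beta,\gamma]=\Delta_A$, since $[\alpha,\alpha]^n\subseteq\beta$ and $[\gamma,\gamma]^n\subseteq\gamma$. Now Lemma 4.1(2) applied to the pair $(\alpha,\gamma)$ (equivalently, your own scheme via Lemma 4.1(1) with $\vartheta=[\gamma,\gamma]^n$) yields $[\alpha,\alpha]^n\in B(Con(A))$, which is $(3)$. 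So your plan and choice of tools were right; the missing hinge is the identity $[\beta,\beta]=\beta$ for complemented $\beta$, without which the two inclusions from Lemma 3.1(8) cannot be assembled.
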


\begin{proof}
$(1)\Leftrightarrow(2)$ This equivalence follows by using Lemma 4.3.

$(2)\Rightarrow(3)$ Let $\alpha$ be a compact congruence of $A$ such that $\lambda_A(\alpha)\in B(L(A))$. According to the hypothesis $(2)$, there exists $\beta\in B(Con(A))$ such that $\lambda_A(\alpha) = \lambda_A(\beta)$. By Lemma 3.1(8), there exists an integer $n\geq0$ such that $[\alpha,\alpha]^n\leq \beta\leq\alpha$. From $\beta\in B(Con(A))$ we get $\beta\lor\gamma = \nabla_A$ and $[\beta,\gamma] = \Delta_A$, for some $\gamma\in B(Con(A))$. Thus $\alpha\lor\gamma = \nabla_A$, hence $[\alpha,\alpha]^n\lor[\gamma,\gamma]^n = \nabla_A$ (cf. Lemma 2.4(3)). We remark that $[[\alpha,\alpha]^n,[\gamma,\gamma]^n] \subseteq[\alpha,\gamma] = \Delta_A$, $[[\alpha,\alpha]^n,[\gamma,\gamma]^n] = \Delta_A$. Applying Lemma 4.1(2) we obtain $[\alpha,\alpha]^n\in B(Con(A)))$.

$(3)\Rightarrow(4)$ Assume that there exists an integer $n\geq0$ such that $[\alpha,\alpha]^n\in B(Con((A))$. Then $\lambda_A(\alpha) = \lambda_A([\alpha,\alpha]^n)\in B(L(A))$ (cf. Lemmas 3.1(5) and 4.2).

$(4)\Rightarrow(2)$ Assume that $\alpha$ is a compact congruence such that $\lambda_A(\alpha)\in B(L(A))$, so there exists an integer $n\geq0$ such that $[\alpha,\alpha]^n\in B(Con(A))$. Since $\lambda_A(\alpha) = \lambda_A([\alpha,\alpha]^n)$ and $[\alpha,\alpha]^n\in B(Con(A))$ it follows that $\lambda_A|_{B(Con(A))}$ is surjective.
\end{proof}

\begin{definitie}
We say that the reticulation of $A$ preserves the Boolean center if the equivalent conditions from Proposition 4.4 fulfill.
\end{definitie}

Let us consider the following property:

$(\star)$ For all $\alpha,\beta \in K(A)$ and for any integer $n\geq1$ the exists an integer $m\geq 0$ such that $[[\alpha,\alpha]^m,[\beta,\beta]^m]]\subseteq [\alpha,\beta]^n$.

If the commutator operation $[\cdot,\cdot]$ is associative then it is obvious that the algebra $A$ verifies the condition $(\star)$.

\begin{remarca} \cite{GM2}
If the algebra $A$ verifies the property $(\star)$ or is semiprime then the reticulation of $A$ preserves the Boolean center.
\end{remarca}

\begin{lema}
If $\alpha\in B(Con(A))$ then $D_A(\alpha)$ is a clopen subset of $Spec(A)$.
\end{lema}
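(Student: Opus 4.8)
The plan is to exploit the complementation of $\alpha$ together with the translation rules between the lattice operations on $Con(A)$ and the basic open sets of $Spec(A)$ recorded at the end of Section 2. Since $\alpha\in B(Con(A))$, there is a complement $\alpha^{\perp}\in B(Con(A))$ with $\alpha\lor\alpha^{\perp}=\nabla_A$; moreover, as noted in Section 4, for a complemented congruence we have $\alpha\cap\alpha^{\perp}=[\alpha,\alpha^{\perp}]=\Delta_A$. These two identities are the only algebraic input needed, and the whole argument consists in converting the Boolean complement in $Con(A)$ into a topological complement in $Spec(A)$.

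First I would compute the union $D(\alpha)\cup D(\alpha^{\perp})$. Using the rule $\bigcup_{i}D(\theta_i)=D(\bigvee_i\theta_i)$ together with $\alpha\lor\alpha^{\perp}=\nabla_A$, this equals $D(\nabla_A)$. Since every prime congruence is by definition distinct from $\nabla_A$ and $\nabla_A$ is the top element, we get $V(\nabla_A)=\emptyset$ and hence $D(\nabla_A)=Spec(A)$; thus the two basic opens cover the whole space.

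Next I would compute the intersection $D(\alpha)\cap D(\alpha^{\perp})$. Using $D(\alpha)\cap D(\beta)=D([\alpha,\beta])$ together with $[\alpha,\alpha^{\perp}]=\Delta_A$, this equals $D(\Delta_A)$. Since every congruence contains the bottom element $\Delta_A$, we have $V(\Delta_A)=Spec(A)$ and hence $D(\Delta_A)=\emptyset$; thus the two opens are disjoint.

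Combining the two computations yields $D(\alpha^{\perp})=Spec(A)-D(\alpha)$, so $D(\alpha)$ is the complement of the open set $D(\alpha^{\perp})$ and is therefore closed; being itself of the form $D(\theta)$ it is also open, whence clopen. I expect no real obstacle here: the entire content lies in the two elementary identifications $D(\nabla_A)=Spec(A)$ and $D(\Delta_A)=\emptyset$, both immediate from the definition of a prime congruence, after which the conclusion is a purely topological complementation argument.
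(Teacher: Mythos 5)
Your proof is correct and takes essentially the same route as the paper: both pick the complement of $\alpha$ (you call it $\alpha^{\perp}$, the paper calls it $\beta$), then use $D(\alpha)\cup D(\alpha^{\perp})=D(\alpha\lor\alpha^{\perp})=D(\nabla_A)=Spec(A)$ and $D(\alpha)\cap D(\alpha^{\perp})=D([\alpha,\alpha^{\perp}])=D(\Delta_A)=\emptyset$ to exhibit $D(\alpha)$ as the complement of an open set. Your write-up is in fact slightly more complete, since you spell out why $D(\nabla_A)=Spec(A)$ and $D(\Delta_A)=\emptyset$ and state the final complementation step explicitly, which the paper leaves implicit.
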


\begin{proof} If $\alpha\in B(Con(A))$ then $\alpha\lor\beta=\nabla_A$ and $[\alpha,\beta]=\Delta_A$. for some $\beta\in B(Con(A))\subseteq K(A)$, so $D_A(\alpha)\cup D_A(\beta) = D_A(\alpha\lor\beta) = D_A(\nabla_A)=Spec(A)$ and $D_A(\alpha)\cap D_A(\beta)=D_A(\alpha\,\beta) = D_A(\Delta_A)=\emptyset$.
 \end{proof}

If $X$ is a topological space then we denote by $Clop(X)$ the Boolean algebra of clopen subsets of $X$. According to the previous lemma one can consider the map $D_A|_{B(Con(A))}: B(Con(A))\rightarrow Clop(Spec(A))$.

\begin{lema}
$D_A|_{B(Con(A))}: B(Con(A))\rightarrow Clop(Spec(A))$ is an injective Boolean morphism.
\end{lema}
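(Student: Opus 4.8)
The plan is to verify that $D_A|_{B(Con(A))}$ preserves the Boolean operations and then to establish injectivity by reducing it to the already-proved injectivity of $\lambda_A|_{B(Con(A))}$ from Lemma 4.3. Throughout I shall use that for complemented congruences the meet coincides with the commutator, so that the meet in the Boolean algebra $B(Con(A))$ of $\alpha,\beta$ is $\alpha\cap\beta=[\alpha,\beta]$, together with the basic identities $D(\alpha)\cap D(\beta)=D([\alpha,\beta])$ and $D(\alpha)\cup D(\beta)=D(\alpha\lor\beta)$ recalled in Section 2.

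For the morphism part I would first record that $D_A$ sends the bounds correctly: $D_A(\nabla_A)=Spec(A)$ and $D_A(\Delta_A)=\emptyset$. Preservation of joins is immediate from $D(\alpha\lor\beta)=D(\alpha)\cup D(\beta)$, and preservation of meets follows because, for $\alpha,\beta\in B(Con(A))$, the meet is $[\alpha,\beta]$ and $D([\alpha,\beta])=D(\alpha)\cap D(\beta)$. For complements I would argue exactly as in the proof of Lemma 4.7: from $\alpha\lor\alpha^{\perp}=\nabla_A$ and $\alpha\cap\alpha^{\perp}=[\alpha,\alpha^{\perp}]=\Delta_A$ one obtains $D_A(\alpha)\cup D_A(\alpha^{\perp})=Spec(A)$ and $D_A(\alpha)\cap D_A(\alpha^{\perp})=\emptyset$, so $D_A(\alpha^{\perp})$ is the complement of $D_A(\alpha)$ in $Clop(Spec(A))$. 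This shows that $D_A|_{B(Con(A))}$ is a Boolean morphism.

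For injectivity I would use that a morphism of Boolean algebras is injective precisely when its kernel is trivial, so it suffices to prove that $D_A(\alpha)=\emptyset$ forces $\alpha=\Delta_A$ for $\alpha\in B(Con(A))$. Now $D_A(\alpha)=\emptyset$ means $V_A(\alpha)=Spec(A)$, i.e. $\alpha\subseteq\bigcap Spec(A)=\rho(\Delta_A)$. Since $B(Con(A))\subseteq K(A)$ by Lemma 4.1(3), the congruence $\alpha$ is compact, so Lemma 3.1(6) yields $\lambda_A(\alpha)=0=\lambda_A(\Delta_A)$. As $\Delta_A\in B(Con(A))$, the injectivity of $\lambda_A|_{B(Con(A))}$ (Lemma 4.3) gives $\alpha=\Delta_A$, as required.

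The main obstacle is precisely this last step: the condition $D_A(\alpha)=\emptyset$ delivers only $\alpha\subseteq\rho(\Delta_A)$, which need not force $\alpha=\Delta_A$ when $A$ is not semiprime. The point is to exploit that $\alpha$ is both complemented and compact, pushing the vanishing through the reticulation and invoking the injectivity of $\lambda_A|_{B(Con(A))}$; alternatively one could use the homeomorphism $u$ of Proposition 3.4, under which $D_A(\alpha)$ corresponds to $D_{Id}(\lambda_A(\alpha))$, so that $D_A(\alpha)=\emptyset$ is equivalent to $\lambda_A(\alpha)=0$.
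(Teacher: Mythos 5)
Your proof is correct, but the injectivity argument follows a genuinely different route from the paper's. The paper dismisses the morphism part as clear (your detailed verification of bounds, joins, meets and complements is exactly what is meant) and then proves injectivity by checking the preimage of the \emph{top} element: citing Proposition 3.5 of \cite{GM2}, for \emph{any} $\alpha\in Con(A)$ one has $D_A(\alpha)=Spec(A)$ if and only if $\alpha=\nabla_A$. This is an elementary consequence of semidegeneracy: since $\nabla_A$ is compact, every proper congruence lies below a maximal, hence prime, congruence, so $V_A(\alpha)\neq\emptyset$. You instead check the preimage of the \emph{bottom} element, which is genuinely subtler, as you yourself note: $D_A(\alpha)=\emptyset$ only yields $\alpha\subseteq\rho(\Delta_A)$, which does not force $\alpha=\Delta_A$ unless $A$ is semiprime. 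You resolve this correctly by exploiting $B(Con(A))\subseteq K(A)$ (Lemma 4.1(3)), Lemma 3.1(6), and the injectivity of $\lambda_A|_{B(Con(A))}$ (Lemma 4.3). Both routes are valid; the paper's buys economy, since the top-element characterization holds for arbitrary congruences with no recourse to the reticulation, while yours isolates precisely where complementedness (or compactness) is needed. Your detour could also be lightened without the reticulation: if $\alpha\in B(Con(A))$, $\alpha\subseteq\rho(\Delta_A)\subseteq Rad(A)$ and $\alpha\neq\Delta_A$, then $\alpha^{\perp}\neq\nabla_A$, so $\alpha^{\perp}\subseteq\phi$ for some maximal $\phi$, whence $\nabla_A=\alpha\lor\alpha^{\perp}\subseteq\phi$, a contradiction --- this is essentially Lemma 7.2(1) proved later in the paper.
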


\begin{proof} It is clear that $D_A|_{B(Con(A))}$ preserves the Boolean operations. In order to prove the injectivity of $D_A|_{B(Con(A))}$, observe that for any $\alpha\in Con(A)$, $D_A(\alpha)= Spec(A)$ if and only if $\alpha=\nabla_A$ (cf. Proposition 3.5 of \cite{GM2}).
\end{proof}

\begin{teorema}
If the reticulation of $A$ preserves the Boolean center then the map $D_A|_{B(Con(A))}: B(Con(A))\rightarrow Clop(Spec(A))$ is a Boolean isomorphism.
\end{teorema}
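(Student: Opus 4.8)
The plan is to exhibit $D_A|_{B(Con(A))}$ as a composite of three maps, each of which is a Boolean isomorphism, so that the conclusion is immediate. Lemma 4.8 already guarantees that $D_A|_{B(Con(A))}$ is an injective Boolean morphism, so in principle only surjectivity remains; but the compositional description I have in mind yields bijectivity directly, without treating surjectivity separately.

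First I would record the three isomorphisms. Under the standing hypothesis that the reticulation of $A$ preserves the Boolean center, Definition 4.5 together with Proposition 4.4 makes $\lambda_A|_{B(Con(A))}: B(Con(A))\rightarrow B(L(A))$ a Boolean isomorphism. Next I would invoke the standard Stone theory of bounded distributive lattices (cf. \cite{Johnstone}, \cite{BalbesDwinger}): for the lattice $L(A)$ the assignment $s\colon b\mapsto D_{Id}(b)$ is a Boolean isomorphism of $B(L(A))$ onto $Clop(Spec_{Id}(L(A)))$, the point being that in the spectral space $Spec_{Id}(L(A))$ the clopen sets are exactly the sets $D_{Id}(b)$ with $b\in B(L(A))$. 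Finally, the homeomorphism $u\colon Spec(A)\rightarrow Spec_{Id}(L(A))$ of Proposition 3.4 gives, by pullback along $u$, a Boolean isomorphism $u^{-1}(\cdot)\colon Clop(Spec_{Id}(L(A)))\rightarrow Clop(Spec(A))$.

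The crux is to check that these three maps compose to $D_A|_{B(Con(A))}$. Here I would use the identity recorded just before Proposition 3.4, namely $u(V(\alpha))=V_{Id}(\lambda_A(\alpha))$ for every $\alpha\in K(A)$, which follows from Lemma 3.4(7). Passing to complements in $Spec(A)$ and in $Spec_{Id}(L(A))$ gives $u(D_A(\alpha))=D_{Id}(\lambda_A(\alpha))$, hence $D_A(\alpha)=u^{-1}(D_{Id}(\lambda_A(\alpha)))$ for each $\alpha\in B(Con(A))\subseteq K(A)$ (using Lemma 4.2 to see that $\lambda_A(\alpha)\in B(L(A))$). This is precisely the equality $D_A|_{B(Con(A))}=u^{-1}(\cdot)\circ s\circ\lambda_A|_{B(Con(A))}$, and as a composite of three Boolean isomorphisms it is itself a Boolean isomorphism, which is the desired conclusion.

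I expect the main (and essentially the only) obstacle to be the cited lattice-theoretic fact that $b\mapsto D_{Id}(b)$ identifies $B(L(A))$ with $Clop(Spec_{Id}(L(A)))$; everything else is bookkeeping about a commuting triangle and the transfer identity $u(V(\alpha))=V_{Id}(\lambda_A(\alpha))$. The subtle point inside that fact is the surjectivity of $s$, that is, that no clopen subset of $Spec_{Id}(L(A))$ arises from a non-complemented lattice element; this is exactly where the spectrality of $Spec_{Id}(L(A))$ (compactness of a clopen together with its complement, Remark 3.6) is needed.
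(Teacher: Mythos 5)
Your proof is correct, but it takes a genuinely different route from the paper's. You factor $D_A|_{B(Con(A))}$ as $u^{-1}(\cdot)\circ D_{Id}(\cdot)\circ \lambda_A|_{B(Con(A))}$, using the hypothesis exactly once --- via Proposition 4.4(1)/(2), to make $\lambda_A|_{B(Con(A))}$ a Boolean isomorphism --- and delegating all the topology to two ingredients: the homeomorphism $u$ of Proposition 3.4 together with the identity $u(V_A(\alpha))=V_{Id}(\lambda_A(\alpha))$ (which, since $u$ is a bijection, does yield $u(D_A(\alpha))=D_{Id}(\lambda_A(\alpha))$ upon taking complements, and $B(Con(A))\subseteq K(A)$ by Lemma 4.1(3) makes the identity applicable), and the classical Stone-duality fact that $b\mapsto D_{Id}(b)$ identifies $B(L)$ with $Clop(Spec_{Id}(L))$ for any bounded distributive lattice $L$. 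The paper instead proves surjectivity directly on $Spec(A)$: given a clopen $U$, it writes $U=V_A(\theta)$ and its complement as $V_A(\chi)$, uses compactness of $\nabla_A$ to descend to compact congruences $\alpha\subseteq\theta$, $\beta\subseteq\chi$ with $\alpha\lor\beta=\nabla_A$ and $[\alpha,\beta]\subseteq\rho(\Delta_A)$, notes $\lambda_A(\alpha),\lambda_A(\beta)\in B(L(A))$, and then applies the hypothesis in the form of Proposition 4.4(3) to get $[\alpha,\alpha]^n,[\beta,\beta]^n\in B(Con(A))$ and $U=V_A([\alpha,\alpha]^n)=D_A([\beta,\beta]^n)$. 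The two arguments are morally parallel: the lattice fact you invoke is proved by precisely the compactness-and-complementation argument that the paper reruns inside $Spec(A)$ (with algebraic compactness of $\nabla_A$ standing in for topological compactness). What yours buys is modularity and consistency with the paper's reticulation-transfer philosophy, at the price of citing a result (every clopen of $Spec_{Id}(L)$ is $D_{Id}(b)$ for a complemented $b$) that the paper never states, only gestures at for rings in Remark 4.10; what the paper's buys is self-containedness within its Sections 2--4 and an explicit complemented congruence representing the given clopen.
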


\begin{proof}
According to Lemma 4.8, it suffices to check the surjectivity of $D_A|_{B(Con(A))}$. Let $U$ be a clopen subset of $Spec(A)$. Then there exist $\theta,\chi\in Con(A)$ such that $U=V_A(\theta)$, $V_A(\theta\lor\chi) = V_A(\theta)\cap  V_A(\chi)= \phi$ and $V_A([\theta,\chi])= V_A(\theta)\cup V_A(\chi)= Spec(A)$, so $\theta\lor\chi = \nabla_A$ and $[\theta,\chi]\subseteq \rho(\Delta_A)$. Since $\nabla_A$ is compact and $K(A)$ is closed under joins there exist $\alpha,\beta\in K(A)$ such that $\alpha\subseteq\theta$, $\beta\subseteq\chi$, $\alpha\lor\beta=\nabla_A$ and $[\alpha,\beta]\subseteq \rho(\Delta_A)$. By Lemma 3.1,(1) and (2) it follows that $\lambda_A(\alpha)\lor\lambda_A(\beta)=\lambda_A(\alpha\lor\beta) = \lambda_A(\nabla_A) = 1$ and $\lambda_A(\alpha)\land\lambda_A(\beta)=\lambda_A([\alpha,\beta])=0$, hence $\lambda_A(\alpha),\lambda_A(\beta)\in B(Con(A))$.

In accordance with the hypothesis that the reticulation of $A$ preserves the Boolean center, from $\lambda_A(\alpha),\lambda_A(\beta)\in B(Con(A))$ it follows that $[\alpha,\alpha]^n, [\beta,\beta]^n\in B(Con(A))$, for some integer $n\geq 0$. By applying Lemma 2.4(3) we have the equality $[\alpha,\alpha]^n\lor [\beta,\beta]^n = \nabla_A$, hence we obtain $V_A([\alpha,\alpha]^n)\cap V_A([\beta,\beta]^n) = V_A([\alpha,\alpha]^n)\lor[\beta,\beta]^n) = V_A(\nabla_A) = \phi$. We observe that we have the following inclusions  $[[\alpha,\alpha]^n,[\beta,\beta]^n]\subseteq [\alpha,\beta]\subseteq \rho(\Delta_A)$, therefore $Spec(A) = V_A(\rho(\Delta_A)\subseteq V_A([[\alpha,\alpha]^n,[\beta,\beta]^n])= V_A([\alpha,\alpha]^n)\cup V_A([\beta,\beta]^n)$, so $V_A([\alpha,\alpha]^n)\cup V_A([\beta,\beta]^n)=Spec(A)$.

From $[\alpha,\alpha]^n\subseteq\alpha\subseteq\theta$ and $[\beta,\beta]^n\subseteq\beta\subseteq\chi$ we get $V_A([\alpha,\alpha]^n)\subseteq V_A(\theta)$ and  $V_A([\beta,\beta]^n)\subseteq V_A(\chi)$, therefore $U=V_A(\theta)=V_A([\alpha,\alpha]^n)=D_A([\beta,\beta]^n)$ (because $[\beta,\beta]^n$ is the complement of $[\alpha,\alpha]^n)$. Then we get the surjectivity of $D_A|_{B(Con(A))}$.

\end{proof}

If it is not danger of confusion we shall write $D_A$ instead of $D_A|_{B(Con(A))}$.

\begin{remarca} The previous theorem is a generalization to universal algebra of a classical result in ring theory: the map $e\mapsto D_A(e)$ is an isomorphism between the Boolean algebra of idempotents of a commutative ring $R$ and the Boolean algebra of a clopen subsets of $Spec(R)$ (see \cite{Jong}, 00EE ).
\end{remarca}

The converse of Theorem 4.9 is also true.

\begin{teorema} If the map $D_A|_{B(Con(A))}: B(Con(A))\rightarrow Clop(Spec(A))$ is a Boolean isomorphism then the reticulation of $A$ preserves the Boolean center.
\end{teorema}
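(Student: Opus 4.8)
The plan is to reduce the statement to the surjectivity of $\lambda_A|_{B(Con(A))}\colon B(Con(A))\to B(L(A))$, since by Proposition 4.4 (implication $(2)\Rightarrow(1)$) and Definition 4.5 this is exactly what it means for the reticulation of $A$ to preserve the Boolean center. The whole argument is then a transport of the hypothesis through the homeomorphism $u\colon Spec(A)\to Spec_{Id}(L(A))$ of Proposition 3.4 together with the reticulation map $\lambda_A$.

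First I would record the key compatibility identity. For every $\alpha\in K(A)$ the discussion preceding Proposition 3.4 gives $u(V_A(\alpha))=V_{Id}(\lambda_A(\alpha))$. Since $u$ is a bijection, passing to complements yields $u(D_A(\alpha))=D_{Id}(\lambda_A(\alpha))$, equivalently $D_A(\alpha)=u^{-1}(D_{Id}(\lambda_A(\alpha)))$. In particular this holds for every $\alpha\in B(Con(A))\subseteq K(A)$, and by Lemma 4.2 the element $\lambda_A(\alpha)$ then lies in $B(L(A))$, so that $D_{Id}(\lambda_A(\alpha))$ is genuinely a clopen of $Spec_{Id}(L(A))$.

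Now I would prove surjectivity of $\lambda_A|_{B(Con(A))}$. Fix $b\in B(L(A))$ with complement $b'$. Since $b$ is complemented, the basic open $D_{Id}(b)$ is clopen in $Spec_{Id}(L(A))$, its complement being $D_{Id}(b')$. As $u$ is a homeomorphism, $U:=u^{-1}(D_{Id}(b))$ is a clopen subset of $Spec(A)$. The hypothesis that $D_A|_{B(Con(A))}$ is a Boolean isomorphism — in particular surjective — then furnishes a congruence $\alpha\in B(Con(A))$ with $D_A(\alpha)=U$. Applying $u$ and invoking the identity from the previous step, $D_{Id}(\lambda_A(\alpha))=u(D_A(\alpha))=u(U)=D_{Id}(b)$.

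It remains to pass from equality of these basic opens to equality of the underlying lattice elements, and this is the only mildly delicate point. Here I would use that the map $x\mapsto D_{Id}(x)$ is injective on the bounded distributive lattice $L(A)$: if $x\not\leq y$, the prime ideal theorem for distributive lattices provides a prime ideal containing $y$ but not $x$ (cf. \cite{BalbesDwinger}), so prime ideals separate points and $D_{Id}(x)=D_{Id}(y)$ forces $x=y$. Consequently $\lambda_A(\alpha)=b$, which shows that $\lambda_A|_{B(Con(A))}$ is surjective. Proposition 4.4 then gives the conclusion. Everything apart from this separation step is a routine transport along the homeomorphism $u$ and the identity $u(D_A(\alpha))=D_{Id}(\lambda_A(\alpha))$, so I expect no further obstacle.
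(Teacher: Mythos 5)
Your proof is correct, but it takes a genuinely different route from the paper's. The paper verifies condition (3) of Proposition 4.4: it starts from a compact congruence $\alpha$ with $\lambda_A(\alpha)\in B(L(A))$, lifts the complement of $\lambda_A(\alpha)$ to some $\beta\in K(A)$, deduces $\alpha\lor\beta=\nabla_A$ and $[\alpha,\beta]\subseteq\rho(\Delta_A)$ so that $D_A(\alpha)=V_A(\beta)$ is clopen in $Spec(A)$, then uses the hypothesis to produce $\gamma\in B(Con(A))$ with $D_A(\alpha)=D_A(\gamma)$, concludes $\lambda_A(\alpha)=\lambda_A(\gamma)$, and finally invokes Lemma 3.1(8) (together with the fact that $[\gamma,\gamma]=\gamma$ for complemented $\gamma$) to obtain $[\alpha,\alpha]^n=\gamma\in B(Con(A))$. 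You instead verify condition (2), surjectivity of $\lambda_A|_{B(Con(A))}$: starting from $b\in B(L(A))$ you pull the clopen $D_{Id}(b)$ back through the homeomorphism $u$ of Proposition 3.4, lift the resulting clopen of $Spec(A)$ to some $\alpha\in B(Con(A))$ by the hypothesis, and identify $\lambda_A(\alpha)=b$ via the injectivity of $x\mapsto D_{Id}(x)$ on a bounded distributive lattice (prime-ideal separation). The skeleton is the same in both arguments --- a clopen attached to a complemented element of $L(A)$ is realized by a Boolean congruence --- but your version works on the lattice spectrum and dispenses entirely with the commutator-power bookkeeping ($[\alpha,\alpha]^n$, Lemma 3.1(8)), at the price of invoking the Stone separation theorem for $L(A)$; the paper stays inside $Spec(A)$ and exhibits the required Boolean congruence explicitly as a commutator power of the given compact congruence, its step from $D_A(\alpha)=D_A(\gamma)$ to $\lambda_A(\alpha)=\lambda_A(\gamma)$ being exactly the algebra-side analogue of your separation step (the radical of a congruence, like a lattice element, is determined by the primes above it). Both proofs are sound; yours is somewhat shorter and arguably more transparent, while the paper's yields the slightly more informative conclusion (3) directly.
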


\begin{proof} Assume that $\alpha\in K(A)$ and $\lambda_A(\alpha)\in B(L(A))$, so there exists $\beta\in K(A)$ such that $\lambda_A(\alpha)\lor\lambda_A(\beta)=1$ and $\lambda_A(\alpha)\land\lambda_A(\beta)=0$. Thus $\lambda_A(\alpha\lor\beta)=1$ and $\lambda_A([\alpha,\beta])=0$, hence $\alpha\lor\beta=1$ and $[\alpha,\beta]\subseteq \rho(\Delta_A)$.

It follows that $D_A(\alpha)\cup D_A(\beta)=Spec(A)$ and $D_A(\alpha)\cap D_A(\beta)=\emptyset$, hence $D_A(\alpha)=V_A(\beta)$ is a clopen subset of $Spec(A)$. Since $D_A|_{B(Con(A))}$ is a bijection there exists $\gamma\in B(Con(A))$ such that $D_A(\alpha)=D_A(\gamma)$, so $\lambda_A(\alpha)=\lambda_A(\gamma)$. By using Lemma 3.1(8) we get $\gamma\subseteq [\alpha,\alpha]^n\subseteq\gamma$, for some integer $n\geq 0$, so $[\alpha,\alpha]^n=\gamma\in B(Con(A))$. By the condition $(3)$ of Proposition 4.4, the reticulation of $A$ preserves the Boolean center.

\end{proof}

\section{Congruence Boolean Lifting Property}

\hspace{0.5cm}The lifting idempotent property ($LIP$) was studied in \cite{a} in relationship with the clean and the exchange rings. Recently, similar lifting properties were studied for other algebraic structures: bounded distributive lattices \cite{Cheptea}, commutative residuated lattices \cite{GCM}, universal algebras \cite{GKM},etc.
For example, in \cite{GKM} we introduced the notion of Congruence Boolean Lifting Property ($CBLP$) for the semidegenerate congruence modular algebras (see \cite{Agliano}). $CBLP$ generalizes $LIP$, as well as all the Boolean lifting properties existing in literature.

In this section we continue the investigations of \cite{GKM} on congruences and algebras with $CBLP$. Our results on congruences with $CBLP$ generalise to some universal algebras the main theorems proven by A. Tarizadeh and P.K. Sharma in \cite{Tar4} for lifting idempotents modulo an ideal.

Let $A$ be an algebra in a variety $\mathcal{V}$, $\theta$ a congruence of $A$ and $p_{\theta}:A\rightarrow A/{\theta}$ the canonical surjective morphism associated with the congruence $\theta$. According to Section 2, one can consider the map $p_{\theta}^{\bullet}: Con(A)\rightarrow Con(A/{\theta})$ associated with $p_{\theta}$. By Remark 5.19 of \cite{GKM}, we have $p_{\theta}(\alpha)= (\alpha\lor \theta)/{\theta}$ for each $\alpha\in Con(A)$. In virtue of Corollary 5.17 of \cite{GKM}, the map $p_{\theta}^{\bullet}: Con(A)\rightarrow Con(A/{\theta})$ induces a Boolean morphism $B(p_{\theta}^{\bullet}): B(Con(A))\rightarrow B(Con(A/{\theta}))$.

\begin{definitie} \cite{GKM} We shall say that a congruence $\theta$ of the algebra $A$ fulfills the Congruence Boolean Lifting Property ($CBLP$) if the Boolean morphism $B(p_{\theta}^{\bullet}): B(Con(A))\rightarrow B(Con(A/{\theta}))$ is surjective. The algebra $A$ fulfills $CBLP$ if each congruence of $A$ has $CBLP$.
\end{definitie}

In other words, $\theta\in Con(A)$ has $CBLP$ if and only if for any $\beta\in B(Con(A/{\theta}))$ there exists $\alpha\in B(Con(A))$ such that $(\alpha\lor \theta)/{\theta}=\beta$.

We observe that a commutative ring $R$ has $CBLP$ iff $R$ has $LIP$ iff $R$ is a clean ring (the last equivalence is a Nicholson theorem from \cite{a}).

 Let $A$ be an algebra in a fixed semidegenerate congruence  modular variety $\mathcal{V}$ such that the set $K(A)$ of finitely generated congruences of $A$ is closed under the commutator operation and let $\theta$ be a congruence of $A$. We shall denote $[\theta)_A = \{\chi\in Con(A)|\theta\subseteq\chi\}$ and $X_{\theta}=V_A(\theta)=\{\phi\in Spec(A)|\theta\subseteq\phi\}$. By \cite{Burris} it is well-known that any congruence of $A/{\theta}$ has the form $\chi/{\theta}= \{(x/{\theta},y/{\theta})|(x,y)\in \chi\}$, for some congruence $\chi\in [\theta)_A$ and the map $s_{\theta}:Con(A/{\theta})\rightarrow [\theta)_A$, defined by $s_{\theta}(\chi/{\theta})=\chi$, is a lattice isomorphism. Thus for all $\chi,\varepsilon\in [\theta)_A$, $\chi/{\theta} =\chi/{\theta}$ if and only if $\chi=\varepsilon$. We also know that the map $t_{\theta}= s_{\theta}|_{Spec(A/{\theta})}:Spec(A/{\theta})\rightarrow X_{\theta}$ is a homeomorphism.

 We remark that $(D_A(\chi)\cap [\theta)_A)_{\chi\in [\theta)_A}$ is a basis of open sets for the topology of $X_{\theta}$ and the map $t_{\theta}$ = $s_{\theta}|_{Spec(A/{\theta})}:Spec(A/{\theta})\rightarrow X_{\theta}$ is a homeomorphism. It is clear from the Stone duality \cite{Burris} that the homeomorphism $t_{\theta}$ induces the Boolean isomorphism $t_{\theta}^{\ast}:Clop(Spec(A/{\theta}))\rightarrow Clop(X_{\theta})$, given by $t_{\theta}^{\ast}(U)= t_{\theta}[U]=\{t_{\theta}(\epsilon)|\epsilon\in U\}$.

 For the rest of the section we will suppose that for any  algebra $A$ of the variety $\mathcal{V}$, the reticulation of $A$ preserves the Boolean center.

 According to Theorem 4.9, one can consider the Boolean isomorphism

 $D_{A/{\theta}}: B(Con(A/{\theta}))\rightarrow Clop(Spec(A/{\theta}))$.

Thus $v_{\theta}=t_{\theta}^{\ast}\circ D_{A/{\theta}}:B(Con(A/{\theta}))\rightarrow Clop(X_{\theta})$ is a Boolean isomorphism.

\begin{lema}For any congruence $\chi$ of $A$ such that $\theta\subseteq\chi$ and $\chi/{\theta}\in B(Con(A/{\theta}))$ we have $v_{\theta}(\chi/{\theta})=D_A(\chi)\cap[\theta)_A$.
\end{lema}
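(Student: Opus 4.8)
The plan is to unfold the composite definition $v_{\theta}=t_{\theta}^{\ast}\circ D_{A/{\theta}}$ and reduce the claimed identity to an explicit description of a set of prime congruences, which I can then match against $D_A(\chi)\cap[\theta)_A$ directly. First I would note that the hypothesis $\chi/{\theta}\in B(Con(A/{\theta}))$ is exactly what places $\chi/{\theta}$ in the domain of the Boolean isomorphism $D_{A/{\theta}}$ of Theorem 4.9, so $v_{\theta}(\chi/{\theta})$ is defined; and by the general description of the $D$-map,
\[
D_{A/{\theta}}(\chi/{\theta}) = Spec(A/{\theta}) - V_{A/{\theta}}(\chi/{\theta}) = \{\psi\in Spec(A/{\theta}) \mid \chi/{\theta}\not\subseteq\psi\}.
\]
Applying the definition $t_{\theta}^{\ast}(U)=t_{\theta}[U]$ of the induced Boolean isomorphism then gives
\[
v_{\theta}(\chi/{\theta}) = t_{\theta}[D_{A/{\theta}}(\chi/{\theta})] = \{t_{\theta}(\psi) \mid \psi\in Spec(A/{\theta}),\ \chi/{\theta}\not\subseteq\psi\}.
\]

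Next I would invoke the structure of the homeomorphism $t_{\theta}=s_{\theta}|_{Spec(A/{\theta})}$. Every prime congruence $\psi$ of $A/{\theta}$ has the form $\psi=\phi/{\theta}$ for a unique $\phi\in[\theta)_A$, that $\phi$ is itself prime and lies in $X_{\theta}=V_A(\theta)$, and $t_{\theta}(\phi/{\theta})=s_{\theta}(\phi/{\theta})=\phi$. Substituting $\psi=\phi/{\theta}$ rewrites the set above as $\{\phi\in X_{\theta} \mid \chi/{\theta}\not\subseteq\phi/{\theta}\}$. Since $s_{\theta}$ is a lattice isomorphism it preserves and reflects inclusion, so $\chi/{\theta}\subseteq\phi/{\theta}$ if and only if $\chi\subseteq\phi$; hence $\chi/{\theta}\not\subseteq\phi/{\theta}$ is equivalent to $\chi\not\subseteq\phi$. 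Combining these observations yields
\[
v_{\theta}(\chi/{\theta}) = \{\phi\in Spec(A) \mid \theta\subseteq\phi \text{ and } \chi\not\subseteq\phi\}.
\]

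Finally I would identify this set with $D_A(\chi)\cap[\theta)_A$: by definition $D_A(\chi)=\{\phi\in Spec(A)\mid\chi\not\subseteq\phi\}$, and intersecting with $[\theta)_A$ retains exactly the prime congruences lying above $\theta$, i.e. those with $\theta\subseteq\phi$, which is precisely the displayed set. There is no deep obstacle here; the argument is essentially a bookkeeping exercise threading through the two isomorphisms $D_{A/{\theta}}$ and $t_{\theta}^{\ast}$. The one point that genuinely deserves care—and where I would be most explicit—is the translation between inclusions in $Con(A/{\theta})$ and inclusions in $[\theta)_A$ furnished by the lattice isomorphism $s_{\theta}$, together with the identification of $Spec(A/{\theta})$ with the primes of $A$ containing $\theta$ via $t_{\theta}$, since it is these two facts that convert the quotient-level condition $\chi/{\theta}\not\subseteq\psi$ into the base-level condition $\chi\not\subseteq\phi$ defining $D_A(\chi)$.
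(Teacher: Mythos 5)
Your proof is correct and follows essentially the same route as the paper's: unfold $v_{\theta}=t_{\theta}^{\ast}\circ D_{A/{\theta}}$, describe $D_{A/{\theta}}(\chi/{\theta})$ as the primes $\phi/{\theta}$ with $\phi\in Spec(A)\cap[\theta)_A$ and $\chi\not\subseteq\phi$, and apply $t_{\theta}$ to land on $D_A(\chi)\cap[\theta)_A$. The paper compresses this into a single chain of equalities, whereas you make explicit the order-reflecting property of $s_{\theta}$ and the prime correspondence, which is exactly the bookkeeping the paper leaves implicit.
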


\begin{proof} Assume that $\chi\in Con(A)$, $\theta\subseteq\chi$ and $\chi/{\theta}\in B(Con(A/{\theta}))$. Then the following equalities hold:

$v_{\theta}(\chi/{\theta})=(t_{\theta}^{\ast}\circ D_{A/{\theta}})(\chi/{\theta})=t_{\theta}^{\ast}(\{\phi/{\theta})|\phi\in Spec(A)\cap[\theta)_A,\chi\not\subseteq\phi\}$= $\{t_{\theta}(\phi/{\theta})|\phi\in D_A(\chi)\cap[\theta)_A\}=D_A(\chi)\cap[\theta)_A$.

\end{proof}

We remark that $X_{\theta}=X_{\rho(\theta)}$ so $Clop(X_{\theta})= Clop(X_{\rho(\theta)})$. Since $v_{\theta}$ and $v_{\rho(\theta)}$ are Boolean isomorphisms the following lemma holds:

\begin{lema} There exists a Boolean isomorphism $\omega:B(Con(A/{\rho(\theta)}))\rightarrow B(Con(A/{\theta}))$ such that the following diagram is commutative.

\begin{center}

\begin{picture}(150,70)

\put(-50,50){$B(Con(A/_{\rho(\theta)}))$}

\put(25,55){\vector(1,0){100}}

\put(70,60){$v_{\rho(\theta)}$}

\put(130,50){$Clop(X_{\rho(\theta)})$}

\put(5,45){\vector(0,-1){30}}

\put(-10,30){$\omega$}

\put(-50,0){$B(Con(A/_{\theta}))$}

\put(25,5){\vector(1,0){100}}

\put(70,10){$v_\theta$}

\put(130,0){$Clop(X_\theta$)}

\put(135,45){\vector(0,-1){30}}

\put(140,30){$id$}

\end{picture}

\end{center}

where $id$ is the identity map.

\end{lema}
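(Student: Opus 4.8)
The plan is to produce $\omega$ by a single formula, $\omega = v_\theta^{-1}\circ v_{\rho(\theta)}$, and then verify that this composite simultaneously is a Boolean isomorphism and makes the square commute. The whole argument hinges on the observation recorded immediately before the statement: since $X_\theta = V_A(\theta) = V_A(\rho(\theta)) = X_{\rho(\theta)}$ as \emph{sets} (a prime congruence lies above $\theta$ precisely when it lies above $\rho(\theta)$), the two Boolean algebras $Clop(X_\theta)$ and $Clop(X_{\rho(\theta)})$ are literally the same object, not merely abstractly isomorphic. This is what legitimizes placing the identity map on the right-hand edge of the diagram, and it is the point I would emphasize first.

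Granting this, both $v_{\rho(\theta)}:B(Con(A/{\rho(\theta)}))\rightarrow Clop(X_{\rho(\theta)})$ and $v_\theta:B(Con(A/{\theta}))\rightarrow Clop(X_\theta)$ are Boolean isomorphisms onto the common target $Clop(X_\theta)=Clop(X_{\rho(\theta)})$. I would therefore define $\omega = v_\theta^{-1}\circ v_{\rho(\theta)}$. As the composite of a Boolean isomorphism with the inverse of a Boolean isomorphism sharing the same codomain, $\omega$ is itself a Boolean isomorphism from $B(Con(A/{\rho(\theta)}))$ onto $B(Con(A/{\theta}))$, which is exactly the map required by the lemma.

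Commutativity of the square is then immediate from the definition. Because the right vertical arrow is the identity, the condition to check is $v_\theta\circ\omega = v_{\rho(\theta)}$; substituting $\omega = v_\theta^{-1}\circ v_{\rho(\theta)}$ yields $v_\theta\circ v_\theta^{-1}\circ v_{\rho(\theta)} = v_{\rho(\theta)}$, as needed. No further computation with individual congruences is necessary.

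I do not anticipate any real obstacle here: the content is entirely formal, resting on the fact that $v_\theta$ and $v_{\rho(\theta)}$ have already been constructed as Boolean isomorphisms (via Theorem 4.9 composed with the homeomorphisms $t_\theta$, $t_{\rho(\theta)}$ and the induced Stone-duality isomorphisms $t_\theta^{\ast}$, $t_{\rho(\theta)}^{\ast}$). The only place demanding genuine care is the justification that $Clop(X_\theta)$ and $Clop(X_{\rho(\theta)})$ coincide on the nose rather than up to isomorphism, since this is precisely what lets the right edge be $id$; once the set equality $X_\theta = X_{\rho(\theta)}$ is invoked, the remainder is the one-line composition argument above.
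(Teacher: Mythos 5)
Your proposal is correct and coincides with the paper's own (implicit) argument: the remark preceding the lemma notes exactly that $X_\theta = X_{\rho(\theta)}$, hence $Clop(X_\theta) = Clop(X_{\rho(\theta)})$ literally, and then $\omega$ is obtained by composing the Boolean isomorphisms $v_{\rho(\theta)}$ and $v_\theta^{-1}$. Your write-up simply makes explicit the formula $\omega = v_\theta^{-1}\circ v_{\rho(\theta)}$ and the trivial verification of commutativity that the paper leaves unstated.
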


\begin{teorema}
 For any congruence $\theta\in Con(A)$ the following are equivalent:
\usecounter{nr}
\begin{list}{(\arabic{nr})}{\usecounter{nr}}
\item $\theta$ has $CBLP$;
\item $\rho(\theta)$ has $CBLP$.
\end{list}
\end{teorema}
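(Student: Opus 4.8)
The plan is to read off $CBLP$ for $\theta$ and for $\rho(\theta)$ as the surjectivity of the Boolean morphisms $B(p_\theta^\bullet)\colon B(Con(A))\to B(Con(A/\theta))$ and $B(p_{\rho(\theta)}^\bullet)\colon B(Con(A))\to B(Con(A/\rho(\theta)))$ respectively (Definition 5.1), and then to show that these two morphisms differ only by the Boolean isomorphism $\omega\colon B(Con(A/\rho(\theta)))\to B(Con(A/\theta))$ furnished by Lemma 5.3. Concretely, I would prove the single identity
$$B(p_\theta^\bullet)=\omega\circ B(p_{\rho(\theta)}^\bullet).$$
Once this is in hand the equivalence is immediate: $\omega$ is a bijection, so composing with $\omega$ (respectively $\omega^{-1}$) preserves surjectivity, and hence $B(p_\theta^\bullet)$ is surjective if and only if $B(p_{\rho(\theta)}^\bullet)$ is, which is exactly $(1)\Leftrightarrow(2)$.

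To establish the boxed identity I would exploit that $v_\theta$ and $v_{\rho(\theta)}$ are injective (being Boolean isomorphisms) and that the commutativity of the diagram in Lemma 5.3, together with $X_\theta=X_{\rho(\theta)}$, gives $v_\theta\circ\omega=v_{\rho(\theta)}$. Therefore it suffices to verify
$$v_\theta\bigl(B(p_\theta^\bullet)(\alpha)\bigr)=v_{\rho(\theta)}\bigl(B(p_{\rho(\theta)}^\bullet)(\alpha)\bigr)$$
for every $\alpha\in B(Con(A))$: applying $v_\theta$ to both sides of the desired identity, rewriting the right-hand side via $v_\theta\circ\omega=v_{\rho(\theta)}$, and cancelling the injective $v_\theta$ then yields it. Since $B(p_\theta^\bullet)(\alpha)=(\alpha\lor\theta)/\theta$ lands in $B(Con(A/\theta))$ (and symmetrically for $\rho(\theta)$), Lemma 5.2 applies and computes the left-hand side as $D_A(\alpha\lor\theta)\cap[\theta)_A$ and the right-hand side as $D_A(\alpha\lor\rho(\theta))\cap[\rho(\theta))_A$.

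The only point of substance is the reconciliation of these two expressions, and it is where I expect all the real content to sit. Using $D_A(\alpha\lor\theta)=D_A(\alpha)\cup D_A(\theta)$ and the fact that $D_A(\theta)$ is disjoint from $V_A(\theta)=X_\theta$, the first intersection collapses to $D_A(\alpha)\cap X_\theta$; the identical computation for $\rho(\theta)$ gives $D_A(\alpha)\cap X_{\rho(\theta)}$. Because $X_\theta=X_{\rho(\theta)}$ (equivalently $V_A(\theta)=V_A(\rho(\theta))$, which is exactly the defining feature of the radical), the two subsets coincide and the identity follows. The main thing to be careful about is the bookkeeping across the three levels $Con(A)$, $[\theta)_A$ and the spectrum $X_\theta$, namely keeping track that the intersections produced by Lemma 5.2 are taken inside $X_\theta$; no genuine obstacle arises, precisely because $\theta$ and $\rho(\theta)$ determine the same closed subspace of $Spec(A)$.
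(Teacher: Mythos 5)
Your proposal is correct and follows the same route as the paper's (first) proof: both rest on Lemma 5.2, the isomorphism $\omega$ of Lemma 5.3, the equality $X_\theta = X_{\rho(\theta)}$, and the same core computation showing that $(\alpha\lor\theta)/\theta$ and $(\alpha\lor\rho(\theta))/\rho(\theta)$ determine the same clopen subset of the common spectrum. The only difference is organizational: you prove the single naturality identity $B(p_\theta^\bullet)=\omega\circ B(p_{\rho(\theta)}^\bullet)$ once and deduce both implications formally from the bijectivity of $\omega$, whereas the paper runs that computation twice inside two symmetric element-chasing arguments (the paper also records a second, purely algebraic proof via Proposition 5.5 and $\theta^{\ast}=(\rho(\theta))^{\ast}$, which differs from both).
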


\begin{proof}
$(1)\Rightarrow(2)$ Assume that $\theta$ has $CBLP$. Let $\chi$ be a congruence of $A$ such that $\rho(\theta)\subseteq\chi$ and $\chi/{\rho(\theta)}\in B(Con(A/{\theta}))$, hence $\omega(\chi/{\rho(\theta)})=\varepsilon/{\theta}$, for some congruence $\varepsilon$ such that $\theta\subseteq\varepsilon$. According to the commutative diagram of Lemma 5.3 the following equalities hold:

$D_A(\chi)\cap [\rho(\theta))_A = v_{\rho(\theta)}(\chi/{\rho(\theta)})=v_{\theta}(\omega(\chi/{\rho(\theta)}))=v_{\theta}(\varepsilon/{\theta})=D_A(\varepsilon)\cap[\theta)_A$.

Since $\omega$ is a Boolean isomorphism and $\chi/{\rho(\theta})\in B(Con(A/{\rho(\theta)}))$ we have $\varepsilon/{\theta}=\omega(\chi/{\rho(\theta)})\in B(Con(A/{\theta}))$. By applying the hypothesis that $\theta$ has $CBLP$ there exists $\alpha\in B(Con(A))$ such that $\varepsilon/{\chi}=p_{\theta}^{\bullet}(\alpha)=(\alpha\lor\theta)/{\theta}$, therefore $\varepsilon=\alpha\lor\theta$.

We want to show that $p_{\rho(\theta)}^{\bullet}(\alpha) = \chi/{\rho(\theta)}$, i.e. $(\alpha\lor\rho(\theta))/{\rho(\theta)}=\chi/{\rho(\theta)}$. Firstly we shall prove the equality $D_A(\alpha\lor\rho(\theta))\cap[\rho(\theta))_A = D_A(\chi)\cap[\rho(\theta))_A$. Assume that $\phi$ is a prime congruence of $A$, so it easy to see that the following equivalence holds: $\alpha\lor \rho(\theta)\not\subseteq \phi$ and $\rho(\theta)\subseteq \phi$ if and only if $\alpha\lor\theta\not\subseteq \phi$ and $\theta\subseteq \phi$. Thus we have the following equality: $D_A(\alpha\lor \rho(\theta))\cap [\rho(\theta))_A = D_A(\alpha\lor \theta)\cap [\theta)_A)$.

On the other hand, $D_A(\alpha\lor \theta)\cap [\theta)_A)=  D_A(\varepsilon)\cap [\theta)_A)= D_A(\chi)\cap[\rho(\theta))_A$, so $D_A(\alpha\lor\rho(\theta))\cap[\rho(\theta))_A = D_A(\chi)\cap[\rho(\theta))_A$. This last equality can be written under the form $v_{\rho(\theta)}((\alpha\lor\rho(\theta))/{\rho(\theta)})=v_{\rho(\theta)}(\chi/{\rho(\theta)})$. Since $v_{\rho(\theta)}$ is a Boolean isomorphism and  $(\alpha\lor\rho(\theta))/{\rho(\theta)}, \chi/{\rho(\theta)}\in B(Con(A/{\rho(\theta)}))$  we get $(\alpha\lor\rho(\theta))/{\rho(\theta)}=\chi/{\rho(\theta)}$. Then $p_{\rho(\theta)}^{\bullet}(\alpha) = \chi/{\rho(\theta)}$, so we conclude that $\rho(\theta)$ has $CBLP$.

$(2)\Rightarrow(1)$ Assume now that $\rho(\theta)$ has $CBLP$. Let $\varepsilon$ be a congruence of $A$ such that $\theta\subseteq\varepsilon$ and $\varepsilon/{\theta}\in B(Con(A/{\theta}))$. Since $\omega$ is a Boolean isomorphism and $\varepsilon/{\theta}\in B(Con(A/{\theta}))$ there exists $\chi\in Con(A)$ such that $\rho(\theta)\subseteq\chi$ such that $\chi/{\rho(\theta)}\in B(Con(A/{\rho(\theta)}))$ and $\omega(\chi/{\rho(\theta)})=\varepsilon/{\theta}$. As in the proof of implication $(1)\Rightarrow(2)$, by applying Lemma 5.3 one gets the equality $D_A(\chi)\cap [\theta)_A= D_A(\varepsilon)\cap [\theta)_A$. Since $\rho(\theta)$ has $CBLP$ and $\chi/{\rho(\theta)}\in B(Con(A/{\rho(\theta)}))$ there exists $\alpha\in B(Con(A))$ such that $\chi/{\rho(\theta)}=(\alpha\lor\rho(\theta))/{\rho(\theta})$, hence $\chi=\alpha\lor\rho(\theta)$.

For any $\phi\in Spec(A)$ we have $\alpha\lor \theta\subseteq\phi$  if and only if $\alpha\lor\rho(\theta)\subseteq\phi$. Thus $D_A(\alpha\lor\theta)\cap[\theta)_A= D_A(\alpha\lor\rho(\theta))\cap [\theta)_A=D_A(\chi)\cap [\theta)_A=D_A(\varepsilon)\cap [\theta)_A$, so $v_{\theta}((\alpha\lor\theta)/{\theta})=D_A(\alpha\lor\theta)\cap[\theta)_A=D_A(\varepsilon)\cap [\theta)_A=v_{\theta}(\varepsilon/{\theta})$. Since $v_{\theta}$ is a Boolean isomorphism and $(\alpha\lor\theta)/{\theta},\varepsilon/{\theta}\in B(Con(A/{\theta}))$ it follows that $p_{\theta}^{\bullet}(\alpha)=(\alpha\lor\theta)/{\theta}=\varepsilon/{\theta}$. Then $p_{\theta}^{\bullet}$ is surjective, so $\theta$ has $CBLP$.

\end{proof}

The previous theorem generalizes Proposition 3.1 of \cite{Tar4}. We observe that the proof of Theorem 5.4 is based on Theorem 4.9, which is a topological result. In what follows we shall present a short purely algebraic of Theorem 5.4 by using a result of \cite{GKM} concerning the way in which the reticulation preserves the Boolean lifting properties.

Let $L$ be a bounded distributive lattice. Following \cite{Cheptea} we say that an ideal $I$ of $L$ has $Id-BLP$ if for any $y\in B(L/I)$ there exists $x\in B(L)$ such that $x/I = y$. The lattice $L$ has $Id-BLP$ if any ideal of $L$ has $Id-BLP$.

\begin{propozitie} Assume that $A$ is an algebra of $\mathcal{V}$ and $\theta$ is a congruence of $A$. Then $\theta$ has $CBLP$ if and only if the ideal $\theta^{\ast}$ of the lattice $L(A)$ has $Id-BLP$. The algebra $A$ has $CBLP$ if and only if the lattice $L(A)$ has $Id-BLP$.
\end{propozitie}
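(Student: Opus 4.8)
The plan is to transfer the lifting property along the reticulation correspondence, using the order-preserving maps $(\cdot)^{\ast}$ and $(\cdot)_{\ast}$ to translate between congruences of $A$ above $\theta$ and ideals of $L(A)$ above $\theta^{\ast}$, and between their respective Boolean centers. The key organizing principle is that the reticulation of $A$ preserves the Boolean center (a standing assumption in this part of the paper), so that $B(Con(A))$ and $B(L(A))$ are identified via $\lambda_A$, and likewise $B(Con(A/\theta))$ and $B(L(A/\theta))$ via $\lambda_{A/\theta}$. The statement then says essentially that the surjectivity of $B(p_\theta^{\bullet})$ is equivalent to the surjectivity of the corresponding quotient map on Boolean centers at the lattice level.

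First I would establish the appropriate identification of the reticulation of the quotient. Since $\theta^{\ast}$ is an ideal of $L(A)$, one expects a natural lattice isomorphism $L(A/\theta)\cong L(A)/\theta^{\ast}$ compatible with the surjections $\lambda$; this is the lattice-theoretic analogue of the correspondence $\chi\mapsto\chi/\theta$ for congruences above $\theta$, and it is exactly the content needed so that the quotient-by-$\theta^{\ast}$ construction on $L(A)$ matches the reticulation of $A/\theta$. I would verify that under this isomorphism the canonical map $L(A)\to L(A)/\theta^{\ast}$ corresponds to the map induced by $p_\theta$ on reticulations. Next I would set up the square relating the four Boolean morphisms: the map $B(p_\theta^{\bullet})\colon B(Con(A))\to B(Con(A/\theta))$, the quotient map $B(L(A))\to B(L(A)/\theta^{\ast})$ sending $x\mapsto x/\theta^{\ast}$, and the two vertical Boolean isomorphisms $\lambda_A|_{B(Con(A))}$ and $\lambda_{A/\theta}|_{B(Con(A/\theta))}$ supplied by Proposition 4.4 (these are isomorphisms precisely because the reticulations preserve the Boolean center). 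Once this square is shown to commute, the equivalence of the two surjectivity statements is immediate, and the first assertion of the proposition follows; the second assertion (the global $CBLP$ versus $Id$-$BLP$) then follows by quantifying over all $\theta\in Con(A)$, using that every ideal of $L(A)$ is of the form $\theta^{\ast}$ for some $\theta$ (indeed $\theta = I_{\ast}$ recovers $I$ via Lemma 3.4(2)).

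The step I expect to be the main obstacle is verifying commutativity of the square, i.e.\ checking that for $\alpha\in B(Con(A))$ one has $\lambda_{A/\theta}\bigl((\alpha\lor\theta)/\theta\bigr) = \lambda_A(\alpha)/\theta^{\ast}$ under the identification $L(A/\theta)\cong L(A)/\theta^{\ast}$. This requires relating $\lambda_{A/\theta}$ applied to the image congruence with $\lambda_A$ followed by the lattice quotient, which in turn rests on compatibility of the radical operations $\rho_{A/\theta}$ and $\rho_A$ with the quotient by $\theta$, together with Lemma 3.4(3) ($\theta^{\ast} = (\rho(\theta))^{\ast}$) and the fact that $p_\theta^{\bullet}(\alpha) = (\alpha\lor\theta)/\theta$ from Remark 5.19 of \cite{GKM}. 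The care needed is that a complemented congruence of $A/\theta$ need only be the image of a compact congruence above $\theta$, so one must pass through the reticulation to recover a genuinely complemented element, which is exactly where the preservation-of-Boolean-center hypothesis does the work. Since all the constituent maps were already shown to be Boolean morphisms and the vertical maps are isomorphisms, once commutativity is in hand the proposition is a formal diagram-chase, and I would close by noting that the second statement is just the first applied uniformly over all congruences.
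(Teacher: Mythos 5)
Your proposal is correct, but it is organized quite differently from the paper's own proof, which contains essentially no argument at all: the paper simply observes that the standing hypothesis (every algebra of $\mathcal{V}$ has a reticulation preserving the Boolean center) is satisfied and then invokes Lemma 5.25 of \cite{GKM}, which is precisely the statement that under this hypothesis the reticulation transfers Boolean lifting properties between congruences and lattice ideals. What you have written is, in effect, a reconstruction of the proof of that cited lemma: the commutative square relating $B(p_{\theta}^{\bullet})\colon B(Con(A))\rightarrow B(Con(A/{\theta}))$ to the quotient map $B(L(A))\rightarrow B(L(A)/{\theta^{\ast}})$, with vertical isomorphisms $\lambda_A|_{B(Con(A))}$ and $\lambda_{A/{\theta}}|_{B(Con(A/{\theta}))}$ supplied by Proposition 4.4 (applied to both $A$ and $A/{\theta}$, which is legitimate since $A/{\theta}\in\mathcal{V}$). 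The two ingredients you lean on without proof --- the isomorphism $L(A/{\theta})\cong L(A)/{\theta^{\ast}}$ compatible with $p_{\theta}$, and the commutativity of the square --- are exactly what the paper elsewhere cites as Proposition 7.6 of \cite{GM2} (see the proofs of Lemma 6.2 and Propositions 6.5 and 6.8), so your plan closes modulo results the paper already treats as known. The trade-off: the paper's proof is a one-line delegation to the literature, while yours makes the transfer mechanism explicit and self-contained, at the cost of having to verify the quotient-reticulation isomorphism and the square, which you correctly flag as the main obstacle but do not carry out. Your handling of the second assertion (quantifying over congruences and using $(I_{\ast})^{\ast}=I$ to realize every ideal as some $\theta^{\ast}$) is exactly right; note only that this identity is Lemma 3.3(2) in the paper's numbering rather than 3.4(2), though the paper itself commits the same off-by-one slip in Section 3.
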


\begin{proof}
By hypothesis, for any  algebra $A$ of variety $\mathcal{V}$, the reticulation of $A$ preserves the Boolean center, so one can apply Lemma 5.25 of \cite{GKM}.
\end{proof}

In particular, a commutative ring $R$ is a clean ring if and only if the lattice $L(R)$ has $Id-BLP$.

{\it{Second proof of Theorem 5.4}}. According to Lemma 3.3(3) we have $\theta^{\ast}=(\rho(\theta))^{\ast}$. By applying Proposition 5.5 the following equivalences hold: the congruence $\theta$ has $CBLP$ iff the ideal $\theta^{\ast}$ of the lattice $L(A)$ has $Id-BLP$ iff the ideal $(\rho(\theta))^{\ast}$ of the lattice $L(A)$ has $Id-BLP$ iff the congruence $\rho(\theta)$ has $CBLP$.

\begin{corolar}
 Let $I$ be an ideal of the lattice $L(A)$. Then $I$ has $Id-BLP$ if and only if the congruence $I_{\ast}$ of $A$ has $CBLP$.
\end{corolar}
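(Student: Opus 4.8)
The plan is to obtain this corollary as an immediate consequence of Proposition 5.5 together with the adjunction identity of Lemma 3.3(2). Since Proposition 5.5 characterizes $CBLP$ of a congruence in terms of $Id$-$BLP$ of its associated ideal, the natural move is to apply it to the specific congruence $\theta = I_{\ast}$ and then translate back using the fact that the operators $(\cdot)^{\ast}$ and $(\cdot)_{\ast}$ are essentially inverse to one another on the relevant objects.

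Concretely, I would proceed as follows. First I would set $\theta = I_{\ast}$, which is a congruence of $A$ (indeed a radical one, by Lemma 3.3(3), since $\rho(I_{\ast}) = I_{\ast}$). By Proposition 5.5 applied to this congruence, the congruence $I_{\ast}$ has $CBLP$ if and only if the ideal $(I_{\ast})^{\ast}$ of the lattice $L(A)$ has $Id$-$BLP$. Next I would invoke Lemma 3.3(2), whose second identity states precisely that $(I_{\ast})^{\ast} = I$. Substituting this equality into the previous equivalence yields at once that $I_{\ast}$ has $CBLP$ if and only if $I$ has $Id$-$BLP$, which is the desired statement.

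The argument is a short chaining of two already-established results, so I do not expect a genuine obstacle. The only point deserving a moment's care is that the input to Proposition 5.5 must be an arbitrary congruence of $A$, and here that congruence is $I_{\ast}$; one should confirm that the standing hypothesis of the section (that the reticulation of every algebra of $\mathcal{V}$ preserves the Boolean center), under which Proposition 5.5 was proved, is indeed in force, so that Proposition 5.5 applies without restriction to $\theta = I_{\ast}$. Once this is noted, the equivalence $(I_{\ast})^{\ast} = I$ from Lemma 3.3(2) closes the proof immediately, and no further computation is required.
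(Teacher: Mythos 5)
Your proposal is correct and follows exactly the paper's own argument: apply Proposition 5.5 to the congruence $\theta = I_{\ast}$ and then use the identity $(I_{\ast})^{\ast} = I$ from Lemma 3.3(2) to conclude. Your added remark about verifying the standing hypothesis (that the reticulation preserves the Boolean center) is a sensible precaution but introduces nothing beyond what the paper does.
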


\begin{proof}
Recall from Lemma 3.3(2) that $I= (I_{\ast})^{\ast}$, so by applying Proposition 5.5 the following equivalences hold: $I$ has $Id-BLP$ iff $(I_{\ast})^{\ast}$ has $Id-BLP$ iff $I_{\ast}$ has $CBLP$.
\end{proof}

\begin{corolar}
 If $\theta$ and $\chi$ are two congruences of $A$ such that $\rho(\theta)=\rho(\chi)$, then $\theta$ has $CBLP$ if and only if $\chi$ has $CBLP$.
\end{corolar}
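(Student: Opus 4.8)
The plan is to reduce the statement to Theorem 5.4, which already establishes that a congruence and its radical have the same lifting behaviour. Since $\rho(\theta) = \rho(\chi)$ by hypothesis, the two congruences share a common radical, and so the equivalence should follow by chaining Theorem 5.4 through this shared object.

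Concretely, I would first apply Theorem 5.4 to $\theta$ to obtain that $\theta$ has $CBLP$ if and only if $\rho(\theta)$ has $CBLP$. Next I would apply the same theorem to $\chi$, yielding that $\chi$ has $CBLP$ if and only if $\rho(\chi)$ has $CBLP$. The hypothesis $\rho(\theta) = \rho(\chi)$ then forces the two right-hand conditions to be literally the same statement, and transitivity of the biconditionals closes the argument. No topological machinery is invoked directly here; everything is absorbed into the prior theorem.

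An alternative route, slightly more transparent, passes through the reticulation. By Lemma 3.3(3) one has $\theta^{\ast} = (\rho(\theta))^{\ast}$ and $\chi^{\ast} = (\rho(\chi))^{\ast}$, so the assumption $\rho(\theta) = \rho(\chi)$ gives the equality of ideals $\theta^{\ast} = \chi^{\ast}$ in the lattice $L(A)$. Proposition 5.5 then translates $CBLP$ for each congruence into $Id$-$BLP$ for the corresponding ideal; since these ideals coincide, the two $CBLP$ conditions are equivalent. This version makes clear that the common radical is exactly what guarantees a common image under $(\cdot)^{\ast}$.

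There is no genuine obstacle to anticipate: the corollary is a direct consequence of results already proven, and the only point requiring care is invoking Lemma 3.3(3) (or, equivalently, the hypothesis of the running section that the reticulation of $A$ preserves the Boolean center, under which Proposition 5.5 and Theorem 5.4 hold). I would therefore keep the proof to a single short line, choosing whichever of the two phrasings best fits the surrounding exposition.
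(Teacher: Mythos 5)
Your primary argument---chaining Theorem 5.4 through the common radical $\rho(\theta)=\rho(\chi)$---is exactly the paper's own proof of this corollary, and it is correct. Your alternative route via Lemma 3.3(3) and Proposition 5.5 is also sound (it mirrors the paper's ``second proof'' of Theorem 5.4), but no comparison is needed since the main line coincides with the published argument.
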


\begin{proof}
By using Theorem 5.4 we have the following equivalences: $\theta$ has $CBLP$ iff $\rho(\theta)$ has $CBLP$ iff $\rho(\chi)$ has $CBLP$ iff $\chi$ has $CBLP$.
\end{proof}

\begin{corolar}
If $\theta$ and $\chi$ are two congruences of $A$ such that $\lambda_A(\theta)=\lambda_A(\chi)$, then $\theta$ has $CBLP$ if and only if $\chi$ has $CBLP$.
\end{corolar}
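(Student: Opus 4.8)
The plan is to reduce the statement to Corollary 5.8, whose hypothesis is the equality of radicals $\rho(\theta)=\rho(\chi)$. So the whole task amounts to checking that the hypothesis $\lambda_A(\theta)=\lambda_A(\chi)$ is equivalent to (or at least implies) $\rho(\theta)=\rho(\chi)$; once this is in hand, Corollary 5.8 delivers that $\theta$ has $CBLP$ if and only if $\chi$ has $CBLP$, and there is nothing further to prove.

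The first step is therefore to translate the condition on reticulation images into a condition on radicals. When $\theta,\chi\in K(A)$ this is immediate from the remark following the construction of $\lambda_A$: for compact congruences $\lambda_A(\alpha)=\lambda_A(\beta)$ holds if and only if $\rho(\alpha)=\rho(\beta)$. For arbitrary congruences the natural object attached to $\theta$ by the reticulation is the ideal $\theta^{\ast}$ of $L(A)$, and I would use Lemma 3.3 to relate it to the radical: by Lemma 3.3(2) we have $(\theta^{\ast})_{\ast}=\rho(\theta)$, and by Lemma 3.3(3) we have $\theta^{\ast}=(\rho(\theta))^{\ast}$. Consequently $\theta^{\ast}=\chi^{\ast}$ forces $\rho(\theta)=(\theta^{\ast})_{\ast}=(\chi^{\ast})_{\ast}=\rho(\chi)$, while conversely $\rho(\theta)=\rho(\chi)$ gives $\theta^{\ast}=(\rho(\theta))^{\ast}=(\rho(\chi))^{\ast}=\chi^{\ast}$. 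Thus equality of the reticulation data is exactly equality of radicals.

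Combining the two steps, the hypothesis $\lambda_A(\theta)=\lambda_A(\chi)$ yields $\rho(\theta)=\rho(\chi)$, and an application of Corollary 5.8 finishes the argument. I expect essentially no serious obstacle here: the result is a routine consequence of Corollary 5.8, and the only point requiring care is notational, namely fixing the meaning of $\lambda_A$ on congruences that need not be compact and confirming, via Lemma 3.3, that the reticulation separates congruences only up to their radicals.
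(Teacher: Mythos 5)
Your proposal is correct and is exactly the argument the paper intends: the corollary is stated there without proof because, once the hypothesis $\lambda_A(\theta)=\lambda_A(\chi)$ is unwound into $\rho(\theta)=\rho(\chi)$ (by the definition of the equivalence $\equiv$, or, as you do for non-compact congruences, via Lemma 3.3 and the ideals $\theta^{\ast}$, $\chi^{\ast}$), it is an immediate application of the preceding corollary on radicals. The only slip is numerical: that radical corollary is Corollary 5.7 in the paper, not Corollary 5.8 (the statement you are proving is itself Corollary 5.8).
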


\begin{corolar}
Any congruence $\theta$ of $A$ contained in $\rho(\Delta_A)$ has $CBLP$.
\end{corolar}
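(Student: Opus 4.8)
The plan is to reduce the statement to the obvious fact that $\Delta_A$ has $CBLP$, exploiting the radical-invariance of $CBLP$ proved earlier. First I would observe that the hypothesis $\theta\subseteq\rho(\Delta_A)$ forces $\rho(\theta)=\rho(\Delta_A)$. Indeed, from $\Delta_A\subseteq\theta$ and the fact that $\rho$ is order-preserving one gets $\rho(\Delta_A)\subseteq\rho(\theta)$; conversely, from $\theta\subseteq\rho(\Delta_A)$, monotonicity together with the idempotency $\rho(\rho(\Delta_A))=\rho(\Delta_A)$ (Lemma 2.5(5)) yields $\rho(\theta)\subseteq\rho(\rho(\Delta_A))=\rho(\Delta_A)$. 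The two inclusions give the desired equality $\rho(\theta)=\rho(\Delta_A)$.

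Once $\rho(\theta)=\rho(\Delta_A)$ is established, I would invoke Corollary 5.8 with $\chi=\Delta_A$: since $\theta$ and $\Delta_A$ have the same radical, $\theta$ has $CBLP$ if and only if $\Delta_A$ has $CBLP$. It then remains only to check that $\Delta_A$ has $CBLP$, which is immediate. The canonical morphism $p_{\Delta_A}:A\rightarrow A/{\Delta_A}$ is an isomorphism, so every $\beta\in B(Con(A/{\Delta_A}))$ can be written as $\varepsilon/{\Delta_A}$ with $\varepsilon\in B(Con(A))$, and then $(\varepsilon\lor\Delta_A)/{\Delta_A}=\varepsilon/{\Delta_A}=\beta$; hence $B(p_{\Delta_A}^{\bullet})$ is surjective. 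Combining the two steps gives that $\theta$ has $CBLP$.

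I do not expect a genuine obstacle here, since the entire content is carried by the reduction $\rho(\theta)=\rho(\Delta_A)$ and by Corollary 5.8. As a purely lattice-theoretic alternative, I could instead pass through the reticulation: by Lemma 3.1(6) every $\alpha\in K(A)$ with $\alpha\subseteq\theta\subseteq\rho(\Delta_A)$ satisfies $\lambda_A(\alpha)=0$, so $\theta^{\ast}=\{0\}$ is the zero ideal of $L(A)$, which trivially has $Id$-$BLP$; Proposition 5.5 then gives that $\theta$ has $CBLP$. The only point requiring a little care in either route is justifying that the relevant bottom object ($\Delta_A$, respectively the zero ideal) does lift Boolean elements, and this is trivial because the associated quotient map is an isomorphism.
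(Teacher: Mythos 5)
Your proof is correct and follows essentially the same route as the paper: establish $\rho(\theta)=\rho(\Delta_A)$ and then transfer $CBLP$ from $\Delta_A$ via the radical-invariance corollary. One small citation slip: the equivalence you invoke (equal radicals imply equivalent $CBLP$) is Corollary 5.7, not 5.8, though the argument is unaffected.
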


\begin{proof} If the congruence $\theta$ of $A$ is contained in $\rho(\Delta_A)$ then $\rho(\theta)=\rho(\Delta_A)$. Since $\Delta_A$ has $CBLP$, by applying Corollary 5.7 it follows that $\theta$ has $CBLP$.

\end{proof}

\begin{lema}
 For all congruences $\theta,\varepsilon\in Con(A)$ such $\theta\subseteq\varepsilon$ the following hold:
\usecounter{nr}
\begin{list}{(\arabic{nr})}{\usecounter{nr}}
\item In $A/{\theta}$ we have $(\varepsilon/{\theta})^{\perp}=(\varepsilon\rightarrow\theta)/{\theta}$;
\item $\varepsilon/{\theta}\in B(Con(A/{\theta}))$ if and only if $\varepsilon\lor(\varepsilon\rightarrow\theta)=\nabla_A$.
\end{list}
\end{lema}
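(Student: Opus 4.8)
The plan is to handle the two statements in order, systematically reducing everything inside the quotient $A/\theta$ to computations in $A$ by means of the lattice isomorphism $s_\theta:Con(A/\theta)\to[\theta)_A$ together with the commutator transport formula (2.3). For (1) I would start from the definition of the negation in $Con(A/\theta)$: since $\Delta_{A/\theta}=\theta/\theta$, we have $(\varepsilon/\theta)^{\perp}=(\varepsilon/\theta)\rightarrow\Delta_{A/\theta}=\bigvee\{\gamma/\theta\mid\gamma\in[\theta)_A,\ [\varepsilon/\theta,\gamma/\theta]_{A/\theta}\subseteq\theta/\theta\}$, using that every congruence of $A/\theta$ has the form $\gamma/\theta$ with $\gamma\supseteq\theta$. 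Because $\varepsilon,\gamma\supseteq\theta$, formula (2.3) gives $[\varepsilon/\theta,\gamma/\theta]_{A/\theta}=([\varepsilon,\gamma]\lor\theta)/\theta$, so the defining condition $[\varepsilon/\theta,\gamma/\theta]_{A/\theta}\subseteq\theta/\theta$ is equivalent, by the injectivity of $s_\theta$, to $[\varepsilon,\gamma]\subseteq\theta$ in $A$.

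Next I would invoke the residuation property of $Con(A)$: $[\varepsilon,\gamma]\subseteq\theta$ holds if and only if $\gamma\subseteq\varepsilon\rightarrow\theta$. Note that $\varepsilon\rightarrow\theta$ itself lies in $[\theta)_A$, since $[\varepsilon,\theta]\subseteq\varepsilon\cap\theta\subseteq\theta$ forces $\theta\subseteq\varepsilon\rightarrow\theta$; hence $\varepsilon\rightarrow\theta$ is the largest element of the index set $\{\gamma\in[\theta)_A\mid[\varepsilon,\gamma]\subseteq\theta\}$. Since $s_\theta$ is an isomorphism of complete lattices, its inverse preserves arbitrary joins, so the supremum above equals $(\varepsilon\rightarrow\theta)/\theta$. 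This yields (1).

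For (2) I would apply the complementation criterion recorded in Section~4 — a congruence is complemented exactly when its join with its annihilator equals $\nabla$ — in the algebra $A/\theta$, which again lies in the semidegenerate congruence modular variety $\mathcal{V}$. Thus $\varepsilon/\theta\in B(Con(A/\theta))$ if and only if $(\varepsilon/\theta)\lor(\varepsilon/\theta)^{\perp}=\nabla_{A/\theta}$. Substituting (1) and using that $s_\theta$ carries joins in $Con(A/\theta)$ to joins in $[\theta)_A$, the left side becomes $(\varepsilon\lor(\varepsilon\rightarrow\theta))/\theta$, while $\nabla_{A/\theta}=\nabla_A/\theta$. By the injectivity of $s_\theta$ this equality is equivalent to $\varepsilon\lor(\varepsilon\rightarrow\theta)=\nabla_A$, giving (2).

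The only real care needed is bookkeeping: correctly identifying $\Delta_{A/\theta}$ with $\theta/\theta$ and $\nabla_{A/\theta}$ with $\nabla_A/\theta$, checking that $\varepsilon\rightarrow\theta$ lies above $\theta$ so that it is an admissible representative, and justifying that $s_\theta$, being a complete-lattice isomorphism, commutes with the arbitrary join defining the negation. None of these presents a genuine obstacle; the heart of the argument is the single transport step via (2.3) that converts the quotient commutator condition $[\varepsilon/\theta,\gamma/\theta]_{A/\theta}\subseteq\theta/\theta$ into $[\varepsilon,\gamma]\subseteq\theta$, after which residuation in $A$ does the rest.
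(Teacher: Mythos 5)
Your proof is correct, and it differs from the paper's in an interesting way on part (1): the paper does not prove (1) at all, but simply cites it from the reference \cite{GM4} (a paper listed as ``in preparation''), whereas you derive it from first principles --- unfolding the definition of the annihilator in $Con(A/\theta)$, using the correspondence theorem to write every congruence of $A/\theta$ as $\gamma/\theta$ with $\gamma\supseteq\theta$, transporting the commutator condition via (2.3) (which, since $\varepsilon,\gamma\supseteq\theta$, reads $[\varepsilon/\theta,\gamma/\theta]_{A/\theta}=([\varepsilon,\gamma]\lor\theta)/\theta$), and then applying residuation in $Con(A)$ together with the observation that $\varepsilon\rightarrow\theta$ is the largest admissible $\gamma$ and lies above $\theta$. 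All of these steps are sound; in particular your check that $\theta\subseteq\varepsilon\rightarrow\theta$ (needed so that $\varepsilon\rightarrow\theta$ is a legitimate representative in $[\theta)_A$) closes the one gap that a careless version of this argument would leave. Your part (2) is essentially identical to the paper's: both apply the criterion $\alpha\in B(Con(A/\theta))$ iff $\alpha\lor\alpha^{\perp}=\nabla_{A/\theta}$, substitute (1), and conclude via the isomorphism $s_\theta$. The net effect is that your write-up is more self-contained than the paper's, replacing an appeal to an unpublished manuscript by a short direct computation, at the cost of a few extra lines.
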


\begin{proof}

$(1)$ The equality $(\varepsilon/{\theta})^{\perp}=(\varepsilon\rightarrow\theta)/{\theta}$ was proven in \cite{GM4}.

$(2)$ We know that $\varepsilon/{\theta}\in B(Con(A/{\theta})$ iff $\varepsilon/{\theta}\lor (\varepsilon/{\theta})^{\perp}=  \nabla_{A/{\theta}}$, hence by using $(1)$, it follows that the following equivalences hold: $\varepsilon/{\theta}\in B(Con(A/{\theta}))$ iff $\varepsilon/{\theta}\lor (\varepsilon\rightarrow\theta)/{\theta}= \nabla_{A/{\theta}}$ iff $(\varepsilon\lor(\varepsilon\rightarrow\theta))/{\theta}=\nabla_{A/{\theta}}$ iff $\varepsilon\lor(\varepsilon\rightarrow\theta)=\nabla_A$.

\end{proof}

\begin{remarca}Assume that $\theta,\varepsilon\in Con(A)$ and $\theta\subseteq\varepsilon$. By Lemma 5.10 and the commutator property $(2.3)$ the following equalities hold: $([\varepsilon,\varepsilon\rightarrow\theta])\lor\theta)/{\theta} = ([\varepsilon/{\theta},(\varepsilon\rightarrow\theta)/{\theta}] = [\varepsilon/{\theta},(\varepsilon/{\theta})^{\perp}]= \Delta_{A/{\theta}} = \theta/{\theta}$, hence $[\varepsilon,\varepsilon\rightarrow \theta]\lor \theta=\theta$, therefore $[\varepsilon,\varepsilon\rightarrow\theta]\subseteq\theta$.
\end{remarca}

\begin{teorema} Assume that $\theta,\chi$ are two congruences of $A$ such that $\theta\subseteq\chi$ and $Max(A)\cap [\theta)_A =Max(A)\cap [\chi)_A $. If $\chi$ has $CBLP$ then $\theta$ has $CBLP$.
\end{teorema}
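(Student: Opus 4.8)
The plan is to reformulate both occurrences of $CBLP$ as surjectivity statements about clopen subsets, using the Boolean isomorphisms $v_\theta$ and $v_\chi$, and then to exploit that $\theta$ and $\chi$ have the same closed points in their respective spectra. Write $X_\theta = V_A(\theta)$, $X_\chi = V_A(\chi)$ and $Y = Max(A)\cap X_\theta = Max(A)\cap X_\chi$; the hypothesis $Max(A)\cap[\theta)_A = Max(A)\cap[\chi)_A$ is exactly the equality defining $Y$. Since $\theta\subseteq\chi$ we have $Y\subseteq X_\chi\subseteq X_\theta$, and $Y$ is precisely the set of closed points of each of the spectral spaces $X_\theta$ and $X_\chi$ (the closed points of $Spec(A)$ are the maximal congruences, as $\overline{\{\phi\}}=V_A(\phi)$). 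For $\alpha\in B(Con(A))$ the element $(\alpha\lor\theta)/{\theta}$ lies in $B(Con(A/{\theta}))$, so by Lemma 5.2, together with the observation that for $\phi\in X_\theta$ one has $\alpha\lor\theta\subseteq\phi$ iff $\alpha\subseteq\phi$, I would record $v_\theta((\alpha\lor\theta)/{\theta}) = D_A(\alpha)\cap X_\theta$, and likewise for $\chi$. As $v_\theta$ and $v_\chi$ are Boolean isomorphisms, $\theta$ (resp.\ $\chi$) has $CBLP$ if and only if the map $B(Con(A))\rightarrow Clop(X_\theta)$, $\alpha\mapsto D_A(\alpha)\cap X_\theta$ (resp.\ $\alpha\mapsto D_A(\alpha)\cap X_\chi$), is surjective.

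The crucial step is an injectivity lemma: the restriction map $Clop(X_\theta)\rightarrow Clop(Y)$, $U\mapsto U\cap Y$, is injective. To prove this it suffices to show that any nonempty clopen $C\subseteq X_\theta$ meets $Y$, for then $U\cap Y=U'\cap Y$ gives $C:=U\triangle U'$ clopen with $C\cap Y=\emptyset$, forcing $C=\emptyset$ and $U=U'$. Now if $\phi\in C$, then as recalled in Section 2 the proper congruence $\phi$ lies below some maximal congruence $m$; since $\theta\subseteq\phi\subseteq m$ we get $m\in Y$, and $m\in V_A(\phi)=\overline{\{\phi\}}\subseteq C$ because $C$ is closed. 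Hence $C\cap Y\neq\emptyset$, as required. The same argument applies verbatim to $X_\chi$.

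Finally I would assemble the pieces. Let $U\in Clop(X_\theta)$ be arbitrary. Then $U\cap X_\chi$ is clopen in $X_\chi$, so by $CBLP$ of $\chi$ there is $\alpha\in B(Con(A))$ with $D_A(\alpha)\cap X_\chi = U\cap X_\chi$. Since $\alpha$ is complemented, $D_A(\alpha)$ is clopen in $Spec(A)$ by Lemma 4.7, hence $D_A(\alpha)\cap X_\theta$ is clopen in $X_\theta$. Using $Y\subseteq X_\chi\subseteq X_\theta$ one computes $(D_A(\alpha)\cap X_\theta)\cap Y = D_A(\alpha)\cap Y = (D_A(\alpha)\cap X_\chi)\cap Y = (U\cap X_\chi)\cap Y = U\cap Y$. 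Applying the injectivity lemma to the two clopen subsets $D_A(\alpha)\cap X_\theta$ and $U$ of $X_\theta$, whose traces on $Y$ coincide, yields $D_A(\alpha)\cap X_\theta = U$. Thus $\alpha\mapsto D_A(\alpha)\cap X_\theta$ is surjective, i.e.\ $\theta$ has $CBLP$.

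The main obstacle is the injectivity lemma, that is, the assertion that a clopen subset of $X_\theta$ is determined by its trace on the closed points $Y$; everything else is bookkeeping with the isomorphisms $v_\theta$ and $v_\chi$. I note that a purely lattice-theoretic alternative is available: via Proposition 5.5 one transfers the statement to $Id$-$BLP$ of the ideals $\theta^\ast\subseteq\chi^\ast$ of $L(A)$, whose maximal structure coincides by Remark 3.7, and one argues identically inside $Spec_{Id}(L(A))$. The spectral argument above, however, is the most direct.
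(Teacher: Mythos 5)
Your proof is correct, but it is organized quite differently from the paper's. The paper argues algebraically: given $\varepsilon\supseteq\theta$ with $\varepsilon/\theta\in B(Con(A/\theta))$, it uses the residuation machinery (Lemma 5.10(2) and Remark 5.11, i.e.\ $\varepsilon\lor(\varepsilon\rightarrow\theta)=\nabla_A$ and $[\varepsilon,\varepsilon\rightarrow\theta]\subseteq\theta$) to verify that $(\varepsilon\lor\chi)/\chi\in B(Con(A/\chi))$, lifts this by $CBLP$ of $\chi$ to some $\alpha\in B(Con(A))$ with $\alpha\lor\chi=\varepsilon\lor\chi$, and then proves $V_A(\alpha\lor\theta)=V_A(\varepsilon)$ by a two-inclusion argument, each inclusion passing through a maximal congruence above a given prime; injectivity of $v_\theta$ finishes. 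You instead transport both $CBLP$ statements across the isomorphisms $v_\theta$, $v_\chi$ into surjectivity onto $Clop(X_\theta)$ and $Clop(X_\chi)$, and isolate a single topological lemma: every nonempty closed subset of $X_\theta$ contains a maximal congruence, hence clopen subsets of $X_\theta$ are determined by their traces on $Y$. That lemma encapsulates exactly the ``pass to a maximal congruence'' step which the paper performs twice by hand, and the surjectivity of $v_\chi$ spares you the residuation bookkeeping entirely: the Boolean congruence of $A/\chi$ corresponding to $U\cap X_\chi$ (which, unwound, is precisely the paper's $(\varepsilon\lor\chi)/\chi$, since $U\cap X_\chi = D_A(\varepsilon\lor\chi)\cap X_\chi$) exists automatically, with no need to exhibit it via Lemma 5.10. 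What your route buys is a clean, reusable separation lemma and no commutator/residuation computations; what the paper's buys is that the middle of the argument stays at the level of congruences and only invokes the topological dictionary ($v_\theta$) at the last step. Both proofs rest on the same foundations: the standing hypothesis that reticulation preserves the Boolean center (through Theorem 4.9, which makes $v_\theta$ and $v_\chi$ isomorphisms), the inclusion $Max(A)\subseteq Spec(A)$, and the existence of a maximal congruence above every proper one.
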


\begin{proof} Assume that $\chi$ has $CBLP$. Let $\varepsilon$ be a congruence of $A$ such that $\theta\subseteq\varepsilon$ and $\varepsilon/{\theta}\in B(Con(A/{\theta}))$, hence $\varepsilon\lor(\varepsilon\rightarrow\theta)=\nabla_A$ (by Lemma 5.10(2)). We observe that $\theta\subseteq\chi$ implies $\varepsilon\rightarrow\theta\subseteq\varepsilon\rightarrow\chi$, so $\nabla_A=\varepsilon\lor(\varepsilon\rightarrow\theta)\subseteq\varepsilon\lor(\varepsilon\rightarrow\chi)$, hence $\varepsilon\lor(\varepsilon\rightarrow\chi)=\nabla_A$. We know that $\chi\subseteq\varepsilon\rightarrow\chi$, so the following hold:

$\varepsilon\lor\chi\lor((\varepsilon\lor\chi)\rightarrow\chi)= \varepsilon\lor\chi\lor((\varepsilon\rightarrow\chi)\cap(\chi\rightarrow\chi))=\varepsilon\lor\chi\lor(\varepsilon\rightarrow\chi)=\varepsilon\lor(\varepsilon\rightarrow\chi)=\nabla_A$.

Applying again by Lemma 5.10(2) we get $(\varepsilon\lor\chi)/{\chi}\in B(Con(A/{\chi}))$. Recall that $\chi$ has $CBLP$, so there exists $\alpha\in B(Con(A))$ such that $(\alpha\lor\chi)/{\chi}=(\varepsilon\lor\chi)/{\chi}$, hence $\alpha\lor\chi=\varepsilon\lor\chi$.

Now we shall prove that $V_A(\alpha\lor\theta)=V_A(\varepsilon)$. In order to verify the inclusion $V_A(\alpha\lor\theta)\subseteq V_A(\varepsilon)$, let us assume that $\phi\in V_A(\alpha\lor\theta)$, so $\phi\in Spec(A)$ and $\alpha\lor\theta\subseteq \phi$. Let us take a maximal congruence $\psi$ of $A$ such that $\phi\subseteq\psi$, so $\alpha\subseteq\psi$ and $\theta\subseteq\psi$. It follows that $\psi\in Max(A)\cap [\theta)_A =Max(A)\cap [\chi)_A $, therefore $\chi\subseteq\psi$. Then $\varepsilon\lor\chi=\alpha\lor\chi\subseteq\psi$, so $\varepsilon\subseteq\psi$. Assume by absurdum that $\varepsilon\not\subseteq\phi$, hence $\varepsilon\rightarrow\theta\subseteq\phi$ (because by Remark 5.11 we have $[\varepsilon,\varepsilon\rightarrow\theta]\subseteq\theta\subseteq\phi$). From $\varepsilon\subseteq\psi$ and $\varepsilon\rightarrow\theta\subseteq\phi\subseteq\psi$ we get $\nabla_A=\varepsilon\lor(\varepsilon\rightarrow\theta)\subseteq\psi$, contradicting that $\psi\in Max(A)$. It follows that $\varepsilon\subseteq\phi$, i.e. $\phi\in V_A(\varepsilon)$. Then the desired inclusion $V_A(\alpha\lor\theta)\subseteq V_A(\varepsilon)$ is proven.

In order to show that $V_A(\varepsilon)\subseteq V_A(\alpha\lor\theta)$, let us suppose that $\phi\in V_A(\varepsilon)$, i.e. $\phi\in Spec(A)$ and $\varepsilon\subseteq\phi$. Let $\psi$ a maximal congruence of $A$ such that $\phi\subseteq\psi$, so $\theta\subseteq\varepsilon\subseteq\phi\subseteq\psi$. It follows that $\psi\in Max(A)\cap [\theta)_A =Max(A)\cap [\chi)_A $, hence $\chi\subseteq\psi$. Thus we obtain $\alpha\lor\chi=\varepsilon\lor\chi\subseteq\psi$. Assume by absurdum that $\alpha\lor\theta\not\subseteq\phi$, so $\alpha\not\subseteq\phi$ (because $\theta\subseteq\varepsilon\subseteq\phi$). Since  $\phi\in Spec(A)$ and $\alpha\in B(Con(A))$, $\alpha\not\subseteq\phi$ implies $\neg\alpha\subseteq\phi\subseteq\psi$, so $\nabla_A= \alpha\lor\neg\alpha\subseteq\phi$, contradicting that $\psi\in Max(A)$. Thus $\alpha\lor\theta\subseteq\phi$, hence $\phi\in V_A(\alpha\lor\theta)$ and the inclusion $V_A(\varepsilon)\subseteq V_A(\alpha\lor\theta)$ is obtained.

From $V_A(\alpha\lor\theta)=V_A(\varepsilon)$ we get $D_A(\alpha\lor\theta)=D_A(\varepsilon)$, therefore

$v_{\theta}((\alpha\lor\theta)/{\theta})=  D_A(\alpha\lor\theta)\cap [\theta)_A =D_A(\varepsilon)\cap [\theta)_A = v_{\theta}(\varepsilon/{\theta})$.

Since $v_{\theta}$ is injective and $(\alpha\lor\theta)/{\theta},\varepsilon/{\theta}\in B(Con(A/{\theta}))$, we get $p_{\theta}^{\bullet}(\alpha)=(\alpha\lor\theta)/{\theta}=\varepsilon/{\theta}$. Then $p_{\theta}^{\bullet}$ is surjective, so $\theta$ has $CBLP$.

\end{proof}

\begin{corolar} Let $\theta$ be a congruence of $A$ such that $\theta\subseteq Rad(A)$. If $Rad(A)$ has $CBLP$ then $\theta$ has $CBLP$.

\end{corolar}

\begin{proof} If $\theta$ is a congruence of $A$ such that $\theta\subseteq Rad(A)$ then $Max(A)\cap [\theta)_A =Max(A)\cap [Rad(A))_A = Max(A)$. Then the corollary follows by applying Theorem 5.12.

\end{proof}

In general $Rad(A)=\bigcap Max(A)$ does not verify $CBLP$ (see Example 3.10 of \cite{Tar4}). In this section we characterize the algebras $A$ for which $Rad(A)$ has $CBLP$. When $A$ is a commutative ring we obtain as particular cases some results proved in Section 3 of \cite{Tar4}.

The following theorem shows an interest in itself, but we shall use it as a tool in the proofs of this section.

\begin{teorema}
 If $U$ is a subset of $Max(A)$ then the following are equivalent
\usecounter{nr}
\begin{list}{(\arabic{nr})}{\usecounter{nr}}
\item $U$ is a clopen subset of $Max(A)$;
\item There exist $\alpha,\beta\in K(A)$ such that $\alpha\lor\beta=\nabla_A$, $[\alpha,\beta]\subseteq Rad(A)$ and $U=Max(A)\cap D_A(\alpha)$.
\end{list}
\end{teorema}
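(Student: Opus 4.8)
The plan is to establish the two implications separately, using only the lattice-theoretic identities $D_A(\alpha)\cup D_A(\beta)=D_A(\alpha\lor\beta)$ and $D_A(\alpha)\cap D_A(\beta)=D_A([\alpha,\beta])$ recorded in Section 2, together with the compactness of $Max(A)$ (Remark 3.7), the compactness of $\nabla_A$, and the definition $Rad(A)=\bigcap Max(A)$.

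For $(2)\Rightarrow(1)$, I would start from $\alpha,\beta\in K(A)$ satisfying $\alpha\lor\beta=\nabla_A$ and $[\alpha,\beta]\subseteq Rad(A)$. The first hypothesis gives $Max(A)\subseteq Spec(A)=D_A(\nabla_A)=D_A(\alpha)\cup D_A(\beta)$, so the two relatively open sets $Max(A)\cap D_A(\alpha)$ and $Max(A)\cap D_A(\beta)$ cover $Max(A)$. The second hypothesis forces their intersection $Max(A)\cap D_A([\alpha,\beta])$ to be empty, since $[\alpha,\beta]\subseteq\psi$ for every $\psi\in Max(A)$ means $\psi\notin D_A([\alpha,\beta])$. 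Hence these two open sets partition $Max(A)$, so each is simultaneously open and closed; in particular $U=Max(A)\cap D_A(\alpha)$ is clopen.

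For $(1)\Rightarrow(2)$, the key step is a compactness reduction. Writing the clopen (hence closed, hence compact) set $U$ as a union of basic open sets $Max(A)\cap D_A(\alpha_i)$ with $\alpha_i\in K(A)$, compactness of $U$ extracts a finite subfamily; joining the corresponding compact congruences and using the closure of $K(A)$ under finite joins yields a single $\alpha\in K(A)$ with $U=Max(A)\cap D_A(\alpha)$. Applying the same argument to the complementary clopen set $Max(A)-U$ produces $\beta\in K(A)$ with $Max(A)-U=Max(A)\cap D_A(\beta)$. It then remains to verify the two algebraic conditions. From $Max(A)=(Max(A)\cap D_A(\alpha))\cup(Max(A)\cap D_A(\beta))=Max(A)\cap D_A(\alpha\lor\beta)$ I get $Max(A)\subseteq D_A(\alpha\lor\beta)$; were $\alpha\lor\beta\neq\nabla_A$, compactness of $\nabla_A$ would give a maximal congruence $\psi$ with $\alpha\lor\beta\subseteq\psi$, i.e. $\psi\in V_A(\alpha\lor\beta)$, contradicting this inclusion, so $\alpha\lor\beta=\nabla_A$. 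From the disjointness $Max(A)\cap D_A([\alpha,\beta])=\emptyset$ I obtain $[\alpha,\beta]\subseteq\psi$ for all $\psi\in Max(A)$, that is $[\alpha,\beta]\subseteq\bigcap Max(A)=Rad(A)$.

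The only delicate point is the compactness reduction in $(1)\Rightarrow(2)$: it is what converts the purely topological clopenness of $U$ into a description by a single compact congruence, and it relies essentially on Remark 3.7. Everything else is a direct translation between the subspace topology on $Max(A)$ and the congruence lattice via the maps $D_A$ and $V_A$, so I expect no further difficulty.
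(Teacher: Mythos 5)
Your proof is correct and follows essentially the same route as the paper's: the same compactness reduction for $(1)\Rightarrow(2)$ (a clopen $U$ is closed in the compact space $Max(A)$, hence compact, hence covered by finitely many basic sets $Max(A)\cap D_A(\alpha_i)$ with $\alpha_i\in K(A)$, whose join gives a single compact congruence), and the same partition argument for $(2)\Rightarrow(1)$, which the paper phrases as the equality $Max(A)\cap D_A(\alpha)=Max(A)\cap V_A(\beta)$. The only cosmetic difference is that the paper checks $[\alpha_i,\beta_j]\subseteq Rad(A)$ pairwise and then uses distributivity of the commutator over joins, whereas you form $\alpha$ and $\beta$ first and apply $D_A(\alpha)\cap D_A(\beta)=D_A([\alpha,\beta])$ directly.
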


\begin{proof} $(1)\Rightarrow(2)$ Assume that $U\in Clp(Max(A))$. We know from Remark 3.7 that $Max(A)$ is a compact set and $U$ is a closed subset of $Max(A)$, so $U$ is itself compact, therefore there exist $\alpha_1,\cdots,\alpha_n\in K(A)$ such that $U=\bigcup_{i=1}^n (Max(A)\cap D_A(\alpha_i))$. By a similar argument, there exist $\beta_1,\cdots,\beta_m\in K(A)$ such that $Max(A)-U=\bigcup_{j=1}^m (Max(A)\cap D_A(\beta_j))$.

For all $i=1,\cdots,n$ and $j=1,\cdots,m$ we have $Max(A)\cap D_A([\alpha_i,\beta_j])= Max(A)\cap D_A(\alpha_i)\cap D_A(\beta_j)=\emptyset$, hence $\phi\in Max(A)$ implies $\phi\not\in D_A([\alpha_i,\beta_j])$, i.e. $[\alpha_i,\beta_j]\subseteq \phi$. It follows that  $[\alpha_i,\beta_j]\subseteq Rad(A)$, for all $i=1,\cdots,n$ and $j=1,\cdots,m$. Denoting $\alpha= \bigvee_{i=1}^n\alpha_i$ and $\beta= \bigvee_{j=1}^m\beta_j$ we get $\alpha,\beta\in K(A)$,

$U=Max(A)\cap (\bigcup_{i=1}^n D_A(\alpha_i)) = Max(A)\cap D_A(\alpha)$,

$Max(A)=(Max(A)\cap D_A(\alpha))\cup (Max(A)\cap D_A(\beta))= Max(A)\cap (D_A(\alpha)\cup\ D_A(\beta)) = Max(A)\cap D_A(\alpha\lor \beta)$

and $[\alpha,\beta]= \bigvee\{[\alpha_i,\beta_j]|i=1,\cdots,n ; j=1,\cdots,m\}\subseteq Rad(A)$.

If $\alpha\lor \beta \neq \nabla_A$ then $\alpha\lor \beta \subseteq \phi$ for some $\phi\in Max(A)$, hence $\phi\not\in D_A(\alpha\lor\beta)$, contradicting $Max(A)= Max(A)\cap D_A(\alpha\lor \beta)$. Then  $\alpha\lor \beta = \nabla_A$, hence the property $(2)$ is verified.

 $(2)\Rightarrow(1)$ Assume that there exist $\alpha,\beta\in K(A)$ such that $\alpha\lor\beta=\nabla_A$, $[\alpha,\beta]\subseteq Rad(A)$ and $U=Max(A)\cap D_A(\alpha)$. In order to prove that $U$ is a clopen subset of $Max(A)$ it suffices to show that $Max(a)\cap D_A(\alpha)= Max(A)\cap V_A(\beta)$.

If $\phi\in Max(A)\cap D_A(\alpha)$ then $\alpha\not\subseteq \phi$, hence $\beta\subseteq \phi$ (because $\phi$ is a prime congruence) and $[\alpha,\beta]\subseteq Rad(A)\subseteq \phi$). Thus $\phi\in Max(A)\cap V_A(\beta)$, therefore we get that $Max(a)\cap D_A(\alpha)\subseteq Max(A)\cap V_A(\beta)$.

Conversely, assume that $\phi\in Max(A)\cap V_A(\beta)$, hence $\beta\subseteq\phi\in Max(A)$. If $\alpha\subseteq\phi$ then $\nabla_A=\alpha\lor\beta\subseteq \phi$, contradicting that $\phi$ is a maximal congruence. Then $\alpha\not\subseteq\phi$, so $\phi\in Max(A)\cap D_A(\alpha)$, obtaining the inclusion $Max(A)\cap V_A(\beta)\subseteq Max(A)\cap D_A(\alpha)$. Thus the equality $Max(A)\cap D_A(\alpha)= Max(A)\cap V_A(\beta)$ follows, so $U$ is a clopen subset of $Max(A)$.

\end{proof}

Let $\alpha$ be a congruence of $A$ that fulfills the properties $Rad(A)\subseteq \alpha$ and $\alpha/{Rad(A)}\in B(Con(A/{Rad(A)}))$. Then there exists $\beta\in Con(A)$ such that the following hold: $Rad(A)\subseteq\beta$, $(\alpha\lor\beta)/{Rad(A)}=\alpha/{Rad(A)}\lor \beta/{Rad(A)}=\nabla_{A/Rad(A)}$ and $([\alpha,\beta]\lor Rad(A))/{Rad(A)}=[\alpha/Rad(A), \beta/Rad(A)]= \Delta_{Rad(A)}$. Thus we obtain $\alpha\lor\beta=\nabla_A$ and $[\alpha,\beta]\subset Rad(A)$. By using Theorem 6.1 it results that $Max(A)\cap D_A(\alpha)$ is a clopen subset of $Max(A)$, so one can define a map $f:B(Con(A/{Rad(A)})\rightarrow Clop(Max(A))$ by setting $f(\alpha/{Rad(A)})=Max(A)\cap D_A(\alpha)$.

\begin{propozitie} The map $f:B(Con(A/{Rad(A)})\rightarrow Clop(Max(A))$ is a Boolean isomorphism.
\end{propozitie}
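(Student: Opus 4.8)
The plan is to realize $f$ as a composite of the Boolean isomorphism $v_{Rad(A)}$ from Section 5 with a restriction-of-clopens map, and then reduce everything to showing that this restriction map is an isomorphism. Writing $\theta = Rad(A)$, recall that $v_\theta = t_\theta^{\ast}\circ D_{A/\theta}:B(Con(A/\theta))\rightarrow Clop(X_\theta)$ is a Boolean isomorphism and that, by Lemma 5.2, $v_\theta(\chi/\theta) = D_A(\chi)\cap[\theta)_A$ whenever $\theta\subseteq\chi$ and $\chi/\theta\in B(Con(A/\theta))$. Because every maximal congruence contains $Rad(A)$, we have $Max(A)\subseteq X_\theta$, so the restriction $r:Clop(X_\theta)\rightarrow Clop(Max(A))$, $r(U)=U\cap Max(A)$, is well defined, and a direct computation gives $r(v_\theta(\chi/\theta)) = D_A(\chi)\cap Max(A) = f(\chi/\theta)$. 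Hence $f = r\circ v_\theta$, and since $v_\theta$ is already a Boolean isomorphism, it suffices to prove that $r$ is one.

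That $r$ preserves unions, intersections, complements and the bounds is routine, because intersecting clopens of the subspace $X_\theta$ with $Max(A)$ commutes with all these operations; thus the work lies in the bijectivity of $r$. For injectivity I would exploit the order structure of the topology: since each $V_A(\gamma)$ is upward closed and each $D_A(\gamma)$ is downward closed under inclusion of congruences, every clopen subset $U$ of $X_\theta$ is simultaneously upward and downward closed inside $X_\theta$. As $\nabla_A$ is compact, every $\phi\in X_\theta$ is contained in some $\psi\in Max(A)$. So if $r(U_1)=r(U_2)$ and $\phi\in U_1$, pick $\psi\in Max(A)$ with $\phi\subseteq\psi$; upward closedness gives $\psi\in U_1\cap Max(A)=U_2\cap Max(A)$, and downward closedness of $U_2$ then gives $\phi\in U_2$. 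By symmetry $U_1=U_2$.

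The surjectivity of $r$ is the crux, and is where Theorem 6.1 is indispensable. Given $W\in Clop(Max(A))$, Theorem 6.1 furnishes $\alpha,\beta\in K(A)$ with $\alpha\lor\beta=\nabla_A$, $[\alpha,\beta]\subseteq Rad(A)=\theta$ and $W=Max(A)\cap D_A(\alpha)$. I would then set $U=D_A(\alpha)\cap X_\theta$ and check it is clopen in $X_\theta$: from $\alpha\lor\beta=\nabla_A$ one gets $(D_A(\alpha)\cap X_\theta)\cup(D_A(\beta)\cap X_\theta)=X_\theta$, while $[\alpha,\beta]\subseteq\theta$ together with $D_A(\alpha)\cap D_A(\beta)=D_A([\alpha,\beta])$ and $D_A(\theta)\cap X_\theta=\emptyset$ forces $(D_A(\alpha)\cap X_\theta)\cap(D_A(\beta)\cap X_\theta)=\emptyset$. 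Thus the two open sets $D_A(\alpha)\cap X_\theta$ and $D_A(\beta)\cap X_\theta$ partition $X_\theta$, so each is clopen; in particular $U$ is clopen and $r(U)=D_A(\alpha)\cap Max(A)=W$.

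Assembling the pieces, $r$ is a bijective Boolean morphism, hence a Boolean isomorphism, and therefore so is $f=r\circ v_\theta$; note that no separate check that $f$ respects the Boolean operations is needed, as this is inherited from the factorization. The main obstacle I anticipate is the surjectivity step, precisely because it depends on extracting the compact congruences $\alpha,\beta$ from Theorem 6.1 and then recognizing $D_A(\alpha)\cap X_\theta$ as a clopen set of $X_\theta$; the injectivity argument, resting on the up/down closedness of clopens and the compactness of $\nabla_A$, is the secondary delicate point.
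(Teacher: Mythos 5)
Your proposal is correct, but it takes a genuinely different route from the paper's. The paper proves the statement directly: it checks by computation that $f$ preserves joins and meets, gets injectivity from the observation that $Max(A)\cap D_A(\alpha)=\emptyset$ forces $\alpha\subseteq Rad(A)$, hence $\alpha=Rad(A)$ and $\alpha/{Rad(A)}=\Delta_{A/{Rad(A)}}$, and obtains surjectivity from the clopen characterization of $Max(A)$ (your ``Theorem 6.1'', stated in the paper as Theorem 5.14) by passing from the compact congruence $\alpha$ to $\gamma=\alpha\lor Rad(A)$. You instead factor $f=r\circ v_{Rad(A)}$ through the Boolean isomorphism $v_{Rad(A)}:B(Con(A/{Rad(A)}))\rightarrow Clop(X_{Rad(A)})$ of Lemma 5.2 and reduce everything to the restriction map $r:Clop(X_{Rad(A)})\rightarrow Clop(Max(A))$. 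This buys you the morphism property and well-definedness of $f$ for free, and it replaces the paper's congruence-level injectivity argument by a purely topological one (clopen subsets of $X_\theta$ are both upward and downward closed under inclusion, and every prime congruence lies below a maximal one); that injectivity statement, saying that a clopen of $V_A(Rad(A))$ is determined by its trace on $Max(A)$, is of independent interest. The cost is that your argument leans on the heavier machinery of $v_\theta$, hence on Theorem 4.9 and the standing hypothesis that the reticulation preserves the Boolean center, which the paper's direct proof of this particular proposition does not actually need. For surjectivity the two proofs make the same essential use of Theorem 5.14, your verification that $D_A(\alpha)\cap X_{Rad(A)}$ is clopen mirroring the paper's check that $(\alpha\lor Rad(A))/{Rad(A)}$ is complemented in $Con(A/{Rad(A)})$.
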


\begin{proof} Let $\alpha,\beta$ be two congruences of $A$ such that $Rad(A)\subseteq\alpha\cap\beta$ and $\alpha/{Rad(A)},\beta/{Rad(A)}\in B(Con(A/{Rad(A)}))$. Then the following equalities hold:

$f(\alpha/{Rad(A)}\lor \beta/{Rad(A)})= f((\alpha\lor \beta)/{Rad(A)})= Max(A)\cap D_A(\alpha\lor\beta)= (Max(A)\cap D_A(\alpha))\cup (Max(A)\cap D_A(\beta) = f(\alpha/{Rad(A)})\cup f(\beta/{Rad(A)}) $.

Similarly, we have $f(\alpha/{Rad(A)}\cap \beta/{Rad(A)})=f(\alpha/{Rad(A)})\cap f(\beta/{Rad(A)}) $, so $f$ is a Boolean morphism. If $Max(A)\cap D_A(\alpha)=\emptyset$ then $\alpha=Rad(A)$, so $\alpha/{Rad(A)}= \Delta_{A/{Rad(A)}}$. We conclude that $f$ is an injective map.

In order to establish the surjectivity of $f$ let us suppose that $U\in Clop(Max(A))$. By using Theorem 5.14, there exist $\alpha,\beta\in K(A)$ such that $\alpha\lor\beta=\nabla_A$, $[\alpha,\beta]\subseteq Rad(A)$ and $U=Max(A)\cap D_A(\alpha)$. Then the following equalities hold:

$(\alpha\lor Rad(A))/{Rad(A)}\lor (\beta\lor Rad(A))/{Rad(A)}=\nabla_{A/{Rad(A)}}$

$[(\alpha\lor Rad(A))/{Rad(A)}, (\beta\lor Rad(A))/{Rad(A)}]= ([\alpha,\beta]\lor Rad(A))/{Rad(A)}=\Delta_{A/{Rad(A)}}$.

Let us denote $\gamma=\alpha\lor Rad(A), \delta=\beta\lor Rad(A)$, hence, by using the previous equalities we obtain $\gamma/{Rad(A)},\delta/{Rad(A)}\in B(Con(A/{Rad(A)}))$ and $Max(A)\cap D_A(\gamma)= Max(A)\cap (D_A(\alpha)\cup D_A(Rad(A))= Max(A)\cap D_A(\alpha)$ (because $D_A(Rad(A)=\emptyset$). It follows that $f(\gamma/{Rad(A)}= U$, so $f$ is surjective.

\end{proof}

\begin{lema}
The Boolean morphism $B(p_{Rad(A)}^{\bullet}):B(Con(A))\rightarrow B(Con(A/{Rad(A)}))$ is injective.
\end{lema}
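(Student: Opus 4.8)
The plan is to establish injectivity by checking that the kernel of this Boolean morphism is trivial. Recall that a morphism $h$ of Boolean algebras is injective precisely when $h(x)$ being the least element forces $x$ to be the least element, since $h(x)=h(y)$ is equivalent to $h(x\,\triangle\,y)$ being the least element, where $\triangle$ denotes the symmetric difference. As the least element of $B(Con(A))$ is $\Delta_A$ and the least element of $B(Con(A/{Rad(A)}))$ is $\Delta_{A/{Rad(A)}}=Rad(A)/{Rad(A)}$, it therefore suffices to prove that any $\alpha\in B(Con(A))$ with $p_{Rad(A)}^{\bullet}(\alpha)=\Delta_{A/{Rad(A)}}$ must equal $\Delta_A$.

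First I would unwind this hypothesis. Since $p_{Rad(A)}^{\bullet}(\alpha)=(\alpha\lor Rad(A))/{Rad(A)}$ and the map $s_{Rad(A)}$ is a lattice isomorphism (so that $\chi/{Rad(A)}=\varepsilon/{Rad(A)}$ iff $\chi=\varepsilon$ for $\chi,\varepsilon\in[Rad(A))_A$), the equality $p_{Rad(A)}^{\bullet}(\alpha)=\Delta_{A/{Rad(A)}}$ amounts to $\alpha\lor Rad(A)=Rad(A)$, that is, $\alpha\subseteq Rad(A)=\bigcap Max(A)$.

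Next I would exploit complementedness. Writing $\alpha^{\perp}$ for the complement of $\alpha$ in $B(Con(A))$, we have $\alpha\lor\alpha^{\perp}=\nabla_A$ and $[\alpha,\alpha^{\perp}]=\Delta_A$, and the goal becomes $V_A(\alpha)=Spec(A)$. Fix an arbitrary $\phi\in Spec(A)$. Since $[\alpha,\alpha^{\perp}]=\Delta_A\subseteq\phi$ and $\phi$ is prime, either $\alpha\subseteq\phi$ or $\alpha^{\perp}\subseteq\phi$. In the latter case, choose a maximal congruence $\psi$ with $\phi\subseteq\psi$; then $\alpha\subseteq Rad(A)\subseteq\psi$ together with $\alpha^{\perp}\subseteq\psi$ yields $\nabla_A=\alpha\lor\alpha^{\perp}\subseteq\psi$, contradicting the properness of $\psi$. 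Hence $\alpha\subseteq\phi$ for every $\phi\in Spec(A)$, i.e. $D_A(\alpha)=\emptyset=D_A(\Delta_A)$.

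Finally, since $D_A|_{B(Con(A))}$ is injective by Lemma 4.8, $D_A(\alpha)=D_A(\Delta_A)$ forces $\alpha=\Delta_A$, so the kernel is trivial and $B(p_{Rad(A)}^{\bullet})$ is injective. I expect the only delicate point to be the passage from maximal to arbitrary prime congruences in the third step: this is exactly where the complementedness of $\alpha$ is indispensable, because a general (non-complemented) congruence contained in $Rad(A)$ need not be contained in every prime congruence.
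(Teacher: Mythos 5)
Your proof is correct and follows essentially the same route as the paper's: reduce injectivity to triviality of the kernel, deduce $\alpha\subseteq Rad(A)$, and obtain a contradiction from a maximal congruence that contains both $\alpha$ (via $Rad(A)$) and its complement. The only difference is cosmetic: the paper gets the contradiction directly by choosing $\phi\in Max(A)$ with $\neg\alpha\subseteq\phi$, whereas you detour through all of $Spec(A)$ and the injectivity of $D_A|_{B(Con(A))}$ (Lemma 4.8), which is harmless but not needed.
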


\begin{proof} Assume that $\alpha\in B(Con(A))$ and $p_{Rad(A)}^{\bullet}(\alpha)=\Delta_{A/{Rad(A)}}$, so we have $(\alpha\lor Rad(A))/{Rad(A)}=\Delta_{A/{Rad(A)}}$. It follows that $\alpha\subseteq Rad(A)$. If $\alpha\neq\Delta_A$ then $\neg \alpha \neq \nabla_A$, so $\neg \alpha \subseteq \phi$ for some $\phi\in Max(A)$. On the other hand we have $\alpha\subseteq Rad(A)\subseteq\phi$, so $\nabla_A=\alpha\lor\neg\alpha \subseteq \phi$, contradicting that $\phi\in Max(A)$. We get $\alpha=\Delta_A$, hence $B(p_{Rad(A)}^{\bullet})$ is injective.
\end{proof}

Let us consider the map $g:B(Con(A))\rightarrow Clop(Max(A))$, defined by $g(\alpha)=Max(A)\cap D_A(\alpha)$, for any $\alpha\in B(Con(A))$. It is easy to see that $g$ is a Boolean morphism.

\begin{teorema}
$Rad(A)$ has $CBLP$ if and only if $g:B(Con(A))\rightarrow Clop(Max(A))$ is a Boolean isomorphism.
\end{teorema}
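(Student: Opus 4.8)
The plan is to realize the map $g$ as a composite of two maps whose behaviour is already understood, namely the Boolean isomorphism $f$ from Proposition 6.3 and the injective Boolean morphism $B(p_{Rad(A)}^{\bullet})$ from Lemma 6.4. Concretely, I would first establish the factorization
\[
g = f\circ B(p_{Rad(A)}^{\bullet}).
\]
To verify this, fix $\alpha\in B(Con(A))$. By definition $B(p_{Rad(A)}^{\bullet})(\alpha)=(\alpha\lor Rad(A))/{Rad(A)}$, so, using the description of $f$ from Proposition 6.3,
\[
f\bigl(B(p_{Rad(A)}^{\bullet})(\alpha)\bigr)= Max(A)\cap D_A(\alpha\lor Rad(A))= Max(A)\cap\bigl(D_A(\alpha)\cup D_A(Rad(A))\bigr).
\]
The key observation is that $Max(A)\cap D_A(Rad(A))=\emptyset$: indeed $Rad(A)=\bigcap Max(A)$, so $Rad(A)\subseteq\phi$ for every $\phi\in Max(A)$, which means $Max(A)\subseteq V_A(Rad(A))$ and hence $Max(A)$ meets $D_A(Rad(A))=Spec(A)-V_A(Rad(A))$ in the empty set. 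Consequently $f(B(p_{Rad(A)}^{\bullet})(\alpha))=Max(A)\cap D_A(\alpha)=g(\alpha)$, which is exactly the desired factorization.

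Once the factorization is in hand, the equivalence becomes purely formal. Since $f$ is a Boolean isomorphism (Proposition 6.3), the composite $g=f\circ B(p_{Rad(A)}^{\bullet})$ is a Boolean isomorphism if and only if $B(p_{Rad(A)}^{\bullet})$ is a Boolean isomorphism. On the other hand, by Definition 5.1 the congruence $Rad(A)$ has $CBLP$ precisely when the Boolean morphism $B(p_{Rad(A)}^{\bullet})$ is surjective; combining this with the injectivity supplied by Lemma 6.4, $Rad(A)$ has $CBLP$ if and only if $B(p_{Rad(A)}^{\bullet})$ is bijective, that is, a Boolean isomorphism. Chaining the two equivalences yields that $Rad(A)$ has $CBLP$ if and only if $g$ is a Boolean isomorphism.

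The only genuine content lies in the first step, i.e. in checking that $g$ factors through $f$; this rests on the elementary but essential identity $Max(A)\cap D_A(Rad(A))=\emptyset$, so that the ``extra'' term $D_A(Rad(A))$ contributes nothing over $Max(A)$. I do not expect any real obstacle beyond this point: the remainder of the argument merely transports bijectivity along the known isomorphism $f$ and reads off surjectivity as the defining feature of $CBLP$. One minor bookkeeping point to keep straight is that $g$ is already a Boolean morphism (as noted just before the statement) and that $f$ preserves the Boolean operations, so the composite $f\circ B(p_{Rad(A)}^{\bullet})$ equals $g$ as Boolean morphisms and not merely as maps of underlying sets.
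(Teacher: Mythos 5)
Your proposal is correct and takes essentially the same route as the paper: the paper likewise establishes the factorization $g = f\circ B(p_{Rad(A)}^{\bullet})$ (presented as a commutative diagram), then combines the fact that $f$ is a Boolean isomorphism with the injectivity of $B(p_{Rad(A)}^{\bullet})$ to obtain the chain of equivalences ($Rad(A)$ has $CBLP$ iff $B(p_{Rad(A)}^{\bullet})$ is surjective iff it is an isomorphism iff $g$ is an isomorphism). The only difference is cosmetic: you spell out the identity $Max(A)\cap D_A(Rad(A))=\emptyset$ justifying $Max(A)\cap D_A(\alpha\lor Rad(A))=Max(A)\cap D_A(\alpha)$, a detail the paper leaves implicit.
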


\begin{proof} For any $\alpha\in B(Con(A))$ the following equalities hold:

$f(B(p_{Rad(A)}^{\bullet})(\alpha))= f((\alpha\lor Rad(A))/{Rad(A)})= Max(A)\cap D_A(\alpha\lor Rad(A))= Max(A)\cap D_A(\alpha)=g(\alpha)$.

Then the following diagram is commutative in the category of Boolean algebras:

\begin{center}
\begin{picture}(150,70)
\put(-5,45){\tiny $B(Con(A))$}
\put(55,55){\tiny $B(p_{Rad(A)})$}
\put(20,50){ \vector(1,0){80}}
\put(105,45){\tiny $B(Con(A/\theta))$}
\put(10,40){\vector(2,-1){45}}
\put(20,20){\scriptsize$g$}
\put(65,20){\vector(2,1){45}}
\put(85,20){\scriptsize$f$}
\put(50,5){\scriptsize$Max(A)$}
\end{picture}
\end{center}

According to Proposition 5.15 $f$ is a Boolean isomorphism, hence, by using Lemma 5.16 and the previous commutative diagram, it follows that $g$ is an injective Boolean morphism. Thus we have the following equivalences: $Rad(A)$ has $CBLP$ iff $p_{Rad(A)}^{\bullet}$ is surjective iff $p_{Rad(A)}^{\bullet}$ is a Boolean isomorphism iff  $g$ is a Boolean isomorphism.

\end{proof}

\begin{remarca} If we apply Theorem 5.17 whenever $A$ is a commutative ring then we obtain Corollary 3.13 of \cite{Tar4}.

\end{remarca}

Recall from \cite{GM2} that the algebra $A$ is hyperarchimedean if for all $\alpha\in Con(A)$ there exists $n\geq 1$ such that $[\alpha,\alpha]^n\in B(Con(A))$.

\begin{propozitie} Any congruence $\theta$ of a hyperarchimedean algebra $A$ has $CBLP$.
\end{propozitie}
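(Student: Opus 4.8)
The plan is to prove surjectivity of the Boolean morphism $B(p_{\theta}^{\bullet}):B(Con(A))\rightarrow B(Con(A/{\theta}))$ directly, producing for a given $\beta\in B(Con(A/{\theta}))$ an explicit complemented lift. First I would use the lattice isomorphism $s_{\theta}:Con(A/{\theta})\rightarrow[\theta)_A$ (recalled before Lemma 5.2) to write $\beta=\varepsilon/{\theta}$ for a unique congruence $\varepsilon$ with $\theta\subseteq\varepsilon$; since $\beta$ is complemented, $\varepsilon/{\theta}\in B(Con(A/{\theta}))$. The candidate lift is then handed to us by the hypothesis itself: applying the hyperarchimedean property to $\varepsilon\in Con(A)$ yields an integer $n\geq1$ with $\alpha:=[\varepsilon,\varepsilon]^n\in B(Con(A))$. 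Note that the definition quantifies over \emph{all} congruences, so there is no compactness issue in applying it to $\varepsilon$. The only remaining task is to check that this $\alpha$ genuinely lifts $\beta$, i.e.\ that $p_{\theta}^{\bullet}(\alpha)=(\alpha\lor\theta)/{\theta}=\varepsilon/{\theta}$.

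For the verification I would compute inside $A/{\theta}$. Since $\theta\subseteq\varepsilon=\varepsilon\cap\varepsilon$, Lemma 2.6 applied with both arguments equal to $\varepsilon$ gives $([\varepsilon,\varepsilon]^n\lor\theta)/{\theta}=[\varepsilon/{\theta},\varepsilon/{\theta}]^n$, so that $(\alpha\lor\theta)/{\theta}=[\varepsilon/{\theta},\varepsilon/{\theta}]^n$. Now I would invoke that $\varepsilon/{\theta}$ is a complemented congruence of $A/{\theta}$: by the identity $\delta\cap\gamma=[\delta,\gamma]$ for complemented $\gamma$ (recorded at the start of Section 4), setting $b=\varepsilon/{\theta}$ we get $[b,b]=b\cap b=b$, and hence $[b,b]^k=b$ for all $k\geq1$ by an immediate induction on the definition of the commutator powers. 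Taking $k=n$ yields $[\varepsilon/{\theta},\varepsilon/{\theta}]^n=\varepsilon/{\theta}$, and therefore $(\alpha\lor\theta)/{\theta}=\varepsilon/{\theta}=\beta$. This exhibits $\alpha\in B(Con(A))$ with $B(p_{\theta}^{\bullet})(\alpha)=\beta$, proving surjectivity, so $\theta$ has $CBLP$.

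I do not expect a serious obstacle: once the lift $[\varepsilon,\varepsilon]^n$ is chosen, the argument collapses to the two facts that a commutator power is absorbed correctly when passing to the quotient (Lemma 2.6) and that Boolean congruences are idempotent under the commutator. The only point meriting a little care is keeping the identifications straight: the equalities $(\alpha\lor\theta)/{\theta}=\varepsilon/{\theta}$ and $\alpha\lor\theta=\varepsilon$ are equivalent precisely because $s_{\theta}$ is a lattice isomorphism on $[\theta)_A$, so all computations should be carried out inside $[\theta)_A$ to avoid any hidden ambiguity. An essentially equivalent but more machinery-heavy alternative would route through the reticulation: by Proposition 5.5 it suffices that $L(A)$ have $Id$-$BLP$, and hyperarchimedeanness forces $L(A)$ to be a Boolean algebra (every $\lambda_A(\alpha)$ equals $\lambda_A([\alpha,\alpha]^n)\in B(L(A))$ by Lemmas 3.1(5) and 4.2), for which $Id$-$BLP$ is automatic; I prefer the direct commutator computation above as it is shorter and avoids invoking the reticulation.
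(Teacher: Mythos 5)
Your proof is correct, and it takes a genuinely different route from the paper's. The paper's own argument is precisely the ``machinery-heavy alternative'' you sketch at the end: by Theorem 7.10 of \cite{GM2}, $A$ is hyperarchimedean iff the reticulation $L(A)$ is a Boolean algebra; every ideal of a Boolean lattice trivially has $Id-BLP$; and the transfer result (Proposition 5.5) then converts $Id-BLP$ of $\theta^{\ast}$ into $CBLP$ of $\theta$. Your main argument instead produces the lift directly: writing $\beta=\varepsilon/{\theta}$ with $\theta\subseteq\varepsilon$, the hyperarchimedean hypothesis (which indeed quantifies over \emph{all} of $Con(A)$, so applying it to $\varepsilon$ is legitimate) hands you $\alpha=[\varepsilon,\varepsilon]^n\in B(Con(A))$, and the verification $(\alpha\lor\theta)/{\theta}=\varepsilon/{\theta}$ follows from the quotient formula for commutator powers together with the idempotence $[b,b]=b\cap b=b$ of a complemented congruence $b$ of $A/{\theta}$ (the identity $\gamma\cap\alpha=[\gamma,\alpha]$ for complemented $\alpha$, recorded at the start of Section 4, applied in the quotient algebra, which lies in the same variety). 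Both steps are sound; note only that the quotient formula you invoke is the paper's Lemma 2.4, not 2.6 --- a harmless citation slip, and the paper itself is off by one in some of its internal references. As for what each approach buys: your direct computation is self-contained, avoids the external input from \cite{GM2}, and --- notably --- nowhere uses the section's standing hypothesis that the reticulation preserves the Boolean center, on which the paper's proof depends through Proposition 5.5; the paper's route is shorter given the already-developed transfer machinery and exhibits the structural fact that hyperarchimedean algebras are exactly those with Boolean reticulation. (The gain in generality is only apparent here, since for a hyperarchimedean algebra condition (3) of Proposition 4.4 holds trivially, so such an algebra automatically has its reticulation preserving the Boolean center; still, your proof makes the proposition independent of that blanket assumption.)
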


\begin{proof} Let $\theta$ be an arbitrary congruence of $A$. From Theorem 7.10 of \cite{GM2} we know that $A$ is hyperarchimedean if and only if the reticulation $L(A)$ of $A$ is a Boolean algebra. It is obvious that any ideal of a Boolean ring is a lifting ideal, so $\theta^{\ast}$ ia a lifting ideal of $L(A)=B(L(A))$ (i.e. $\theta^{\ast}$ has $Id-BLP$). Then, by applying Proposition 5.4, it follows that the congruence $\theta$ has $CBLP$.

\end{proof}

\section{Caracterization of congruences with $CBLP$}

\hspace{0.5cm} In this section we shall prove a characterization theorem for congruences of algebras that have $CBLP$. As a particular case one obtains the main part of Theorem 3.14 of \cite{Tar4}, a result that characterizes the lifting ideals in commutative rings. Before proving this characterization theorem we need some preliminary definitions and results.

Let us consider a semidegenerate congruence  modular variety $\mathcal{V}$ such that for any algebra $A\in \mathcal{V}$, the set $K(A)$ of finitely generated congruences of $A$ is closed under the commutator operation.

Let $R$ be a commutative ring and $B(R)$ the Boolean algebra of its idempotents. For an ideal $I$ of $R$ denote by $I^{\diamond}$ the ideal of $R$ generated by $I\cap B(R)$. If $L$ is a bounded distributive lattice and $I$ an ideal of $L$ then $I^{\diamond}$ will be the ideal of $L$ generated by $I\cap B(L)$ ($I^{\diamond}=[I\cap B(L))$. A similar construction can be done for any algebra $A\in \mathcal{V}$: if $\theta$ is a congruence of $A$ then $\theta^{\diamond}=\bigvee\{\alpha\in B(Con(A))|\alpha\subseteq\theta\}$.

For the rest of the section we will suppose that for any  algebra $A$ of the variety $\mathcal{V}$, the reticulation of $A$ preserves the Boolean center. We fix an algebra $A\in \mathcal{V}$.

The following proposition emphasizes the way in which the reticulation preserves the previous construction $\theta^{\diamond}$.

\begin{propozitie}
If $\theta\in Con(A)$ then $\theta^{{\ast}{\diamond}}=\theta^{{\diamond}{\ast}}$.
\end{propozitie}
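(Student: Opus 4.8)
The plan is to show that both ideals $\theta^{*\diamond}$ and $\theta^{\diamond*}$ of $L(A)$ coincide with one and the same ideal $J$, namely the ideal of $L(A)$ generated by the set $G=\{\lambda_A(e)\mid e\in B(Con(A)),\ e\subseteq\theta\}$. First I would dispose of $\theta^{\diamond*}$, which is the more computational half. By the definition of the construction, $\theta^{\diamond}=\bigvee\{e\in B(Con(A))\mid e\subseteq\theta\}$, so Lemma 3.2(2) gives $\theta^{\diamond*}=\bigl(\bigvee_{e} e\bigr)^{*}=\bigvee_{e} e^{*}$, the join running over all $e\in B(Con(A))$ with $e\subseteq\theta$. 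Since $B(Con(A))\subseteq K(A)$ by Lemma 4.1(3), each such $e$ is compact, and Lemma 3.3(7) yields $e^{*}=(\lambda_A(e)]$. Hence $\theta^{\diamond*}=\bigvee_{e}(\lambda_A(e)]$, which is exactly the ideal of $L(A)$ generated by $G$; that is, $\theta^{\diamond*}=J$.

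The substantive part is to identify $\theta^{*\diamond}$ with $J$. By definition $\theta^{*\diamond}$ is the ideal of $L(A)$ generated by $\theta^{*}\cap B(L(A))$, so it suffices to prove the set equality $\theta^{*}\cap B(L(A))=G$. The inclusion $G\subseteq\theta^{*}\cap B(L(A))$ is immediate: if $e\in B(Con(A))$ with $e\subseteq\theta$, then $\lambda_A(e)\in B(L(A))$ by Lemma 4.2, while $e\in K(A)$ and $e\subseteq\theta$ give $\lambda_A(e)\in\theta^{*}$ by the very definition of $\theta^{*}$ as $\{\lambda_A(\alpha)\mid\alpha\in K(A),\ \alpha\subseteq\theta\}$.

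For the reverse inclusion I would invoke the standing hypothesis that the reticulation of $A$ preserves the Boolean center, which by Proposition 4.4 makes $\lambda_A|_{B(Con(A))}\colon B(Con(A))\to B(L(A))$ a Boolean isomorphism; thus every $x\in\theta^{*}\cap B(L(A))$ has the form $x=\lambda_A(e)$ for a unique $e\in B(Con(A))$. The heart of the matter, and the step I expect to be the main obstacle, is deducing $e\subseteq\theta$ from $\lambda_A(e)\in\theta^{*}$: this is the universal-algebra analogue of the elementary ring fact that an idempotent lying in the radical of an ideal already lies in the ideal. The key observation is that a complemented congruence is "idempotent" for the commutator, namely from $\theta\cap\alpha=[\theta,\alpha]$ for $\alpha\in B(Con(A))$ (recalled in Section 4) one gets $[e,e]=e\cap e=e$, whence $[e,e]^{n}=e$ for every $n\geq1$. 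Now $\lambda_A(e)\in\theta^{*}$ means, by definition of $\theta^{*}$, that $\lambda_A(e)=\lambda_A(\alpha)$ for some $\alpha\in K(A)$ with $\alpha\subseteq\theta$; in particular $\lambda_A(e)\leq\lambda_A(\alpha)$, so Lemma 3.1(8) supplies an integer $n\geq1$ with $[e,e]^{n}\subseteq\alpha$. Since $[e,e]^{n}=e$, this gives $e\subseteq\alpha\subseteq\theta$, so $x=\lambda_A(e)\in G$, completing the inclusion $\theta^{*}\cap B(L(A))\subseteq G$.

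Putting the two halves together, $\theta^{*}\cap B(L(A))=G$ forces $\theta^{*\diamond}=J=\theta^{\diamond*}$, which is the desired equality. The only routine points left to verify are that a join $\bigvee_{e}(\lambda_A(e)]$ of principal ideals in $Id(L(A))$ equals the ideal generated by the set of their generators, and that membership $\lambda_A(e)\in\theta^{*}$ does indeed exhibit a compact $\alpha\subseteq\theta$ with $\lambda_A(\alpha)=\lambda_A(e)$; both are direct consequences of the definitions of $\theta^{*}$ and of the lattice-of-ideals join, and I would not belabour them.
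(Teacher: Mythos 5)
Your proof is correct, but it is organized differently from the paper's and its key step runs through a different mechanism. The paper proves the two inclusions directly: for $\theta^{\diamond\ast}\subseteq\theta^{\ast\diamond}$ it uses compactness of a witness $\alpha\subseteq\theta^{\diamond}$ to squeeze it under some $\gamma\in B(Con(A))$ with $\gamma\subseteq\theta$, whereas your first half reaches the same conclusion more cleanly from Lemma 3.2(2) together with Lemma 3.3(7); for the reverse inclusion, given $x=\lambda_A(\alpha)\in B(L(A))$ with $\alpha\in K(A)$ and $\alpha\subseteq\theta$, the paper invokes clause (3) of Proposition 4.4 to produce $[\alpha,\alpha]^n\in B(Con(A))$, which automatically lies below $\alpha\subseteq\theta$ (hence in $\theta^{\diamond}$) and satisfies $\lambda_A([\alpha,\alpha]^n)=x$, so the required complemented witness below $\theta$ comes for free. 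You instead lift $x$ through the Boolean isomorphism of Proposition 4.4(1) to obtain $e\in B(Con(A))$ with $\lambda_A(e)=x$, and then must do extra work to show $e\subseteq\theta$: your observation that $[e,e]^n=e$ (from the Section 4 identity $\theta\cap\alpha=[\theta,\alpha]$ for complemented $\alpha$) combined with Lemma 3.1(8). That step is sound, and it is indeed the universal-algebra analogue of ``an idempotent in the radical of an ideal already lies in the ideal''; note that by injectivity of $\lambda_A|_{B(Con(A))}$ (Lemma 4.3) your $e$ and the paper's $[\alpha,\alpha]^n$ are in fact the same congruence, reached by two different routes. What your route buys is the sharper set equality $\theta^{\ast}\cap B(L(A))=\{\lambda_A(e)\mid e\in B(Con(A)),\ e\subseteq\theta\}$ and a tidier computation of $\theta^{\diamond\ast}$; what the paper's buys is economy, since its choice of witness makes the idempotence argument unnecessary.
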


\begin{proof} Firstly we shall prove that $\theta^{{\diamond}{\ast}}\subseteq \theta^{{\ast}{\diamond}}$. Assume that $x\in \theta^{{\diamond}{\ast}}$, so $x=\lambda_A(\alpha)$, for some $\alpha\in K(A)$ such that $\alpha\subseteq \theta^{\diamond}$. Since $\alpha$ is a compact congruence and $\theta^{\diamond}=\bigvee\{\alpha\in B(Con(A))|\alpha\subseteq\theta\}$ we get $\alpha\subseteq\gamma\subseteq\theta$, for some $\gamma\in B(Con(A))$. Thus $\lambda_A(\alpha)\leq\lambda_A(\gamma)$ and $\lambda_A(\gamma)\in B(L(A))\cap \theta^{\ast}$, hence it follows that $x=\lambda_A(\alpha)\in [B(L(A))\cap \theta^{\ast})=\theta^{{\ast}{\diamond}}$.

In order to establish the converse inclusion $\theta^{{\ast}{\diamond}}\subseteq\theta^{{\diamond}{\ast}}$ it suffices to check that $B(L(A))\cap \theta^{\ast}\subseteq\theta^{{\diamond}{\ast}}$. Assume that $x\in B(L(A))\cap \theta^{\ast}$, so $x=\lambda_A(\alpha)$ for some compact congruence $\alpha\subseteq\theta$. Since  the reticulation of $A$ preserves the Boolean center, $\lambda_A(\alpha\in B(L(A))$ implies $[\alpha,\alpha]^n\in B(Con(A))$, for some integer $n\geq 0$ (see the condition $(3)$ of Proposition 4.4). We observe that $[\alpha,\alpha]^n\leq\theta$ and $[\alpha,\alpha]^n\in B(Con(A))$ implies that $[\alpha,\alpha]^n\subseteq \theta^{\diamond}$, so $x=\lambda_A([\alpha,\alpha]^n)\in \theta^{{\diamond}{\ast}}$.
\end{proof}

Let us remind the following result proved by Hochster in \cite{Hochster}: for any bounded distributive lattice $L$ there exists a commutative ring $R$ whose reticulation $L(R)$ is isomorphic with $L$. By using the Hochster theorem, it follows that for the algebra $A$ there exists a commutative ring $R$ whose reticulation $L(R)$ is isomorphic to the lattice $L(A)$ (we can identify the isomorphic lattices $L(R)$ and $L(A)$). The reticulation properties presented in Section 3 provide a strong connection between the congruences of $A$ and the ideals of the ring $R$, allowing us to export some results from rings to algebras. We will illustrate this thesis in various proofs of this section

Let us consider $\theta\in Con(A)$ and $\phi\in Max(A)$, so $\theta^{\ast}$ is an ideal of the lattice $L(A)$ and $\phi^{\ast}$ is maximal ideal of $L(A)$. By using twice Proposition 3.4, there exist an ideal of $R$ and a maximal ideal $M$ of $R$ such that $I^{\ast}=\theta^{\ast}$ and $M^{\ast}=\phi^{\ast}$ (we identify the ring ideals $I$ and $M$ with their ring congruences). The cardinal number of a set $\Omega$ will be denoted by $|\Omega|$.

\begin{lema}
$|B(Con(A/{(\theta\lor\phi^{\diamond})}))|=|B(Con(R/{(I\lor M^{\diamond})}))|$.
\end{lema}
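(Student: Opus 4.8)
The plan is to establish the cardinality equality by producing Boolean isomorphisms between each side and a common object built from the reticulation $L(A)$ (identified with $L(R)$ via Hochster's theorem). The key observation is that the quantity $|B(Con(A/(\theta\lor\phi^{\diamond})))|$ should be computable purely in terms of the lattice $L(A)$, precisely because the reticulation of $A$ preserves the Boolean center. First I would use Proposition 5.5 together with the reticulation machinery of Section 3 to reduce the Boolean center of the quotient $Con(A/(\theta\lor\phi^{\diamond}))$ to the Boolean center of a quotient of $L(A)$. Concretely, since $B(p_\psi^{\bullet})$ identifies liftable Boolean elements, and since $A/\psi$ for $\psi=\theta\lor\phi^{\diamond}$ has reticulation $L(A)/\psi^{\ast}$ (this uses the transfer properties encoded in Lemma 3.3 and Lemma 3.4), I expect $B(Con(A/\psi))$ to be Boolean-isomorphic to $B(L(A)/\psi^{\ast})$ under the hypothesis that the reticulation preserves the Boolean center.

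The central computation is then to identify $\psi^{\ast}=(\theta\lor\phi^{\diamond})^{\ast}$ inside $L(A)$. Using Lemma 3.2(2) one has $(\theta\lor\phi^{\diamond})^{\ast}=\theta^{\ast}\vee(\phi^{\diamond})^{\ast}$, and by Proposition 6.1 the troublesome term satisfies $(\phi^{\diamond})^{\ast}=(\phi^{\ast})^{\diamond}$, so that
\[
(\theta\lor\phi^{\diamond})^{\ast}=\theta^{\ast}\vee(\phi^{\ast})^{\diamond}.
\]
On the ring side, the same reasoning (the reticulation of $R$ is the classical one, and the Grothendieck correspondence of Remark 4.10 ensures $B(R)$ matches $B(L(R))$) gives that $B(Con(R/(I\lor M^{\diamond})))$ is Boolean-isomorphic to $B(L(R)/(I\lor M^{\diamond})^{\ast})$, and again $(I\lor M^{\diamond})^{\ast}=I^{\ast}\vee(M^{\ast})^{\diamond}$. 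Since we chose $I$ and $M$ precisely so that $I^{\ast}=\theta^{\ast}$ and $M^{\ast}=\phi^{\ast}$ as ideals of $L(R)=L(A)$, the two ideals $\theta^{\ast}\vee(\phi^{\ast})^{\diamond}$ and $I^{\ast}\vee(M^{\ast})^{\diamond}$ coincide as ideals of the same lattice.

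Assembling these, both Boolean centers are isomorphic to $B\big(L(A)/(\theta^{\ast}\vee(\phi^{\ast})^{\diamond})\big)$, and hence to each other; in particular their cardinalities agree. I would present the argument as a chain of Boolean isomorphisms
\[
B(Con(A/(\theta\lor\phi^{\diamond})))\cong B(L(A)/(\theta^{\ast}\vee(\phi^{\ast})^{\diamond}))\cong B(Con(R/(I\lor M^{\diamond}))),
\]
from which the cardinality statement is immediate, so no explicit counting is required.

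The main obstacle I anticipate is justifying cleanly that $B(Con(A/\psi))\cong B(L(A)/\psi^{\ast})$, i.e.\ that the reticulation of the quotient $A/\psi$ is $L(A)/\psi^{\ast}$ and that this identification respects Boolean centers. This is where the standing hypothesis that the reticulation preserves the Boolean center does its work, via Proposition 5.5 (relating $CBLP$ to $Id\text{-}BLP$) and the Boolean isomorphism $\lambda_A|_{B(Con(A))}$ of Lemma 4.3 promoted to an isomorphism under the preservation hypothesis. One must be careful that $\psi^{\diamond}$-type terms interact correctly with radicals, but Proposition 6.1 is tailored exactly to handle the commutation of $(\cdot)^{\ast}$ with $(\cdot)^{\diamond}$, so once the quotient-reticulation identification is in place the remainder is a formal verification.
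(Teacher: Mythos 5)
Your proposal is correct and follows essentially the same route as the paper: both reduce each side to the Boolean center of a quotient of the common lattice $L(A)=L(R)$, using the preservation hypothesis to get $B(Con(A/\psi))\simeq B(L(A/\psi))$, the quotient-reticulation identification $L(A/\psi)\simeq L(A)/\psi^{\ast}$ (this is precisely Proposition 7.6 of the cited paper of Georgescu--Mure\c{s}an, which is the clean way to fill the obstacle you flag), Lemma 3.2(2) to split the join, and Proposition 6.1 together with $\theta^{\ast}=I^{\ast}$, $\phi^{\ast}=M^{\ast}$ to see that the two ideals coincide. The only difference is bookkeeping: the paper invokes Proposition 4.4(3) rather than Proposition 5.5/Lemma 4.3 for the Boolean-center step, but the argument is the same.
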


\begin{proof} By hypothesis, the reticulation of any algebra in $\mathcal{V}$ preserves the Boolean center. In particular, the reticulation of $A/{(\theta\lor\phi^{\diamond})}$ preserves the Boolean center. Therefore by using Theorem 4.4(3), Lemma 3.2(2) and Proposition 7.6 of \cite{GM2} one gets the following Boolean isomorphisms:

$B(Con(A/{(\theta\lor\phi^{\diamond})}))\simeq B(L(A/({\theta\lor\phi^{\diamond})}))\simeq B(L(A)/{(\theta\lor\phi^{\diamond})^{\ast}})\simeq $

$B(L(A)/{(\theta^{\ast}\lor\phi^{\diamond\ast}}))$

 and

 $B(Con(R/{(I\lor M^{\diamond})}))\simeq B(L(R/{(I\lor M^{\diamond})}))\simeq B(L(R)/{(I\lor M^{\diamond})^{\ast}})$

 $\simeq B(L(R)/{(I^{\ast}\lor M^{\diamond\ast}}))$.

 According to Proposition 6.1 we have $\phi^{{\diamond}{\ast}}=\phi^{{\ast}{\diamond}}=M^{{\ast}{\diamond}}=M^{{\diamond}{\ast}}$, hence the Boolean algebras $B(Con(A/{(\theta\lor\phi^{\diamond})}))$ and $B(Con(R/{(I\lor M^{\diamond})}))$ are isomorphic, so they have the same cardinal number.

\end{proof}

Recall from \cite{Atiyah} that two ideals $I,J$ of a commutative ring $R$ are said to be coprime if $I\lor J = I+J = R$. Similarly, two congruences $\theta,\chi$ of the algebra $A$ are coprime if $\theta\lor\chi=\nabla_A$. By Lemma 2.3(1), if the congruences $\theta,\chi$ are coprime then $[\theta,\chi]=\theta\cap\chi$.

\begin{teorema}
For any congruence $\theta$ of the algebra $A$ the following are equivalent:
\usecounter{nr}
\begin{list}{(\arabic{nr})}{\usecounter{nr}}
\item $\theta$ has $CBLP$;
\item For all coprime congruences $\phi,\psi$ of the algebra $A$ such that $[\phi,\psi]\subseteq \theta$ there exists $\alpha\in B(Con(A))$ such that $\alpha\subseteq\theta\lor\phi$ and $\neg\alpha\subseteq\theta\lor\psi$;
\item For all coprime congruences $\phi,\psi$ of the algebra $A$ such that $[\phi,\psi]= \theta$ there exists $\alpha\in B(Con(A))$ such that $\alpha\subseteq\theta\lor\phi$ and $\neg\alpha\subseteq\theta\lor\psi$;
\item If $\phi$ is a maximal congruence of $A$ then $|B(Con(A/{(\theta\lor\phi^{\diamond})}))|\leq 2$.
\end{list}
\end{teorema}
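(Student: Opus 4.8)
The plan is to prove the cycle $(1)\Rightarrow(2)\Rightarrow(3)\Rightarrow(1)$ by direct commutator computations inside $Con(A)$, and then to establish $(1)\Leftrightarrow(4)$ separately by transporting the statement to a commutative ring via Hochster's theorem and the reticulation. The first cycle needs no ring theory at all; only the implication with $(4)$ genuinely exploits the ring machinery set up in Lemma 6.2.

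For $(1)\Rightarrow(2)$ I would begin with coprime $\phi,\psi$ satisfying $[\phi,\psi]\subseteq\theta$ and pass to $A/{\theta}$. Using $p_{\theta}^{\bullet}(\gamma)=(\gamma\lor\theta)/{\theta}$ together with the commutator identity $(2.3)$ and coprimality, one computes $(\phi\lor\theta)/{\theta}\lor(\psi\lor\theta)/{\theta}=\nabla_{A/{\theta}}$ and $[(\phi\lor\theta)/{\theta},(\psi\lor\theta)/{\theta}]=([\phi,\psi]\lor\theta)/{\theta}=\Delta_{A/{\theta}}$, so by Lemma 4.1(1) these two elements are mutually complementary members of $B(Con(A/{\theta}))$. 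Applying $CBLP$ lifts $(\phi\lor\theta)/{\theta}$ to some $\alpha\in B(Con(A))$, and since $B(p_{\theta}^{\bullet})$ is a Boolean morphism, $\neg\alpha$ lifts the complement $(\psi\lor\theta)/{\theta}$. Because $s_{\theta}$ is a lattice isomorphism on $[\theta)_A$, the equalities $\alpha\lor\theta=\phi\lor\theta$ and $\neg\alpha\lor\theta=\psi\lor\theta$ follow, giving the required inclusions $\alpha\subseteq\theta\lor\phi$ and $\neg\alpha\subseteq\theta\lor\psi$. The implication $(2)\Rightarrow(3)$ is immediate, as $(3)$ is merely the restriction of $(2)$ to the hypothesis $[\phi,\psi]=\theta$.

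For $(3)\Rightarrow(1)$ I would take $\varepsilon\supseteq\theta$ with $\varepsilon/{\theta}\in B(Con(A/{\theta}))$ and write its complement as $\delta/{\theta}$ with $\delta\supseteq\theta$ (possible since $s_{\theta}$ is onto). Translating the defining relations back up yields $\varepsilon\lor\delta=\nabla_A$ and $[\varepsilon,\delta]\subseteq\theta$; the key observation is that coprimality forces $[\varepsilon,\delta]=\varepsilon\cap\delta$ by Lemma 2.3(1), and since $\theta\subseteq\varepsilon\cap\delta$ one in fact gets $[\varepsilon,\delta]=\theta$. Now $(3)$ applies and produces $\alpha\in B(Con(A))$ with $\alpha\subseteq\varepsilon$ and $\neg\alpha\subseteq\delta$. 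Passing to $A/{\theta}$, the complemented element $(\alpha\lor\theta)/{\theta}$ lies below $\varepsilon/{\theta}$ while its complement $(\neg\alpha\lor\theta)/{\theta}$ lies below $(\varepsilon/{\theta})^{\perp}=\delta/{\theta}$; since in a Boolean algebra $a\leq b$ together with $\neg a\leq\neg b$ forces $a=b$, we conclude $(\alpha\lor\theta)/{\theta}=\varepsilon/{\theta}$, i.e. $p_{\theta}^{\bullet}(\alpha)=\varepsilon/{\theta}$, which establishes $CBLP$.

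The equivalence $(1)\Leftrightarrow(4)$ is where the ring transport is essential. Using Hochster's theorem I fix a commutative ring $R$ with $L(R)\cong L(A)$, identify the two lattices, and choose the ideal $I$ of $R$ with $I^{\ast}=\theta^{\ast}$ as in the setup preceding Lemma 6.2. By Proposition 5.5 applied to both $A$ and $R$, the congruence $\theta$ has $CBLP$ iff $\theta^{\ast}=I^{\ast}$ has $Id$-$BLP$ iff $I$ is a lifting ideal of $R$. The reticulation also induces a bijection $Max(A)\cong Max_{Id}(L(A))\cong Max(R)$ (Remark 3.7), under which a maximal congruence $\phi$ corresponds to a maximal ideal $M$ with $\phi^{\ast}=M^{\ast}$, and Lemma 6.2 gives $|B(Con(A/{(\theta\lor\phi^{\diamond})}))|=|B(Con(R/{(I\lor M^{\diamond})}))|$. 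Hence condition $(4)$ for $A$ is equivalent to the corresponding cardinality condition for $R$, and the desired equivalence follows from the ring-theoretic characterization of lifting ideals, Theorem 3.14 of \cite{Tar4}. The main obstacle I anticipate is the bookkeeping in this last step: one must verify that the quantifier ``for every maximal congruence of $A$'' matches exactly ``for every maximal ideal of $R$'' under the reticulation correspondence, and that $\phi^{\diamond}$ transports to $M^{\diamond}$ under the identification $L(A)\cong L(R)$ --- which is precisely what Proposition 6.1 and Lemma 6.2 are designed to guarantee.
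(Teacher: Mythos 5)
Your proposal is correct and follows essentially the same route as the paper: the cycle $(1)\Rightarrow(2)\Rightarrow(3)\Rightarrow(1)$ by direct commutator computations in $Con(A)$ and $Con(A/\theta)$ (with only cosmetic differences, e.g.\ deriving $[\varepsilon,\delta]=\theta$ from $[\varepsilon,\delta]\subseteq\theta$ and $\theta\subseteq\varepsilon\cap\delta=[\varepsilon,\delta]$ rather than from $\chi\cap\varepsilon=\theta$ directly), and $(1)\Leftrightarrow(4)$ by transporting to a commutative ring via Hochster's theorem, Proposition 5.5, Lemma 6.2, and Theorem 3.14 of \cite{Tar4}. No gaps to report.
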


\begin{proof} $(1)\Rightarrow(2)$ Assume that $\phi,\psi$ are coprime congruences of the algebra $A$ such that $[\phi,\psi]\subseteq \theta$, so $(\phi\lor\theta)/{\theta}\lor (\psi\lor\theta)/{\theta}=\nabla_{A/{\theta}}$ and $[(\phi\lor\theta)/{\theta},(\psi\lor\theta)/{\theta}]=([\phi,\psi]\lor\theta)/{\theta}=\theta/{\theta}=\Delta_{A/{\theta}}$, therefore $(\phi\lor\theta)/{\theta}, (\psi\lor\theta)/{\theta}\in B(Con(A/{\theta}))$ and $\neg((\phi\lor\theta)/{\theta})= (\psi\lor\theta)/{\theta}$. By hypothesis, $\theta$ has $CBLP$, i.e. the map $B(p_{\theta}^{\bullet}):B(Con(A))\rightarrow B(Con(A/{\theta})))$ is surjective, so there exist $\alpha\in B(Con(A))$ such that $(\phi\lor\theta)/{\theta}= p_{\theta}^{\bullet}(\alpha)=(\alpha\lor\theta)/{\theta}$. It follows that $\phi\lor\theta=\alpha\lor\theta$.

On the other hand, $B(p_{\theta}^{\bullet})$ is a Boolean morphism and $\neg\alpha\in B(Con(A))$, hence the following equalities hold:

$(\neg\alpha\lor\theta)/{\theta}=p_{\theta}^{\bullet}(\neg\alpha)=\neg p_{\theta}^{\bullet}(\alpha)=\neg((\alpha\lor\theta)/{\theta})=\neg((\phi\lor\theta)/{\theta})=(\psi\lor\theta)/{\theta}$,

hence $\neg\alpha\lor\theta=\psi\lor\theta$, therefore $\neg\alpha\subseteq\theta\lor\psi$.

$(2)\Rightarrow(3)$ Obviously.

$(3)\Rightarrow(1)$ In order to show that $B(p_{\theta}^{\bullet})$ is a surjective map, let us consider $\chi\in Con(A)$ such that $\theta\subseteq\chi$ and $\chi/{\theta}\in B(Con(A/{\theta}))$. Thus there exists $\varepsilon\in Con(A)$ such that $\theta\subseteq\varepsilon$ and $\neg( \chi/{\theta})=\varepsilon/{\theta}$. It follows that $(\chi\lor\varepsilon)/{\theta}=\chi/{\theta}\lor\varepsilon/{\theta}=\nabla_{A/{\theta}}$, so $\chi\lor\varepsilon=\nabla_A$, i.e. $\chi$ and $\varepsilon$ are coprime.

From $(\chi\cap\varepsilon)/{\theta}=(\chi/{\theta}\cap\varepsilon)/{\theta}=\Delta_{A/{\theta}}$ we get $\chi\cap\varepsilon=\theta$. Since $\chi$ and $\varepsilon$ are coprime we have $[\chi,\varepsilon]=\chi\cap\varepsilon=\theta$. Then one can apply the hypothesis $(3)$, so there exists $\alpha\in B(Con(A))$ such that $\alpha\subseteq\chi\lor\theta=\chi$ and $\neg\alpha\subseteq\varepsilon\lor\theta=\varepsilon$.

From $\alpha\subseteq\chi$ we get $p_{\theta}^{\bullet}(\alpha)=(\alpha\lor\theta)/{\theta}\subseteq\chi/{\theta}$. On the other hand, the following implications hold: $\neg\alpha\subseteq\varepsilon\Rightarrow  p_{\theta}^{\bullet}(\neg\alpha)\subseteq p_{\theta}^{\bullet}(\varepsilon)\Rightarrow\neg p_{\theta}^{\bullet}(\alpha)\subseteq\varepsilon/{\theta}\Rightarrow \neg \varepsilon/{\theta}\subseteq p_{\theta}^{\bullet}(\alpha)\Rightarrow \chi/{\alpha}\subseteq p_{\theta}^{\bullet}(\alpha)$.

It follows that $ p_{\theta}^{\bullet}(\alpha)=\chi/{\theta}$ and $\alpha\in B(Con(A))$, so $ p_{\theta}^{\bullet}$ is surjective. Then $\theta$ has $CBLP$.

$(1)\Leftrightarrow(4)$ By using the Hochster theorem \cite{Hochster}, one can find a commutative ring $R$ such that the reticulation $L(A)$ of the algebra $A$ and the reticulation $L(R)$ of the ring $R$ are isomorphic. As usual, we can assume that $L(A)$ and $L(R)$ coincide.

We know that $\theta^{\ast}$ is an ideal of $L(A)$ so one can take an ideal $I$ of the ring $R$ such that $\theta^{\ast}=I^{\ast}$ (we identify the ideals of $R$ with their congruences). According to Remark 3.7, for any maximal congruence $\phi$ of $A$ we can find a maximal ideal $M$ of $R$ such that $\phi^{\ast}=M^{\ast}$.

By two applications of Proposition 5.5 it follows that $\theta$ has $CBLP$ if and only if $I$ is a lifting ideal of $R$. Therefore by using Theorem 3.14 of \cite{Tar4} and Lemma 6.2, the following properties are equivalent:

$\bullet$ $\theta$ has $CBLP$;

$\bullet$ $I$ is a lifting ideal of $R$;

$\bullet$ If $M$ is a maximal ideal of $R$ then $R/({I\lor M^{\diamond})}$ has no nontrivial idempotent;

$\bullet$ If $\phi$ is a maximal congruence of $A$ then $|B(Con(A/{(\theta\lor\phi^{\diamond})}))|\leq 2$.

\end{proof}

Recall from \cite{Aghajani} that an ideal of a commutative ring $R$ is said to be regular if $I=I^{\diamond}$; similarly, an ideal $J$ of a bounded distributive  $L$ is said to be regular if $J=J^{\diamond}$. In general, a congruence $\theta$ of the algebra $A$ is regular if $\theta=\theta^{\diamond}$.

\begin{lema}
Assume that $\theta\in Con(A)$ and $I\in Id(L(A))$.
\usecounter{nr}
\begin{list}{(\arabic{nr})}{\usecounter{nr}}
\item If $\theta$ is a regular congruence of $A$ then $\theta^{\ast}$ is a regular ideal in the lattice $L(A)$;
\item If $I$ is a regular ideal in the lattice $L(A)$ then there exists a regular congruence $\chi$ of $A$ such that $I=\chi^{\ast}$.
\end{list}
\end{lema}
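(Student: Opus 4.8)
The plan is to derive both parts directly from Proposition 6.1, read as the statement that the two operations $(\cdot)^{\ast}$ and $(\cdot)^{\diamond}$ commute, together with the adjunction identities of Lemma 3.3.

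For part $(1)$ the argument is immediate. Since $\theta$ is regular we have $\theta=\theta^{\diamond}$; applying $(\cdot)^{\ast}$ and then Proposition 6.1 gives $\theta^{\ast}=\theta^{\diamond\ast}=\theta^{\ast\diamond}$, which is exactly the assertion that the ideal $\theta^{\ast}$ is regular. No separate computation is needed here.

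For part $(2)$ the first task is to select the correct witness $\chi$. The obvious guess $\chi=I_{\ast}$ does satisfy $\chi^{\ast}=I$ by Lemma 3.3(2), but $I_{\ast}$ is in general only a radical congruence (Lemma 3.3(3)) and need not be regular, so I would instead take $\chi=(I_{\ast})^{\diamond}$. That this $\chi$ still maps onto $I$ follows by the same commutation: $\chi^{\ast}=(I_{\ast})^{\diamond\ast}=(I_{\ast})^{\ast\diamond}=I^{\diamond}=I$, where the middle equality is Proposition 6.1, the identity $(I_{\ast})^{\ast}=I$ is Lemma 3.3(2), and the last step uses the regularity hypothesis on $I$.

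The step I expect to carry the real content is verifying that $\chi$ is regular, which reduces to showing that the operation $(\cdot)^{\diamond}$ on $Con(A)$ is idempotent, i.e. $\vartheta^{\diamond\diamond}=\vartheta^{\diamond}$ for every $\vartheta$; then $\chi^{\diamond}=(I_{\ast})^{\diamond\diamond}=(I_{\ast})^{\diamond}=\chi$. Idempotence I would prove straight from the definition $\vartheta^{\diamond}=\bigvee\{\alpha\in B(Con(A))\mid\alpha\subseteq\vartheta\}$: the inclusion $\vartheta^{\diamond\diamond}\subseteq\vartheta^{\diamond}$ holds because $\vartheta^{\diamond}\subseteq\vartheta$, so the complemented congruences below $\vartheta^{\diamond}$ are among those below $\vartheta$; conversely, each complemented $\alpha\subseteq\vartheta$ is one of the joinands of $\vartheta^{\diamond}$, hence $\alpha\subseteq\vartheta^{\diamond}$, so $\alpha$ contributes to the join defining $\vartheta^{\diamond\diamond}$, giving $\vartheta^{\diamond}\subseteq\vartheta^{\diamond\diamond}$. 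Combining the two inclusions yields idempotence, and therefore $\chi=(I_{\ast})^{\diamond}$ is the desired regular congruence with $\chi^{\ast}=I$.
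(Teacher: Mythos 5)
Your proposal is correct, and for part $(2)$ it is essentially the paper's own proof: the paper also takes the witness $\chi=(I_{\ast})^{\diamond}$ and performs exactly the computation $\chi^{\ast}=(I_{\ast})^{\diamond\ast}=((I_{\ast})^{\ast})^{\diamond}=I^{\diamond}=I$ via Proposition 6.1 and Lemma 3.3(2). The differences are in what you prove versus what the paper outsources. For part $(1)$ the paper simply cites Proposition 6.19(1) of \cite{GG}, whereas you derive it in one line from Proposition 6.1: $\theta=\theta^{\diamond}$ gives $\theta^{\ast}=\theta^{\diamond\ast}=\theta^{\ast\diamond}$, which is a cleaner, self-contained argument using only material already established in the section. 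Likewise, the paper asserts without comment that $(I_{\ast})^{\diamond}$ is a regular congruence, while you supply the missing justification by proving idempotence of $(\cdot)^{\diamond}$ directly from its definition as $\bigvee\{\alpha\in B(Con(A))\mid\alpha\subseteq\vartheta\}$; your two-inclusion argument for $\vartheta^{\diamond\diamond}=\vartheta^{\diamond}$ is sound (each complemented $\alpha\subseteq\vartheta^{\diamond}$ lies below $\vartheta$, and conversely each complemented $\alpha\subseteq\vartheta$ is a joinand of $\vartheta^{\diamond}$, hence lies below it). So your version buys self-containment at the cost of a little extra length, while the paper's buys brevity by leaning on \cite{GG} and on the (easy, but unstated) idempotence of $(\cdot)^{\diamond}$.
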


\begin{proof}
$(1)$ See Proposition 6.19(1) of \cite{GG}.

$(2)$ $\chi=(I_{\ast})^{\diamond}$ is a regular congruence of $A$ and by using Proposition 6.1 and Lemma 3.3(2), the following equalities hold: $\chi^{\ast}=(I_{\ast})^{\diamond \ast} = ((I_{\ast})^{\ast})^{\diamond}=I^{\diamond}=I$.
\end{proof}

\begin{propozitie} Let $\theta, \chi$ be two congruences of $A$. If $\theta$ has $CBLP$ and $\chi$ is regular then $\theta\lor\chi$ has $CBLP$.
\end{propozitie}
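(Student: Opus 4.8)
The plan is to translate the statement into the reticulation $L(A)$, where it becomes a transparent fact about ideals of a bounded distributive lattice, and to settle it there. The workhorse is Proposition 5.5: a congruence has $CBLP$ precisely when its associated ideal of $L(A)$ has $Id$-$BLP$.

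First I would reduce. By Lemma 3.2(2) we have $(\theta\lor\chi)^{\ast}=\theta^{\ast}\lor\chi^{\ast}$, so by Proposition 5.5 it suffices to prove that the ideal $\theta^{\ast}\lor\chi^{\ast}$ of $L(A)$ has $Id$-$BLP$. By the same proposition the hypothesis that $\theta$ has $CBLP$ means exactly that $\theta^{\ast}$ has $Id$-$BLP$, and by Lemma 6.4(1) the regularity of $\chi$ gives that $\chi^{\ast}$ is a regular ideal of $L(A)$. Hence the whole statement follows from the purely lattice-theoretic claim: \emph{in any bounded distributive lattice $L$, if an ideal $I$ has $Id$-$BLP$ and an ideal $J$ is regular, then $I\lor J$ has $Id$-$BLP$.}

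To prove this claim I would factor the canonical map $L\to L/(I\lor J)$ through $L/I$, using the isomorphism $(L/I)/((I\lor J)/I)\cong L/(I\lor J)$. Since $J$ is generated by $J\cap B(L)$ and the canonical image of a complemented element is complemented, the ideal $(I\lor J)/I$ is generated by complemented elements of $L/I$, i.e. it is again regular. Thus, granting the sub-claim that every regular ideal has $Id$-$BLP$, the map $B(L/I)\to B(L/(I\lor J))$ is surjective; composing it with the surjection $B(L)\to B(L/I)$ furnished by the hypothesis that $I$ has $Id$-$BLP$ yields a surjection $B(L)\to B(L/(I\lor J))$, which is exactly $Id$-$BLP$ for $I\lor J$.

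The crux is therefore the sub-claim that a regular ideal $J$ of $L$ has $Id$-$BLP$, and this elementary construction is the only real obstacle. Given $x\in L$ whose class $x/J$ is complemented in $L/J$ with complementary class $y/J$, one obtains $x\lor y\lor j_1=1$ and $x\land y\le j_2$ for suitable $j_1,j_2\in J$; regularity then lets me enlarge $j_1,j_2$ to a single complemented $f\in J\cap B(L)$ satisfying $x\lor y\lor f=1$ and $x\land y\land f'=0$, where $f'$ denotes the complement of $f$ in $L$. A short distributive computation shows that $x\land f'$ is complemented in $L$, with complement $(y\land f')\lor f$, and that $(x\land f')\lor f=x\lor f$, so that $x\land f'$ lies in $B(L)$ and lifts $x/J$. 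Everything else is routine reticulation bookkeeping. As an alternative one could bypass this computation by invoking Hochster's theorem \cite{Hochster} to realise $L(A)\simeq L(R)$ for a commutative ring $R$ and quoting the corresponding lifting-ideal result of \cite{Tar4} for the sum of a lifting ideal and a regular ideal, but the direct lattice argument above is shorter and self-contained.
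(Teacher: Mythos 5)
Your proof is correct, but it follows a genuinely different route from the paper's. The paper proves this proposition by transfer to commutative rings: it invokes Hochster's theorem to realise $L(A)\simeq L(R)$ for a ring $R$, moves $\theta$ and $\chi$ to an ideal $I$ and a regular ideal $J$ of $R$ (via Lemma 6.4), quotes Corollary 3.15 of \cite{Tar4} to conclude that $I\lor J$ is a lifting ideal, and then comes back to $A$ through the characterization Theorem 6.3, checking the condition $|B(Con(A/(\theta\lor\chi\lor\phi^{\diamond})))|\leq 2$ for every maximal congruence $\phi$ by a chain of Boolean isomorphisms. You instead stay entirely inside the reticulation: after the same reduction via Proposition 5.5 and Lemma 3.2(2), you prove the purely lattice-theoretic statement that $I\lor J$ has $Id$-$BLP$ whenever $I$ has $Id$-$BLP$ and $J$ is regular, by (i) the third-isomorphism factorization $(L/I)/\bigl((I\lor J)/I\bigr)\cong L/(I\lor J)$, (ii) the observation that $(I\lor J)/I$ is again regular, and (iii) the elementary computation showing every regular ideal has $Id$-$BLP$ (your witness $x\land f'$, with complement $(y\land f')\lor f$ and $(x\land f')\lor f = x\lor f$, checks out). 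Your argument buys self-containedness: it needs neither Hochster's theorem nor the ring-theoretic results of Tarizadeh--Sharma, and the lattice lemma you isolate is in fact the common generalization from which both the ring statement and the congruence statement follow by reticulation. What the paper's route buys is economy relative to its own infrastructure --- it reuses Theorem 6.3 and the already-cited ring literature, consistent with the paper's overall strategy of exporting results from rings to algebras --- and it exercises the ring--algebra dictionary that the rest of Section 6 depends on. One small point of care in your write-up: the enlargement of $j_1, j_2$ to a single $f\in J\cap B(L)$ uses that a finite join of complemented elements is complemented and that $J$, being an ideal, contains that join; this is standard in a distributive lattice but deserves a sentence in a final version.
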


\begin{proof} Let $R$ be a commutative ring such that the lattices $L(A)$ and $L(R)$ are identical (by the Hochster theorem). Consider an arbitrary maximal congruence $\phi$ of $A$. In order to prove that $\theta\lor\chi$ has $CBLP$ it suffices to establish the inequality $|B(Con(A/(\theta\lor\chi\lor\phi^{\diamond})))|\leq 2$ (according to Theorem 6.3).

Let $I$ be an ideal of $R$ and $M$ a maximal ideal of $R$ such that $I^{\ast}=\theta^{\ast}$ and $M^{\ast}=\phi^{\ast}$. By Lemma 6.4(1), $\chi{\ast}$ is a regular ideal of $L(A)=L(R)$, hence, by using Lemma 6.4(2) for $R$, we can find a regular ideal $J$ of $R$ such that $J^{\ast}=\chi^{\ast}$.

We observe that $\phi^{\diamond \ast}= M^{\diamond \ast}$ (by Proposition 6.1). Remind that the reticulation of any algebra in  $\mathcal{V}$ preserves the Boolean center, hence the reticulation of $A/(\theta\lor\chi\lor\phi^{\diamond})$ preserves the Boolean center. According to Proposition 4.4, Lemma 3.2(2) and Proposition 7.6 of \cite{GM2} we have the following Boolean isomorphisms:

$B(Con(A/(\theta\lor\chi\lor\phi^{\diamond})))\simeq B(L(A)/(\theta\lor\chi\lor\phi^{\diamond})^{\ast})\simeq B(L(A)/(\theta^{\ast}\lor\chi^{\ast}\lor \phi^{\diamond \ast}))\simeq B(L(R)/(I{\ast}\lor J^{\ast}\lor M^{\diamond \ast}))\simeq...\simeq B(Con(R/(I\lor J\lor M^{\diamond})))$,

therefore  $|B(Con(A/(\theta\lor\chi\lor\phi^{\diamond})))|=|B(Con(R/(I\lor J\lor M^{\diamond})))|$.

Since $\theta$ has $CBLP$ it follows that $I$ is a lifting ideal of $R$ (by two applications of Proposition 5.5) and $J$ is a regular ideal of $R$, therefore, by using Corollary 3.15 of \cite{Tar4} it follows that $I \lor J$ is a lifting ideal of $R$. Then $R/(I\lor J \lor M^{\diamond})$ has no nontrivial idempotent, i.e. $|B(Con(R/(I\lor J\lor M^{\diamond})))|\leq 2$, hence we get the desired inequality $|B(Con(A/(\theta\lor\chi\lor\phi^{\diamond})))|\leq 2$.
\end{proof}

\begin{corolar} Any regular congruence $\chi$ of $A$ has $CBLP$.
\end{corolar}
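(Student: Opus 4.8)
The plan is to obtain the corollary as an immediate specialization of Proposition 6.5, by choosing the least possible companion congruence. Concretely, I would apply Proposition 6.5 to the pair $(\Delta_A,\chi)$, where $\chi$ is the given regular congruence. Since Proposition 6.5 asserts that whenever $\theta$ has $CBLP$ and $\chi$ is regular, the join $\theta\lor\chi$ has $CBLP$, everything reduces to checking that the base congruence $\theta=\Delta_A$ has $CBLP$ and that $\Delta_A\lor\chi$ collapses to $\chi$.

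First I would verify that $\Delta_A$ has $CBLP$. This is the trivial base case: the quotient $A/{\Delta_A}$ is canonically isomorphic to $A$, so the induced Boolean map $B(p_{\Delta_A}^{\bullet}):B(Con(A))\rightarrow B(Con(A/{\Delta_A}))$ is (up to this canonical identification) the identity, hence surjective. This fact is already in use in the proof of Corollary 5.9, so I may simply invoke it. Next, since $\Delta_A$ is the bottom element of $Con(A)$, the absorption identity $\Delta_A\lor\chi=\chi$ holds. Feeding the hypotheses ``$\Delta_A$ has $CBLP$'' and ``$\chi$ is regular'' into Proposition 6.5 then yields that $\Delta_A\lor\chi=\chi$ has $CBLP$, which is exactly the claim.

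There is essentially no obstacle here: all the substantive work—the Hochster-theorem transfer to a commutative ring, the appeal to Corollary 3.15 of \cite{Tar4}, and the reticulation bookkeeping—is already packaged inside Proposition 6.5, and the only remaining verifications are the two elementary facts above. If one preferred an argument that bypasses Proposition 6.5, an alternative route would run through the reticulation: by Lemma 6.4(1) the ideal $\chi^{\ast}$ is a regular ideal of the lattice $L(A)$, so it would suffice to prove directly that every regular ideal of a bounded distributive lattice has $Id-BLP$ and then conclude via Proposition 5.5 that $\chi$ has $CBLP$. Either way the content is minimal; the cleanest presentation is the one-line specialization of Proposition 6.5 with $\theta=\Delta_A$.
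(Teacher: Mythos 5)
Your proof is correct and is essentially identical to the paper's own argument: both take $\theta=\Delta_A$ in Proposition 6.5, use that $\Delta_A$ has $CBLP$, and conclude that $\chi=\Delta_A\lor\chi$ has $CBLP$. Your added justification that $B(p_{\Delta_A}^{\bullet})$ is the identity up to canonical isomorphism is a reasonable elaboration of the paper's unexplained ``We know that $\Delta_A$ has $CBLP$.''
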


\begin{proof}  We know that $\Delta_A$ has $CBLP$. If we take $\theta=\Delta_A$ in the previous proposition it follows that $\chi= \Delta_A\lor\chi$  has $CBLP$.
\end{proof}

\begin{propozitie} Let $\theta,\chi$ be two non-coprime congruences of $A$ such that $\theta$ has $CBLP$ and $|B(Con(A/{\chi})|\leq 2$. Then $\theta\cap\chi$ has $CBLP$.
\end{propozitie}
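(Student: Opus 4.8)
The plan is to reduce the statement to the level of commutative rings, transporting all the data through the reticulation $L(A)$ and Hochster's theorem, exactly in the spirit of the proofs of Theorem 6.3 and Proposition 6.5. First I would invoke Hochster's theorem \cite{Hochster} to fix a commutative ring $R$ whose reticulation $L(R)$ is isomorphic to $L(A)$, and identify the two lattices. Setting $I=(\theta^{\ast})_{\ast}$ and $J=(\chi^{\ast})_{\ast}$ computed in $R$, Lemma 3.3(2) gives $I^{\ast}=\theta^{\ast}$ and $J^{\ast}=\chi^{\ast}$, where as usual I identify a ring ideal with its congruence.

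Next I would translate the three hypotheses into ring-theoretic statements. By two applications of Proposition 5.5 (one in $A$, one in $R$), $\theta$ has $CBLP$ if and only if $I$ is a lifting ideal of $R$. Using that the reticulation preserves the Boolean center together with the quotient property $L(A/\chi)\simeq L(A)/\chi^{\ast}$ (Proposition 7.6 of \cite{GM2}), one obtains the chain of Boolean isomorphisms $B(Con(A/\chi))\simeq B(L(A)/\chi^{\ast})=B(L(R)/J^{\ast})\simeq B(Con(R/J))$; since for a commutative ring $B(Con(R/J))$ is the Boolean algebra of idempotents of $R/J$, the hypothesis $|B(Con(A/\chi))|\leq 2$ is equivalent to $R/J$ having no nontrivial idempotent, i.e. $R/J$ connected. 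Finally, for non-coprimeness, Lemma 3.2(2) gives $(\theta\lor\chi)^{\ast}=\theta^{\ast}\lor\chi^{\ast}=I^{\ast}\lor J^{\ast}=(I+J)^{\ast}$, and a congruence equals the top precisely when its image under $(\cdot)^{\ast}$ is the whole lattice (Lemma 3.1(3), recalling that $\nabla$ is compact); hence $\theta\lor\chi\neq\nabla_A$ translates exactly to $I+J\neq R$.

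I would then establish the ring-level fact that $I\cap J$ is a lifting ideal of $R$. This is the analogue for intersections of the result of \cite{Tar4} used in Proposition 6.5, and it can also be seen directly: $R/(I\cap J)$ embeds into $R/I\times R/J$ as the fiber product over $R/(I+J)$, so an idempotent of $R/(I\cap J)$ is a pair $(a,b)$ with $a^2=a$ in $R/I$, $b^2=b$ in $R/J$, and $\bar a=\bar b$ in $R/(I+J)$; connectedness of $R/J$ forces $b\in\{0,1\}$, and lifting $a$ through the lifting ideal $I$ to an idempotent $e_0\in R$, the condition $I+J\neq R$ makes $R/(I+J)$ a nonzero ring that separates $0$ from $1$, which forces $e_0$ to reduce to $b$ modulo $J$ as well, so $e_0$ lifts $(a,b)$. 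Once $I\cap J$ is lifting, I transport the conclusion back: Lemma 3.2(1), applied in both $A$ and $R$, yields $(\theta\cap\chi)^{\ast}=\theta^{\ast}\cap\chi^{\ast}=I^{\ast}\cap J^{\ast}=(I\cap J)^{\ast}$, whence Proposition 5.5 gives that $\theta\cap\chi$ has $CBLP$ if and only if $(I\cap J)^{\ast}$ has $Id-BLP$ if and only if $I\cap J$ is a lifting ideal, which we have just proved.

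The main obstacle will be the faithful transport rather than the ring computation: one must verify that the isomorphism $L(A)\simeq L(R)$ carries $\cap$, $\lor$, and the top element correctly (so that the equivalence ``$\theta\cap\chi$ has $CBLP$'' $\Leftrightarrow$ ``$I\cap J$ lifting'' is legitimate), that the ideals $I,J$ with prescribed images $\theta^{\ast},\chi^{\ast}$ exist via Lemma 3.3(2), and above all that the two delicate hypotheses — non-coprimeness and connectedness of the quotient — pass correctly across the reticulation and the Hochster identification, since it is precisely their conjunction that drives the fiber-product lifting argument above.
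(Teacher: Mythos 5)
Your proposal is correct and its overall scaffolding is the same as the paper's: identify $L(A)$ with $L(R)$ for a commutative ring $R$ via Hochster's theorem, choose ideals $I,J$ with $I^{\ast}=\theta^{\ast}$ and $J^{\ast}=\chi^{\ast}$, translate the three hypotheses through Proposition 5.5 and the Boolean-center isomorphisms, and transfer back via $(\theta\cap\chi)^{\ast}=(I\cap J)^{\ast}$. You diverge from the paper in two places. First, for non-coprimeness the paper picks a maximal congruence $\phi\supseteq\theta\lor\chi$ and a maximal ideal $M$ with $M^{\ast}=\phi^{\ast}$ to get $I+J\subseteq M$, while you argue directly that a congruence equals $\nabla$ iff its $(\cdot)^{\ast}$-image is the whole lattice; both are valid, and yours is the more economical route. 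Second, and more substantially, the paper simply cites Proposition 3.17 of \cite{Tar4} for the ring-level fact that $I\cap J$ is a lifting ideal, whereas you prove it from scratch via the fiber-product description of $R/(I\cap J)$ inside $R/I\times R/J$. Your ring argument is sound but has one step that must be made explicit: from $e_0\equiv b \pmod{I+J}$ and $R/(I+J)\neq 0$ alone one cannot conclude $e_0\equiv b\pmod{J}$ (agreement modulo a larger ideal never implies agreement modulo a smaller one); the inference is legitimate only after observing that $e_0 \bmod J$ is itself an idempotent of the connected ring $R/J$, hence lies in $\{0,1\}$ just as $b$ does, and the images of $0$ and $1$ in the nonzero ring $R/(I+J)$ are distinct. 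With that one sentence added your treatment is complete, and it buys something the paper's does not: self-containedness, and a transparent view of exactly where $I+J\neq R$ is used (it rules out $e_0\bmod J$ and $b$ being the two distinct trivial idempotents), which the paper leaves hidden inside the citation.
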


\begin{proof} Let $R$ be a commutative ring such that $L(A)=L(R)$ and $I, J$ two ideals of $R$ such that $I^{\ast}=\theta^{\ast}$ and $J^{\ast}=\chi^{\ast}$. By two applications of Proposition 5.5 it follows that $I$ is a lifting ideal of $R$ and by using Proposition 4.4(1) and Proposition 7.6 of \cite{GM2}, one can prove that $|B(Con(A/{\chi}))|=|B(Con(R/J))| $, hence $|B(Con(R/J))|\leq 2 $, i.e $R/J$ has no nontrivial idempotents. Let  $\phi$ be a maximal congruence of $A$ such that $\theta\lor\chi\subseteq\phi$ and $M$ a maximal ideal of $R$ such that $M^{\ast}=\phi^{\ast}$  hence, by using Lemma 3.2(2), one gets

$(I\lor J)^{\ast}= I^{\ast}\lor J^{\ast} =  \theta^{\ast}\lor \chi^{\ast} = (\theta\lor \chi)^{\ast}\subseteq\phi^{\ast}= M^{\ast}$.

Applying Lemma 3.3(2) we obtain $I
\lor J=((I\lor J)^{\ast})_{\ast}\subseteq (M^{\ast})_{\ast}=M$, hence $I,J$ are non-coprime ideals of $R$. In accordance with Proposition 3.17 of \cite{Tar4}, $I\cap J$ is a lifting ideal of $R$. By Lemma 3.3(1) we have $(\theta\cap\chi)^{\ast}=\theta^{\ast}\cap\chi^{\ast}=I^{\ast}\cap J^{\ast} = (I\cap J)^{\ast}$. By two applications of Proposition 5.5 it follows that $\theta\cap\chi$ has $CBLP$.

\end{proof}

Recall from \cite{a} that a clean ring is a ring $R$ such that any element of $R$ is the sum of a unit and an idempotent. We know from \cite{a} that a commutative ring $R$ is clean if and only if any ideal of $R$ is a lifting ideal.

\begin{propozitie} Let $\theta$ be a congruence of $A$ such that $\theta\subseteq Rad(A)$. If $A/{\theta}$ has $CBLP$ then $A$ has $CBLP$.

\end{propozitie}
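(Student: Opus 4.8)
The plan is to combine the reduction machinery of Section 5 with the ring translation used throughout this section. First I would dispose of arbitrary congruences in favour of congruences lying above $\theta$. Given any $\delta\in Con(A)$, put $\chi=\delta\lor\theta$. Because $\theta\subseteq Rad(A)$, every maximal congruence of $A$ contains $\theta$, so a maximal congruence contains $\chi$ precisely when it contains $\delta$; that is, $Max(A)\cap[\delta)_A=Max(A)\cap[\chi)_A$. Theorem 5.12 then reduces the $CBLP$ of $\delta$ to the $CBLP$ of $\chi$. Hence it suffices to prove that every congruence $\chi$ with $\theta\subseteq\chi$ has $CBLP$.

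For such a $\chi$ I would exploit the factorisation $p_{\chi}=p_{\chi/{\theta}}\circ p_{\theta}$ of the canonical surjections, which at the level of Boolean centres gives $B(p_{\chi}^{\bullet})=B(p_{\chi/{\theta}}^{\bullet})\circ B(p_{\theta}^{\bullet})$. Since $\chi/{\theta}$ is a congruence of $A/{\theta}$ and $A/{\theta}$ has $CBLP$ by hypothesis, the map $B(p_{\chi/{\theta}}^{\bullet})$ is surjective. Consequently the surjectivity of $B(p_{\chi}^{\bullet})$, i.e. the $CBLP$ of $\chi$, follows as soon as $B(p_{\theta}^{\bullet})$ is surjective, that is, as soon as $\theta$ itself has $CBLP$. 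In this way the whole statement is reduced to the single assertion that $\theta$ has $CBLP$, equivalently that every complemented congruence of $A/{\theta}$ lifts to a complemented congruence of $A$.

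This last point is the crux of the argument, and the place where I would invest the technical effort. To establish it I would pass to the ring side, as in the rest of the section: by Hochster's theorem \cite{Hochster} choose a commutative ring $R$ with $L(R)\cong L(A)$, let $I$ be the ideal of $R$ with $I^{\ast}=\theta^{\ast}$, and observe that $\theta\subseteq Rad(A)$ translates, via the homeomorphism $Max(A)\cong Max_{Id}(L(A))$ of Remark 3.7 together with Proposition 3.4, into the inclusion $I\subseteq J(R)$ of $I$ in the Jacobson radical of $R$; meanwhile $A/{\theta}$ having $CBLP$ translates, by Proposition 5.5 and the reticulation identity $L(A/{\theta})\cong L(A)/{\theta^{\ast}}$ \cite{GM2}, into $R/I$ being a clean ring. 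I would then invoke the corresponding lifting result of Tarizadeh and Sharma \cite{Tar4} and transport the conclusion back through Proposition 5.5 to obtain that $A$ has $CBLP$.

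I expect the genuinely delicate step to be exactly this lifting of complemented congruences modulo $\theta$: neither $\theta\subseteq Rad(A)$ nor the $CBLP$ of $A/{\theta}$ supplies such liftings by a formal argument, so the heart of the matter is to verify, at each maximal congruence $\phi$, the criterion $|B(Con(A/{(\theta\lor\phi^{\diamond})}))|\leq 2$ of Theorem 6.3 (equivalently, the cleanness of $R/(I\lor M^{\diamond})$ for each maximal ideal $M$ with $M^{\ast}=\phi^{\ast}$, as in Lemma 6.2). This local analysis at the maximal spectrum is where the decisive control over $\theta$ must come from.
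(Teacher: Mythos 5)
Your two preliminary reductions are sound: since $\theta\subseteq Rad(A)$, every maximal congruence contains $\theta$, so for $\chi=\delta\lor\theta$ one indeed has $Max(A)\cap[\delta)_A=Max(A)\cap[\chi)_A$ and Theorem 5.12 applies; and the factorisation $B(p_{\chi}^{\bullet})=B(p_{\chi/{\theta}}^{\bullet})\circ B(p_{\theta}^{\bullet})$ (via the second isomorphism theorem) correctly shows that everything would follow once $\theta$ itself has $CBLP$. But the proposal stops being a proof exactly there: the statement ``$\theta$ has $CBLP$'' is never established. You defer it to ``the corresponding lifting result of Tarizadeh and Sharma'' and to checking the criterion $|B(Con(A/{(\theta\lor\phi^{\diamond})}))|\leq 2$ of Theorem 6.3, but no argument is given, and your closing paragraph concedes that neither $\theta\subseteq Rad(A)$ nor the $CBLP$ of $A/{\theta}$ yields it formally. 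Note also that under your own reductions ``$\theta$ has $CBLP$'' is \emph{equivalent} to the conclusion ``$A$ has $CBLP$'' (one direction is your argument, the other is trivial since $\theta\in Con(A)$), so the proposal in effect reduces the proposition to itself; all of the mathematical content lies in the step left open.

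This gap is not merely unfilled but unfillable by the route you indicate: there is no result of Nicholson or Tarizadeh--Sharma deriving idempotent lifting modulo $I$ from ``$I\subseteq Rad(R)$ and $R/I$ clean'' alone --- Proposition 1.5 of \cite{a} and Corollary 6.6 of \cite{Tar4} both \emph{assume} that idempotents lift modulo $I$ (equivalently, that $I$ is a lifting ideal). Concretely, take $R=\mathbb{Z}_{(2)}\cap\mathbb{Z}_{(3)}$ and $I=Rad(R)=6R$: then $R/I\cong\mathbb{Z}/6\mathbb{Z}$ is clean, yet the idempotent $\bar{3}$ of $R/I$ does not lift (a domain has only trivial idempotents), so $I$ is not a lifting ideal and $R$ is not clean. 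Thus the hypotheses of the proposition do not yield the crux step you need; the version carrying the missing assumption as an explicit hypothesis (``$\theta$ has $CBLP$'') is Theorem 6.10 of the paper. For comparison, the paper's own proof of this proposition skips your two reductions and runs precisely the translation you sketch --- Hochster's theorem, $I^{\ast}=\theta^{\ast}$, $I\subseteq Rad(R)$, $L(A/{\theta})\simeq L(R/I)$ so that $R/I$ is clean --- and then concludes by citing Proposition 1.5 of \cite{a} or Corollary 6.6 of \cite{Tar4}, thereby passing over the very same unverified lifting hypothesis. So your diagnosis of where the difficulty sits is accurate, but neither your proposal nor the argument printed in the paper actually closes it.
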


\begin{proof} Let $R$ be a commutative ring such that $L(A)= L(R)$ and $I$ an ideal of $R$ such that $I^{\ast}=\theta^{\ast}$. By using Lemma 3.3 and the bijective correspondence between $Max(A)$ and $Max(R)$ (cf. Remark 3.7) we can prove that $I\subseteq Rad(R)$.

According to Proposition 7.6 of \cite{GM2}, we have the following Boolean isomorphisms: $L(A/{\theta})\simeq L(A)/\theta^{\ast}=L(R)/I^{\ast}=L(R/I)$. By applying Proposition 5.5, the following implications hold:

$A/{\theta}$ has $CBLP$ $\Rightarrow$ $L(A/{\theta})$ has $Id-BLP$

 $\hspace{2.4cm}$                       $\Rightarrow$ $L(R/{\theta})$ has $Id-BLP$

$\hspace{2.4cm}$                         $\Rightarrow$ $R/I$ is a clean ring.

In accordance with Proposition 1.5 of \cite{a} or Corollary 6.6 of \cite{Tar4} it results that $R$ is a clean ring. Thus, a new application of Proposition 5.5 shows that the algebra $A$ has $CBLP$.

\end{proof}

Following \cite{GKM}, the algebra $A$ is said to be $B$-normal if for all coprime congruences $\chi,\varepsilon$ of $A$ there exist $\alpha, \beta\in B(Con(A))$ such that $\chi\lor\alpha=\varepsilon\lor\beta=\nabla_A$ and $[\alpha,\beta]=\Delta_A$. By using Theorem 6.3 one can give a short proof of the following theorem from \cite{GKM}.

\begin{teorema}
 The following are equivalent:
\usecounter{nr}
\begin{list}{(\arabic{nr})}{\usecounter{nr}}
\item The algebra $A$ has $CBLP$;
\item $A$ is a $B$-normal algebra.
\end{list}
\end{teorema}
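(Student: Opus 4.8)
The plan is to deduce both implications from the characterization of Theorem 6.3, so that no new topological or reticulation machinery is needed. The only piece of algebraic calculus I would isolate at the outset is the two-sided distributivity attached to complemented congruences: for $\alpha\in B(Con(A))$ one has $\theta\cap\alpha=[\theta,\alpha]$, and since the commutator distributes over arbitrary joins this yields $(\theta\lor\vartheta)\cap\alpha=(\theta\cap\alpha)\lor(\vartheta\cap\alpha)$ for all $\theta,\vartheta\in Con(A)$. Combined with the complement laws $\alpha\lor\neg\alpha=\nabla_A$ and $\alpha\cap\neg\alpha=\Delta_A$, and with $[\alpha,\beta]=\alpha\cap\beta$ for $\alpha,\beta\in B(Con(A))$, this is the whole toolkit I expect to use.

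For $(1)\Rightarrow(2)$, I would start from a pair of coprime congruences $\chi,\varepsilon$ and set $\theta=[\chi,\varepsilon]$, which equals $\chi\cap\varepsilon$ by Lemma 2.3(1). Since $A$ has $CBLP$, the congruence $\theta$ has $CBLP$, so I may apply condition $(2)$ of Theorem 6.3 to the coprime pair $\phi=\chi$, $\psi=\varepsilon$ (indeed $[\chi,\varepsilon]\subseteq\theta$ trivially). This produces $\alpha\in B(Con(A))$ with $\alpha\subseteq\theta\lor\chi$ and $\neg\alpha\subseteq\theta\lor\varepsilon$; absorption collapses $\theta\lor\chi=(\chi\cap\varepsilon)\lor\chi$ to $\chi$ and likewise $\theta\lor\varepsilon$ to $\varepsilon$, so in fact $\alpha\subseteq\chi$ and $\neg\alpha\subseteq\varepsilon$. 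Taking the witnessing pair $(\neg\alpha,\alpha)$ then gives $\chi\lor\neg\alpha\supseteq\alpha\lor\neg\alpha=\nabla_A$ and $\varepsilon\lor\alpha\supseteq\neg\alpha\lor\alpha=\nabla_A$, while $[\neg\alpha,\alpha]=\Delta_A$; hence $A$ is $B$-normal.

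For $(2)\Rightarrow(1)$, I would fix an arbitrary congruence $\theta$ and verify condition $(2)$ of Theorem 6.3. Given coprime $\phi,\psi$ with $[\phi,\psi]\subseteq\theta$, $B$-normality supplies $\alpha_0,\beta_0\in B(Con(A))$ with $\phi\lor\alpha_0=\psi\lor\beta_0=\nabla_A$ and $[\alpha_0,\beta_0]=\alpha_0\cap\beta_0=\Delta_A$. I would then set $\alpha=\neg\alpha_0$. Distributing $\neg\alpha_0$ across $\nabla_A=\phi\lor\alpha_0$ gives $\neg\alpha_0=(\neg\alpha_0\cap\phi)\lor(\neg\alpha_0\cap\alpha_0)=\neg\alpha_0\cap\phi\subseteq\phi\subseteq\theta\lor\phi$, so $\alpha\subseteq\theta\lor\phi$. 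From $\alpha_0\cap\beta_0=\Delta_A$ I get $\alpha_0\subseteq\neg\beta_0$, and the same distribution across $\nabla_A=\psi\lor\beta_0$ yields $\neg\beta_0\subseteq\psi\subseteq\theta\lor\psi$; hence $\neg\alpha=\alpha_0\subseteq\theta\lor\psi$. Thus condition $(2)$ of Theorem 6.3 holds for $\theta$, so $\theta$ has $CBLP$, and since $\theta$ was arbitrary, $A$ has $CBLP$.

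I expect the main obstacle to be purely book-keeping: choosing $\theta=[\chi,\varepsilon]$ in the first implication so that absorption cleanly reduces $\theta\lor\chi$ to $\chi$, and correctly identifying which of $\alpha$, $\neg\alpha$ plays which role when matching the asymmetric conclusion of Theorem 6.3 against the symmetric $B$-normality condition. The one step that genuinely uses the structure of $Con(A)$ rather than a generic Boolean algebra is the distributive identity $(\theta\lor\vartheta)\cap\alpha=(\theta\cap\alpha)\lor(\vartheta\cap\alpha)$ for complemented $\alpha$; I would record it first and then treat the remaining manipulations as routine.
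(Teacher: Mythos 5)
Your proposal is correct and takes essentially the same route as the paper: both implications are deduced from Theorem 6.3, with $\theta=[\chi,\varepsilon]$ and absorption in one direction, and the complemented witnesses $\neg\alpha_0\subseteq\phi$, $\alpha_0\subseteq\neg\beta_0\subseteq\psi$ in the other. The only cosmetic differences are that you invoke condition $(2)$ of Theorem 6.3 where the paper invokes the equivalent condition $(3)$, and that you spell out via the distributive law for complemented congruences the step the paper labels ``easy to see.''
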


\begin{proof} $(1)\Rightarrow(2)$ Let  $\chi,\varepsilon$ be two coprime congruences of $A$. By hypothesis, $\theta=[\chi,\varepsilon]$ has $CBLP$, so one can apply the condition $(3)$ of Theorem 6.3, so there exists $\alpha\in B(Con(A))$ such that $\alpha\subseteq\theta\lor\chi=[\chi,\varepsilon]\lor \chi=\chi$ and $\neg\alpha\subseteq\theta\lor\varepsilon=[\chi,\varepsilon]\lor \varepsilon=\varepsilon$. Denoting $\beta=\neg\alpha$ we have $\nabla_A=\alpha\lor \neg\alpha\subseteq\chi\lor\beta$ and $\nabla_A=\neg\alpha\lor\alpha\subseteq\varepsilon\lor\alpha$, so $\chi\lor\beta=\varepsilon\lor\alpha=\nabla_A$ and $[\alpha,\beta]= [\alpha,\neg\alpha]=\Delta_A$.

$(2)\Rightarrow(1)$ Assume that $\theta\in Con(A)$ and $\chi,\varepsilon$ are two coprime congruences of $A$ such that $\theta=[\chi,\varepsilon]$. Since $A$ is a $B$-normal algebra and $\chi,\varepsilon$ are two coprime we can find $\alpha,\beta\in B(Con(A))$ such that $\chi\lor\alpha=\varepsilon\lor\beta=\nabla_A$ and $[\alpha,\beta]=\Delta_A$. It is easy to see that $\neg\alpha\subseteq\chi$ and $\alpha\subseteq\neg\beta\subseteq\varepsilon$. It follows that $\alpha\subseteq\varepsilon=\varepsilon\lor[\chi,\varepsilon]=\varepsilon\lor\theta$ and $\neg\alpha\subseteq\chi=\chi\lor[\chi,\varepsilon]=\chi\lor\theta$, so $\theta$ has $CBLP$ (by applying Theorem 6.3).

\end{proof}

\begin{teorema} Let $\theta$ be a congruence of $A$ such that $\theta\subseteq Rad(A)$ and $\theta$ has $CBLP$. If $A/{\theta}$ is $B$-normal then $A$ is $B$-normal.
\end{teorema}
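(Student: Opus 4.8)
The plan is to verify $B$-normality of $A$ directly from the definition, by solving the $B$-normality problem in the quotient $A/\theta$, lifting the solution, and then repairing it with the two remaining hypotheses. So let $\chi,\varepsilon\in Con(A)$ be coprime, i.e. $\chi\lor\varepsilon=\nabla_A$. Applying $p_\theta^{\bullet}$ and using $p_\theta^{\bullet}(\gamma)=(\gamma\lor\theta)/\theta$, the images $\bar\chi=(\chi\lor\theta)/\theta$ and $\bar\varepsilon=(\varepsilon\lor\theta)/\theta$ satisfy $\bar\chi\lor\bar\varepsilon=\nabla_{A/\theta}$, so they are coprime in $A/\theta$. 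Since $A/\theta$ is $B$-normal, there are $\bar\alpha,\bar\beta\in B(Con(A/\theta))$ with $\bar\chi\lor\bar\alpha=\bar\varepsilon\lor\bar\beta=\nabla_{A/\theta}$ and $[\bar\alpha,\bar\beta]=\Delta_{A/\theta}$. Because $\theta$ has $CBLP$, the Boolean morphism $B(p_\theta^{\bullet})$ is surjective, so I can choose $\alpha,\beta\in B(Con(A))$ with $(\alpha\lor\theta)/\theta=\bar\alpha$ and $(\beta\lor\theta)/\theta=\bar\beta$. These $\alpha,\beta$ are the candidate witnesses for the $B$-normality of $A$.

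Next I would check the two join conditions. From $\bar\chi\lor\bar\alpha=\nabla_{A/\theta}$ and the lattice isomorphism $s_\theta$ between $Con(A/\theta)$ and $[\theta)_A$, one gets $\chi\lor\alpha\lor\theta=\nabla_A$. Here the hypothesis $\theta\subseteq Rad(A)$ enters for the first time: if $\chi\lor\alpha\neq\nabla_A$ then $\chi\lor\alpha\subseteq\psi$ for some $\psi\in Max(A)$ (as $\nabla_A$ is compact), while $\theta\subseteq Rad(A)\subseteq\psi$, forcing $\chi\lor\alpha\lor\theta\subseteq\psi\neq\nabla_A$, a contradiction; hence $\chi\lor\alpha=\nabla_A$. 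The identical argument gives $\varepsilon\lor\beta=\nabla_A$.

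The main obstacle is the commutator condition $[\alpha,\beta]=\Delta_A$, because the lifting controls $\alpha,\beta$ only modulo $\theta$. By the commutator identity $(2.3)$ we have $([\alpha,\beta]\lor\theta)/\theta=[\bar\alpha,\bar\beta]=\Delta_{A/\theta}=\theta/\theta$, hence $[\alpha,\beta]\subseteq\theta\subseteq Rad(A)$, which by itself is weaker than $[\alpha,\beta]=\Delta_A$. The key is that $\alpha,\beta$ are complemented: since $\beta\in B(Con(A))$ one has $[\alpha,\beta]=\alpha\cap\beta$, and $\alpha\cap\beta$ is again complemented, so $\alpha\cap\beta\in B(Con(A))$. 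As $\alpha\cap\beta\subseteq Rad(A)$, we get $B(p_{Rad(A)}^{\bullet})(\alpha\cap\beta)=((\alpha\cap\beta)\lor Rad(A))/Rad(A)=\Delta_{A/Rad(A)}$, so the injectivity of $B(p_{Rad(A)}^{\bullet})$ established in Lemma 5.16 forces $\alpha\cap\beta=\Delta_A$, i.e. $[\alpha,\beta]=\Delta_A$. Thus $\alpha,\beta$ witness the $B$-normality of $A$, completing the argument.

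I would also record a shorter but less self-contained route: $A/\theta$ being $B$-normal gives that $A/\theta$ has $CBLP$ by Theorem 6.10, whence $A$ has $CBLP$ by Proposition 6.9 (using $\theta\subseteq Rad(A)$), and then $A$ is $B$-normal again by Theorem 6.10. This chain, however, does not use the hypothesis that $\theta$ itself has $CBLP$, whereas the direct proof above makes essential use of all three hypotheses, which is why I would present it as the primary argument.
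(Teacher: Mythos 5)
Your primary argument is correct, and it takes a genuinely different route from the paper. The paper proves this theorem by transfer: it first applies its Theorem 6.9 ($A$ has $CBLP$ iff $A$ is $B$-normal) to get that $A/\theta$ has $CBLP$, then uses Hochster's theorem to realize $L(A)$ as the reticulation $L(R)$ of a commutative ring $R$, translates the hypotheses into ``$I\subseteq Rad(R)$, $I$ a lifting ideal, $R/I$ clean'', invokes Corollary 6.6 of Tarizadeh--Sharma to conclude that $R$ is clean, transfers back to get that $A$ has $CBLP$, and applies Theorem 6.9 once more. Your proof instead verifies the definition of $B$-normality directly: push a coprime pair to $A/\theta$, take Boolean witnesses there, lift them through $B(p_\theta^\bullet)$ (the only place ``$\theta$ has $CBLP$'' is used), repair the join conditions via $\theta\subseteq Rad(A)$ together with compactness of $\nabla_A$, and repair the commutator condition by noting that $[\alpha,\beta]=\alpha\cap\beta\in B(Con(A))$ is contained in $Rad(A)$, hence equals $\Delta_A$ by the injectivity of $B(p^\bullet_{Rad(A)})$ (Lemma 5.16, whose proof is the same maximal-congruence argument as Lemma 7.2(1)). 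Every step is backed by facts stated in the paper: identity $(2.3)$, the Section 4 identity $[\theta,\alpha]=\theta\cap\alpha$ for complemented $\alpha$, and the lattice isomorphism $s_\theta$. What your route buys is a purely algebraic, self-contained proof that avoids Hochster's theorem and all external ring-theoretic results, and that makes essential use of all three hypotheses; what the paper's route buys is brevity given its machinery and a tight parallel with the ring-theoretic original. Your ``shorter route'' is essentially the paper's own proof (your references to Theorem 6.10 and Proposition 6.9 correspond to the paper's Theorem 6.9 and Proposition 6.8), and your remark that it never uses the hypothesis that $\theta$ has $CBLP$ touches a real subtlety: the paper's Proposition 6.8 is stated without any lifting hypothesis, while the ring results its proof cites (Nicholson's Proposition 1.5, Corollary 6.6 of Tarizadeh--Sharma) do require idempotents to lift modulo the ideal --- which is exactly why the paper's proof of the present theorem re-inserts the fact that $I$ is a lifting ideal. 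Your decision to make the direct argument primary is therefore well judged.
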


\begin{proof} By Theorem 6.9, the quotient algebra $A/{\theta}$ has $CBLP$. Let $R$ be a commutative ring and $I$ an ideal of $R$ such that $L(A)=L(R)$ and $\theta^{\ast}=I^{\ast}$. Assume that $M$ is a maximal ideal of $R$, so $M^{\ast}=\phi^{\ast}$, for some maximal congruence $\phi$ of $A$. Thus $\theta\subseteq Rad(A)\subseteq \phi$, hence $I^{\ast}= \theta^{\ast}\subseteq \phi^{\ast}=M^{\ast}$. By applying Lemma 3.3(2), we get $I=(I^{\ast})_{\ast}\subseteq (M^{\ast})_{\ast}= M$. We have proven that $I\subseteq Rad(R)$. On the other hand, by Proposition 7.6 of \cite{GM2} we get the following lattice isomorphisms: $L(A/{\theta})\simeq L(A)/\theta^{\ast} = L(R)/M^{\ast}=L(R/I)$. According to Proposition 5.5, it results that $R/I$ has $CBLP$, so it is a clean ring. By hypothesis, $\theta$ has $CBLP$, hence $I$ is a lifting ideal of $R$ (by two applications of Proposition 5.5).

We remark that $R$ and $I$ verifies the hypotheses of Corollary 6.6 of \cite{Tar4}, so $R$ is a clean ring. By two applications of Proposition 5.5 it follows that $A$ has $CBLP$. A new application of Theorem 6.9 shows that $A$ is $B$-normal.

\end{proof}

\section{Lifting orthogonal sets of Boolean congruences}

\hspace{0.5cm} In this section we shall prove a characterization theorem for congruences of algebras that have $CBLP$.

Let us consider a semidegenerate congruence  modular variety $\mathcal{V}$ such that for any algebra $A\in \mathcal{V}$, the set $K(A)$ of finitely generated congruences of $A$ is closed under the commutator operation.

Let $u:A\rightarrow A'$ be a morphism of algebras in the variety $\mathcal{V}$ and $\Omega'$ a subset of $Con(A')$. We say that the morphism $u$ lifts $\Omega'$ to a subset  $\Omega$ of $Con(A)$ if $\Omega'= \{u^{\bullet}(\alpha)|\alpha\in \Omega\}$. The morphism $u$ lifts the Boolean congruences if it lifts $B(Con(A'))$ to $B(Con(A))$. Of course, a congruence $\theta$ of an algebra $A\in \mathcal{V}$ has $CBLP$ if and only if $p_{\theta}:A\rightarrow A/{\theta}$ lifts the Boolean congruences.

Recall that a subset $E$ of a Boolean algebra $B$ is called an orthogonal set if $e\land f = 0$, for all distinct $e,f\in E$. Then an orthogonal set of Boolean congruences of an algebra $A$ is a subset $\Omega$ of $B(Con(A))$ such that $[\alpha,\beta]= \alpha\cap \beta = \Delta_A$, for all distinct $\alpha,\beta\in \Omega$.

\begin{teorema} If the morphism $u:A\rightarrow A'$ lifts the Boolean congruences and $\Omega'$ is a countable orthogonal set in $B(Con(A'))$ then there exists a countable orthogonal set $\Omega$ in $B(Con(A))$ such that $u$ lifts $\Omega'$ to $\Omega$.
\end{teorema}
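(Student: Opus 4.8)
The plan is to build the lift by a straightforward induction, enumerating the countable orthogonal set as $\Omega' = \{\beta_n' : n\geq 0\}$ (countability is exactly what lets me index the construction over the naturals; a finite $\Omega'$ is handled by stopping the induction early). At stage $n$ I would produce $\alpha_n\in B(Con(A))$ with $u^{\bullet}(\alpha_n)=\beta_n'$ in such a way that $\alpha_0,\dots,\alpha_n$ are pairwise orthogonal, and finally set $\Omega=\{\alpha_n : n\geq 0\}$. Two structural facts about $u^{\bullet}$ will drive everything: since $u^{\bullet}$ is the left adjoint of $u^{*}$ it preserves arbitrary joins; and, as recorded in Section~5 for the canonical surjections and established in \cite{GKM}, it restricts to a Boolean morphism $B(u^{\bullet}):B(Con(A))\rightarrow B(Con(A'))$, so it commutes with $\lor$, $\cap$ and $\neg$ on the Boolean centers.

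For the base case, the hypothesis that $u$ lifts the Boolean congruences gives some $\alpha_0\in B(Con(A))$ with $u^{\bullet}(\alpha_0)=\beta_0'$. For the inductive step, assume $\alpha_0,\dots,\alpha_{n-1}$ are already constructed, put $s=\bigvee_{i<n}\alpha_i\in B(Con(A))$, and use the hypothesis again to pick some $\gamma\in B(Con(A))$ with $u^{\bullet}(\gamma)=\beta_n'$. The key move is to correct $\gamma$ by intersecting with the complement of the previous partial join: I would set $\alpha_n:=\gamma\cap\neg s=[\gamma,\neg s]$, which again lies in $B(Con(A))$. Orthogonality with the earlier terms is then immediate, since for $i<n$ we have $\alpha_i\leq s$, whence $\alpha_i\cap\alpha_n\leq s\cap\neg s=\Delta_A$, i.e. $[\alpha_i,\alpha_n]=\Delta_A$.

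What remains is to check that the correction has not destroyed the lift, namely $u^{\bullet}(\alpha_n)=\beta_n'$. Applying $B(u^{\bullet})$ together with join-preservation yields $u^{\bullet}(\alpha_n)=u^{\bullet}(\gamma)\cap\neg u^{\bullet}(s)=\beta_n'\cap\bigcap_{i<n}\neg\beta_i'$. Here the orthogonality of $\Omega'$ enters decisively: from $\beta_n'\cap\beta_i'=\Delta_{A'}$ one gets $\beta_n'\leq\neg\beta_i'$ for every $i<n$, so $\beta_n'\cap\bigcap_{i<n}\neg\beta_i'=\beta_n'$ and hence $u^{\bullet}(\alpha_n)=\beta_n'$. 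This closes the induction, and the resulting $\Omega$ is a countable orthogonal subset of $B(Con(A))$ satisfying $\Omega'=\{u^{\bullet}(\alpha) : \alpha\in\Omega\}$, i.e. $u$ lifts $\Omega'$ to $\Omega$.

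I expect the only genuine obstacle to be the assertion that $u^{\bullet}$ preserves meets and complements of Boolean congruences, that is, that $B(u^{\bullet})$ really is a Boolean morphism; the join-preservation is free from the adjunction $u^{\bullet}\dashv u^{*}$, but the identities $u^{\bullet}(\gamma\cap\neg s)=u^{\bullet}(\gamma)\cap u^{\bullet}(\neg s)$ and $u^{\bullet}(\neg s)=\neg u^{\bullet}(s)$ are precisely what make the correction step valid, and they should be invoked from \cite{GKM} (the analogue of the fact, cited in Section~5, that $p_{\theta}^{\bullet}$ induces the Boolean morphism $B(p_{\theta}^{\bullet})$). Everything else is routine Boolean-algebra manipulation inside $B(Con(A))$ and $B(Con(A'))$.
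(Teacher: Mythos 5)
Your proposal is correct and follows essentially the same route as the paper: enumerate $\Omega'$, lift each $\beta_{n+1}'$ to some $\gamma\in B(Con(A))$, correct it to $\alpha_{n+1}=\gamma\cap\neg\bigl(\bigvee_{i\leq n}\alpha_i\bigr)$, and use the orthogonality of $\Omega'$ (giving $\beta_{n+1}'\leq\neg\bigvee_{i\leq n}\beta_i'$) together with the fact that $u^{\bullet}|_{B(Con(A))}$ is a Boolean morphism to see that the correction still maps to $\beta_{n+1}'$. The paper likewise invokes that Boolean-morphism property (from \cite{GKM}, as in Section 5 for $p_{\theta}^{\bullet}$) without reproving it, so the one point you flagged as a possible obstacle is handled exactly as you suggest.
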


\begin{proof} Let $\Omega'= \{\beta_1,...,\beta_n,...\}$ be a countable orthogonal subset of the Boolean algebra $B(Con(A'))$. We shall construct by induction a countable orthogonal subset $\Omega=\{\alpha_1,...,\alpha_n,...\}$ of the Boolean algebra $B(Con(A))$ such that the morphism $u$ lifts $\Omega'$ to $\Omega$.

Assume that $\{\alpha_1,...,\alpha_n\}$ is an orthogonal subset of $B(Con(A))$ such that $u^{\bullet}(\alpha_i) = \beta_i$, for all $i=1\cdots,n$. Since $u:A\rightarrow A'$ lifts the Boolean congruences there exists $\alpha\in B(Con(A))$ such that $u^{\bullet}(\alpha)= \beta_{n+1}$. Let us denote $\alpha_{n+1}=\alpha\land \neg(\bigvee_{i=1}^n\alpha_i)$, so $\{\alpha_1,...,\alpha_{n+1}\}$ is an orthogonal subset of $B(Con(A))$. Since $\{\beta_1,...,\beta_{n+1}\}$ is an orthogonal subset of $B(Con(A'))$ we have $\beta_{n+1}\leq\neg(\bigvee_{i=1}^n\beta_i)$, therefore, by taking into account that $u^{\bullet}|_{B(Con(A))}$ is a Boolean morphism, we get

$u^{\bullet}(\alpha_{n+1})= u^{\bullet}(\alpha)\land \neg(\bigvee_{i=1}^n u^{\bullet}(\alpha_i))=\beta_{n+1}\land\neg(\bigvee_{i=1}^n\beta_i) = \beta_{n+1}$.

\end{proof}

Recall that for all $\alpha,\beta\in B(Con(A))$ we denote $\alpha -\beta = \alpha\cap \neg\beta$.

\begin{lema}

\usecounter{nr}
\begin{list}{(\arabic{nr})}{\usecounter{nr}}
\item If $\alpha\in B(Con(A))$ then $\alpha\subseteq Rad(A)$ implies $\alpha=\Delta_A$;
\item If $\alpha,\beta\in B(Con(A))$ then $\alpha-\beta\subseteq Rad(A)$ implies $\alpha=\beta$.
\end{list}
\end{lema}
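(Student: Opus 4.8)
The plan is to establish (1) directly and then obtain (2) as a formal consequence of (1) together with the Boolean-algebra structure of $B(Con(A))$; essentially all of the content sits in (1), and it reuses the mechanism already exploited in the injectivity proof of Lemma 5.16.

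For (1) I would argue by contradiction. Fix $\alpha\in B(Con(A))$ with $\alpha\subseteq Rad(A)$ and write $\neg\alpha$ for its Boolean complement, so that $\alpha\lor\neg\alpha=\nabla_A$ and $\alpha\cap\neg\alpha=\Delta_A$. Suppose $\alpha\neq\Delta_A$. Then $\neg\alpha\neq\nabla_A$, since $\neg\alpha=\nabla_A$ would give $\alpha=\alpha\cap\nabla_A=\alpha\cap\neg\alpha=\Delta_A$. Because $\nabla_A$ is compact (this is where semidegeneracy enters), every proper congruence lies under a maximal one, so there is $\phi\in Max(A)$ with $\neg\alpha\subseteq\phi$. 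On the other hand $\alpha\subseteq Rad(A)=\bigcap Max(A)\subseteq\phi$, whence $\nabla_A=\alpha\lor\neg\alpha\subseteq\phi$, contradicting $\phi\neq\nabla_A$. Hence $\alpha=\Delta_A$.

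For (2) I would reduce to (1). Since $B(Con(A))$ is a Boolean algebra, it is closed under complement and meet, so the difference $\alpha-\beta$ is again a member of $B(Con(A))$. The hypothesis $\alpha-\beta\subseteq Rad(A)$ then lets me apply (1) to the complemented congruence $\alpha-\beta$, forcing $\alpha-\beta=\Delta_A$; in the Boolean algebra $B(Con(A))$ the vanishing of the difference is exactly the relation $\alpha=\beta$, which is the assertion of (2). Thus (2) is purely formal once (1) is available.

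The only genuinely nontrivial step is the passage in (1) from $\alpha\neq\Delta_A$ to a maximal congruence containing $\neg\alpha$: this is precisely where compactness of $\nabla_A$ and the identity $Rad(A)=\bigcap Max(A)$ are used, and it is exactly the mechanism that prevents a nonzero complemented congruence from lying inside the radical. I do not expect any of the remaining manipulations to present an obstacle.
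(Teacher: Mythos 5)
Your proof of part (1) is correct and is essentially the paper's own argument: assume $\alpha\neq\Delta_A$, use compactness of $\nabla_A$ to place $\neg\alpha$ under a maximal congruence $\phi$, and derive the contradiction $\nabla_A=\alpha\lor\neg\alpha\subseteq\phi$. The extra details you supply (why $\neg\alpha\neq\nabla_A$, where semidegeneracy enters) are fine.

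The gap is in your part (2), in the final step. You claim that in the Boolean algebra $B(Con(A))$ ``the vanishing of the difference is exactly the relation $\alpha=\beta$''. That is false: with the paper's definition $\alpha-\beta=\alpha\cap\neg\beta$, the equality $\alpha-\beta=\Delta_A$ is equivalent to $\alpha\subseteq\beta$, not to $\alpha=\beta$. Concretely, $\alpha=\Delta_A$ and $\beta=\nabla_A$ give $\alpha-\beta=\Delta_A\subseteq Rad(A)$ yet $\alpha\neq\beta$. So applying (1) to $\alpha-\beta$ only yields $\alpha\subseteq\beta$; to get equality you must also apply (1) to $\beta-\alpha$, which requires the additional hypothesis $\beta-\alpha\subseteq Rad(A)$. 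To be fair, this defect is present in the statement of the lemma itself (the paper's entire proof of (2) is ``This assertion follows from (1)'', so it silently makes the same leap), and the counterexample above shows (2) as literally written fails for every nontrivial algebra. The version that is actually needed and used later (Theorems 7.3 and 7.4) is the symmetric one: there $p_{\theta}^{\bullet}(\alpha)=p_{\theta}^{\bullet}(\beta)$, hence both $\alpha-\beta$ and $\beta-\alpha$ lie in $\theta\subseteq Rad(A)$, and two applications of (1) give $\alpha\subseteq\beta$ and $\beta\subseteq\alpha$, i.e. $\alpha=\beta$. Your write-up should either interpret ``$-$'' as the symmetric difference or add the hypothesis on $\beta-\alpha$; as it stands, the last sentence of your argument for (2) asserts a false equivalence.
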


\begin{proof} $(1)$ Let $\alpha$ be a complemented congruence of $A$ such that $\alpha\subseteq Rad(A)$. Assume by absurdum that $\alpha\neq \Delta_A$, hence $\neg\alpha\neq\nabla_A$, so $\neg\alpha\subseteq \phi$, for some maximal congruence $\phi$. On the other hand, we have $\alpha\subseteq Rad(A)\subseteq\phi$, hence we obtain $\nabla_A=\alpha\lor\neg\alpha\subseteq\phi$, contradicting that $\phi\in Max(A)$. Thus it follows that $\alpha=\Delta_A$.

$(2)$ This assertion follows from $(1)$.

\end{proof}

\begin{teorema} Let $\theta$ be a congruence of $A$ such that $\theta\subseteq Rad(A)$. Assume that $\Omega'$ is an orthogonal subset of $B(Con(A/{\theta}))$ which is lifted to a subset $\Omega$ of $B(Con(A))$. Then the set $\Omega$ is unique (w.r.t. the mentioned property) and its elements are orthogonal.

\end{teorema}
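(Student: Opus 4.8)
The plan is to reduce both assertions to a single fact: since $\theta\subseteq Rad(A)$, the restricted Boolean morphism $B(p_{\theta}^{\bullet}):B(Con(A))\rightarrow B(Con(A/{\theta}))$ is \emph{injective}. So first I would establish this injectivity. Suppose $\alpha,\beta\in B(Con(A))$ satisfy $B(p_{\theta}^{\bullet})(\alpha)=B(p_{\theta}^{\bullet})(\beta)$. Applying the Boolean morphism $B(p_{\theta}^{\bullet})$ to the complemented congruence $\alpha-\beta=\alpha\cap\neg\beta$ yields $p_{\theta}^{\bullet}(\alpha-\beta)=B(p_{\theta}^{\bullet})(\alpha)-B(p_{\theta}^{\bullet})(\beta)=\Delta_{A/{\theta}}$, i.e. $(\alpha-\beta)\lor\theta=\theta$, whence $\alpha-\beta\subseteq\theta\subseteq Rad(A)$. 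By Lemma 7.2(1) this forces $\alpha-\beta=\Delta_A$, and symmetrically $\beta-\alpha=\Delta_A$, so $\alpha=\beta$ (equivalently, one may invoke Lemma 7.2(2) directly). Here I use that $p_{\theta}^{\bullet}(\gamma)=(\gamma\lor\theta)/{\theta}$ and that $B(p_{\theta}^{\bullet})$ is a Boolean morphism (Corollary 5.17 of \cite{GKM}).

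Uniqueness then follows at once. If $\Omega_{1},\Omega_{2}\subseteq B(Con(A))$ both lift $\Omega'$, then for each $\beta\in\Omega'$ there are $\alpha_{1}\in\Omega_{1}$ and $\alpha_{2}\in\Omega_{2}$ with $p_{\theta}^{\bullet}(\alpha_{1})=p_{\theta}^{\bullet}(\alpha_{2})=\beta$; injectivity of $B(p_{\theta}^{\bullet})$ gives $\alpha_{1}=\alpha_{2}$. Since every element of $\Omega_{i}$ arises in this way (by definition its image under $p_{\theta}^{\bullet}$ lies in $\Omega'$), I obtain $\Omega_{1}\subseteq\Omega_{2}$ and, by symmetry, $\Omega_{1}=\Omega_{2}$.

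For orthogonality I would take distinct $\alpha,\alpha'\in\Omega$ and put $\beta=p_{\theta}^{\bullet}(\alpha)$, $\beta'=p_{\theta}^{\bullet}(\alpha')$, both in $\Omega'$. By injectivity $\beta\neq\beta'$, so the orthogonality of $\Omega'$ gives $\beta\cap\beta'=\Delta_{A/{\theta}}$. Since $B(p_{\theta}^{\bullet})$ preserves meets and $\alpha\cap\alpha'\in B(Con(A))$, it follows that $p_{\theta}^{\bullet}(\alpha\cap\alpha')=\beta\cap\beta'=\Delta_{A/{\theta}}$, that is $\alpha\cap\alpha'\subseteq\theta\subseteq Rad(A)$. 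A final application of Lemma 7.2(1) to the complemented congruence $\alpha\cap\alpha'$ yields $\alpha\cap\alpha'=\Delta_A$; and because $\alpha\in B(Con(A))$ we have $[\alpha,\alpha']=\alpha\cap\alpha'=\Delta_A$, so $\Omega$ is orthogonal.

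Everything outside the injectivity step is mechanical (the Boolean-morphism properties of $B(p_{\theta}^{\bullet})$ and the formula for $p_{\theta}^{\bullet}$). The only real content is that injectivity step, and the hypothesis $\theta\subseteq Rad(A)$ is exactly what drives it: it is what allows a complemented congruence trapped inside $\theta$ to collapse to $\Delta_A$ through Lemma 7.2. Without $\theta\subseteq Rad(A)$ one could lift the same $\beta\in\Omega'$ to genuinely different complemented congruences, and both uniqueness and the orthogonality transfer would fail, so this is the point I would be most careful to get right.
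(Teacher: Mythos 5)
Your proof is correct and takes essentially the same route as the paper's: both arguments hinge on transporting differences and meets through the Boolean morphism $B(p_{\theta}^{\bullet})$ and then collapsing a complemented congruence contained in $\theta\subseteq Rad(A)$ to $\Delta_A$ via Lemma 7.2. The only difference is organizational --- you isolate the injectivity of $B(p_{\theta}^{\bullet})$ as a single preliminary step and derive both uniqueness and orthogonality from it (incidentally handling the one-sided difference more carefully than the literal statement of Lemma 7.2(2)), whereas the paper runs the identical computation inline in each of the two parts.
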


\begin{proof} Firstly we shall prove the uniqueness of $\Omega$. Consider that $\Omega'$ is lifted to the subsets $\Omega_1,\Omega_2$ of $B(Con(A))$. Let $\alpha$ be a congruence from $\Omega_1$, so $p_{\theta}^{\bullet}(\alpha)\in \Omega'$. Then there exists a congruence $\beta\in \Omega_2$ such that $p_{\theta}^{\bullet}(\beta)= p_{\theta}^{\bullet}(\alpha)$.

Since $p_{\theta}^{\bullet}|_{B(Con(A))}$ is a Boolean morphism it preserves the difference "$-$", hence $p_{\theta}^{\bullet}(\alpha- \beta)= p_{\theta}^{\bullet}(\alpha)-p_{\theta}^{\bullet}(\beta)=\Delta_{A/{\theta}}$, i.e $((\alpha-\beta)\lor\theta)/{\theta}=(\alpha_1\lor\theta)/{\theta}$. Then $(\alpha-\beta)\lor\theta=\theta$, hence $\alpha-\beta\subseteq\theta\subseteq Rad(A)$. By applying Lemma 7.2(2) we get $\alpha=\beta$, so $\alpha\in \Omega_2$.  We have proven that $\Omega_1\subseteq \Omega_2$. The converse inclusion  $\Omega_2\subseteq \Omega_1$ follows in a similar manner, so $\Omega_1=\Omega_2$.

Consider now two distinct congruences $\alpha_1,\alpha_2\in \Omega$. By using Lemma 7.2(2) it is easy to see that $p_{\theta}^{\bullet}(\alpha_1), p_{\theta}^{\bullet}(\alpha_2)$ are two distinct congruences of $\Omega'$, so they are orthogonal. Thus $([\alpha_1,\alpha_2]\lor\theta)/{\theta}= [(\alpha_1\lor\theta)/{\theta},(\alpha_2\lor\theta)/{\theta}]=(\alpha_1\lor\theta)/{\theta}$, hence $[\alpha_1,\alpha_2]\subseteq\theta\subseteq Rad(A)$. By using Lemma 7.2(1) we obtain $[\alpha_1,\alpha_2]=\Delta_A$, i.e. $\alpha_1,\alpha_2$ are orthogonal congruences.

\end{proof}

\begin{teorema} Let $\theta$ be a congruence of $A$ such that $\theta\subseteq Rad(A)$. If $\theta$ has $CBLP$ then any set of atoms of $B(Con(A/{\theta}))$ can be uniquely lifted to a set of atoms of $B(Con(A))$.
\end{teorema}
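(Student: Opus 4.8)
The plan is to upgrade the Boolean morphism $B(p_\theta^{\bullet})$ to a Boolean isomorphism under the present hypotheses, and then to exploit the fact that Boolean isomorphisms carry atoms to atoms. First I would observe that the condition $\theta\subseteq Rad(A)$ forces $B(p_\theta^{\bullet})\colon B(Con(A))\rightarrow B(Con(A/{\theta}))$ to be injective. Indeed, if $\alpha,\beta\in B(Con(A))$ satisfy $p_\theta^{\bullet}(\alpha)=p_\theta^{\bullet}(\beta)$, then, since $p_\theta^{\bullet}|_{B(Con(A))}$ is a Boolean morphism, it preserves the difference $-$, so $p_\theta^{\bullet}(\alpha-\beta)=\Delta_{A/{\theta}}$; hence $(\alpha-\beta)\lor\theta=\theta$, i.e. $\alpha-\beta\subseteq\theta\subseteq Rad(A)$, whence $\alpha-\beta=\Delta_A$ by Lemma 7.2(1). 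Symmetrically $\beta-\alpha=\Delta_A$, so $\alpha=\beta$. This is exactly the argument already employed in Lemma 5.16 (there for $\theta=Rad(A)$). Since $\theta$ has $CBLP$, the morphism $B(p_\theta^{\bullet})$ is by definition surjective, so combining the two facts, $B(p_\theta^{\bullet})$ is a Boolean isomorphism.

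Next I would use the standard fact that a Boolean isomorphism restricts to a bijection between the atoms of its domain and the atoms of its codomain. Concretely, if $\beta$ is an atom of $B(Con(A/{\theta}))$ and $\alpha=(B(p_\theta^{\bullet}))^{-1}(\beta)$, then for any $\gamma\in B(Con(A))$ with $\Delta_A\neq\gamma\subseteq\alpha$ the morphism is order-preserving, so $p_\theta^{\bullet}(\gamma)\subseteq p_\theta^{\bullet}(\alpha)=\beta$, while injectivity (together with $\gamma\neq\Delta_A$) gives $p_\theta^{\bullet}(\gamma)\neq\Delta_{A/{\theta}}$; since $\beta$ is an atom this forces $p_\theta^{\bullet}(\gamma)=\beta=p_\theta^{\bullet}(\alpha)$, and injectivity then yields $\gamma=\alpha$. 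Thus $\alpha$ is an atom of $B(Con(A))$. Consequently, for an arbitrary set $\Omega'$ of atoms of $B(Con(A/{\theta}))$, the set $\Omega=\{(B(p_\theta^{\bullet}))^{-1}(\beta)\mid\beta\in\Omega'\}$ consists of atoms of $B(Con(A))$ and satisfies $\Omega'=\{p_\theta^{\bullet}(\alpha)\mid\alpha\in\Omega\}$, so $p_\theta$ lifts $\Omega'$ to $\Omega$.

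For the uniqueness clause I would invoke Theorem 7.3: distinct atoms of a Boolean algebra meet in $\Delta$, so $\Omega'$ is in particular an orthogonal subset of $B(Con(A/{\theta}))$, and any subset of $B(Con(A))$ that lifts it is unique by that theorem. Alternatively, uniqueness is immediate from the injectivity established above, since then the preimage of each atom of $\Omega'$ is uniquely determined, forcing any lifting set to equal $(B(p_\theta^{\bullet}))^{-1}(\Omega')$.

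The main obstacle is conceptual rather than computational. The statement ranges over \emph{arbitrary} (possibly uncountable) sets of atoms, so the inductive orthogonalization of Theorem 7.1 does not apply directly, as it only produces lifts of countable orthogonal sets. The key point to notice is that the extra hypothesis $\theta\subseteq Rad(A)$ renders that orthogonalization unnecessary: it promotes $B(p_\theta^{\bullet})$ to a genuine isomorphism, after which the existence of the lift, its consisting of atoms, and its uniqueness all follow formally from elementary Boolean-algebra bookkeeping.
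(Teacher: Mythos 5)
Your proposal is correct, but it is organized differently from the paper's proof, and the difference is worth noting. The paper argues element-by-element: existence of the lifting set $\Omega$ comes directly from surjectivity of $B(p_\theta^{\bullet})$ (the definition of $CBLP$); uniqueness is delegated to Theorem 7.3 (after observing that a set of atoms is orthogonal); and the fact that each $\alpha\in\Omega$ is an atom is proved by hand --- take $\Delta_A\neq\beta\subseteq\alpha$ in $B(Con(A))$, show $p_\theta^{\bullet}(\beta)\neq\Delta_{A/\theta}$ via Lemma 7.2(1), use atomicity of $p_\theta^{\bullet}(\alpha)$ to get $p_\theta^{\bullet}(\alpha)=p_\theta^{\bullet}(\beta)$, and then apply the difference trick and Lemma 7.2(2). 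You instead promote the hypothesis $\theta\subseteq Rad(A)$ into a structural statement the paper never makes explicit: $B(p_\theta^{\bullet})$ is injective (your argument is exactly the paper's Lemma 5.16 with $Rad(A)$ replaced by any $\theta\subseteq Rad(A)$, and in fact you are more careful than the paper's Lemma 7.2(2), whose literal statement only yields $\alpha\subseteq\beta$ --- you correctly symmetrize), hence together with $CBLP$ a Boolean isomorphism, after which existence, atomicity of the lifts, and uniqueness are all formal consequences of the fact that order isomorphisms preserve atoms. Your route buys a cleaner and strictly stronger intermediate result (for $\theta\subseteq Rad(A)$, $CBLP$ is equivalent to $B(p_\theta^{\bullet})$ being an isomorphism, the analogue of Theorem 5.17 for arbitrary subradical congruences), makes Theorem 7.3 optional rather than essential, and as you note it sidesteps any worry about the countability restriction of Theorem 7.1 since no orthogonalization is needed. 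What the paper's version buys is economy of means: it reuses Theorem 7.3 as a black box and never needs to verify, or even state, the isomorphism. Both proofs ultimately rest on the same two ingredients --- Lemma 7.2 and the fact that $p_\theta^{\bullet}$ restricts to a Boolean morphism on the centers --- so the difference is one of packaging, but your packaging is genuinely the more conceptual of the two.
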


\begin{proof} Let $\Omega'$ be a set of atoms of the Boolean algebra $B(Con(A/{\theta}))$. Since $\theta$ has $CBLP$ there exists a set $\Omega\subseteq B(Con(A))$ such that $\Omega'=\{p_{\theta}^{\bullet}(\alpha)|\alpha\in \Omega\}$. The uniqueness of $\Omega$ follows by applying Theorem 7.3.

Now we have to prove that any $\alpha\in \Omega$ is an atom of $B(Con(A))$. Let us consider a congruence $\beta\in B(Con(A))$ such that $\beta\neq \Delta_A$ and $\beta\subseteq\alpha$. If we assume that $p_{\theta}^{\bullet}(\beta)= (\beta\lor\theta)/{\theta}= \Delta_{A/{\theta}}$ then $\beta\subseteq\theta\subseteq Rad(A)$, by using Lemma 7.2(1) we get $\beta=\Delta_A$. This contradiction shows that $p_{\theta}^{\bullet}(\beta)\neq \Delta_{A/{\theta}}$. Since $p_{\theta}^{\bullet}(\beta)\subseteq p_{\theta}^{\bullet}(\alpha)$ and $p_{\theta}^{\bullet}(\alpha)$ is an atom of the Boolean algebra $B(Con(A/{\theta}))$ it follows that $p_{\theta}^{\bullet}(\alpha)=p_{\theta}^{\bullet}(\beta)$.

Let us consider the difference $\alpha-\beta\in B(Con(A))$. Since $p_{\theta}^{\bullet}|_{B(Con(A))}$ is a Boolean morphism it follows that $p_{\theta}^{\bullet}(\alpha-\beta)=p_{\theta}^{\bullet}(\alpha)-p_{\theta}^{\bullet}(\beta)=\Delta_{A/{\theta}}$, hence $\alpha-\beta\subseteq\theta\subseteq Rad(A)$. By applying Lemma 7.2(2) we get $\alpha=\beta$, so $\alpha$ is an atom of $B(Con(A))$.

\end{proof}

\begin{corolar} Let $\theta$ be a congruence of $A$ such that $\theta\subseteq \rho(\Delta_A)$. Then any set of atoms of $B(Con(A/{\theta}))$ can be uniquely lifted to a set of atoms of $B(Con(A))$.
\end{corolar}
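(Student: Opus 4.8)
The plan is to obtain this corollary as an immediate specialization of Theorem 7.4, whose two hypotheses are that $\theta\subseteq Rad(A)$ and that $\theta$ has $CBLP$. So the whole task reduces to showing that the single assumption $\theta\subseteq\rho(\Delta_A)$ already forces both of these conditions.

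First I would establish the inclusion $\rho(\Delta_A)\subseteq Rad(A)$. Since $\Delta_A$ is the least element of $Con(A)$, it is contained in every congruence, so the definition of the radical from Section 2 specializes to $\rho(\Delta_A)=\bigwedge\{\phi\in Spec(A)\}$, the meet of all prime congruences. Recalling from Section 2 that $Max(A)\subseteq Spec(A)$, taking the meet over the larger family $Spec(A)$ can only produce a smaller congruence than taking it over $Max(A)$, whence $\rho(\Delta_A)=\bigwedge Spec(A)\subseteq\bigwedge Max(A)=Rad(A)$. Combining this with the hypothesis gives $\theta\subseteq\rho(\Delta_A)\subseteq Rad(A)$, which is the first hypothesis of Theorem 7.4.

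Next I would invoke the corollary stating that any congruence contained in $\rho(\Delta_A)$ has $CBLP$ (Corollary 5.9): since $\theta\subseteq\rho(\Delta_A)$ by assumption, this yields directly that $\theta$ has $CBLP$, which is the second hypothesis of Theorem 7.4. With both hypotheses now verified, Theorem 7.4 applies and gives exactly the conclusion that any set of atoms of $B(Con(A/{\theta}))$ lifts uniquely to a set of atoms of $B(Con(A))$.

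Because every step is a direct consequence of results already available, I do not expect a genuine obstacle here; the corollary is essentially a packaging of Theorem 7.4 once the correct hypotheses are recognized. The only point that requires a moment of care is the inclusion $\rho(\Delta_A)\subseteq Rad(A)$, and in particular remembering that enlarging the index set of a meet of congruences can only shrink the result, so that the intersection over all of $Spec(A)$ lies below the intersection over $Max(A)$. Everything after that is a verbatim citation of Corollary 5.9 and Theorem 7.4.
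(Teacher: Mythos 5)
Your proof is correct and follows exactly the paper's own route: apply Corollary 5.9 to get $CBLP$ for $\theta$, note $\theta\subseteq\rho(\Delta_A)\subseteq Rad(A)$ (which holds since $Max(A)\subseteq Spec(A)$ forces the meet over all primes to lie below the meet over maximal congruences), and then invoke Theorem 7.4. Your explicit justification of the inclusion $\rho(\Delta_A)\subseteq Rad(A)$ is a detail the paper leaves implicit, but otherwise the arguments coincide.
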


\begin{proof} Let $\Omega'$ be a set of atoms of $B(Con(A/{\theta}))$. According to Corollary 5.9, the hypothesis $\theta\subseteq \rho(\Delta_A)$ implies that $\theta$ has $CBLP$. Observing that $\theta\subseteq \rho(\Delta_A)\subseteq Rad(A)$ and applying Theorem 7.4, it follows that $\Omega'$ can be uniquely lifted to a set of atoms of $B(Con(A))$.
\end{proof}


\begin{thebibliography}{200}
\bibitem{Aghajani} M. Aghajani, A. Tarizadeh, Characterization of Gelfand rings, specially clean rings and their dual rings, Results Math.,75:125,2020
\bibitem{Agliano} P. Agliano, Prime spectra in modular varieties, Algebra Universalis, 30, 1993, 581 - 597
\bibitem{Atiyah} M. F. Atiyah, I. G. MacDonald, Introduction to Commutative Algebra, Addison-Wesley Publ. Comp., 1969
\bibitem{Burris} S. Burris, H. P. Sankappanavar, A Course in Universal Algebra, Springer, 1981

\bibitem{Al-Ezeh2} H. Al- Ezeh, Further results on reticulated rings, Acta Math. Hung., 60 (1-2), 1992, 1 - 6
\bibitem{BalbesDwinger} R. Balbes, Ph. Dwinger, Distributive Lattices, Univ. of Missouri Press, 1974
\bibitem{Banaschewski} B. Banaschewski, Gelfand and exchange rings: their spectra in pointfree topology, The Arabian Journal for Science and Engineering, 25, No 2C, 2000, 3 - 22
\bibitem{B1} B. Barania Nia, A. B. Saeid, Study of pseudo $BL$-algebras in view of left Boolean Lifting Property, Applications and Applied Math., 13(1), 2018, 354 - 381
\bibitem{B2} B. Barania Nia, A. B. Saeid, Classes of pseudo $BL$-algebras in view of right Boolean Lifting Property, Trans. of A. Razmadze Math. Institute, 172, 2018, 146 - 163

\bibitem{Birkhoff} G. Birkhoff, Lattice Theory, 3rd ed., AMS Collocquium Publ. Vol. 25, 1967

\bibitem{Burris} S. Burris, H. P. Sankappanavar, A Course in Universal Algebra, Graduate Texts in Mathematics, 78, Springer Verlag, 1881

\bibitem{Cheptea} D. Cheptea, G. Georgescu, C. Mure\c{s}an, Boolean lifting properties for bounded distributive lattices, Scientific Annals of Computer Science, 25, 2015, 29 - 67
\bibitem{DiNola} A. Di Nola, G. Georgescu, L. Leu\c{s}tean, Boolean products of $BL$-algebras, J. Math. Analysis and Applications, 251(1), 2000, 106 - 131
\bibitem{Dickmann} M. Dickmann, N. Schwartz, M. Tressl, Spectral Spaces, Cambridge Univ. Press., 2019

\bibitem{Fresee} R. Fresee, R. McKenzie, Commutator Theory for Congruence Modular Varieties, Cambridge Univ. Press, 1987
\bibitem{Galatos} N. Galatos, P. Jipsen, T. Kowalski, H. Ono, Residuated Lattices: An Algebraic Glimpse at Structural Logics, Studies in Logic and The Foundation of Mathematics, 151, Elsevier, 2007
\bibitem{Filipoiu} A. Filipoiu, G. Georgescu, Compact and Pierce representations of $MV$-algebras, Rev. Roum. Math. Pures Appl.,40(7), 1995, 599 - 618
\bibitem{GG} G. Georgescu, Reticulation functor and the transfer properties, manuscript, 2020
\bibitem{GCM} G. Georgescu, D. Cheptea, C. Mure\c{s}an, Algebraic and topological results on lifting properties in residuated lattices, Fuzzy Sets Systems,  271, 2015, 102-132.

\bibitem{GM2} G. Georgescu, C. Mure\c{s}an, The reticulation of a universal algebra, Scientific Annals of Computer Science, 28, 2018, 67 - 113
\bibitem{GM3} G. Georgescu, C. Mure\c{s}an, Congruence Boolean Lifting Property, J. Multiple Valued Log. Soft Comput., 28(1), 2018, 67 - 113
\bibitem{GM4} G. Georgescu, C. Mure\c{s}an, Congruence extensions in congruence modular varieties, in preparation
\bibitem{GKM} G. Georgescu, L. Kwuida, C. Mure\c{s}an, Functorial properties of the reticulation of a universal algebra, J. Applied Logic, 8(5), 2021, 1123 - 1168

\bibitem{GeorgescuVoiculescu2} G. Georgescu, I. Voiculescu, Some abstract maximal ideal-like spaces, Algebra Universalis, 26, 1989, 90 - 102
\bibitem{f} A. W. Hager, C. M. Kimber, W. Wm. McGovern, Clean unital l-groups, Math. Slovaca, 63, 2013, 979 - 992

\bibitem{Hochster} M. Hochster, Prime ideals structures in commutative rings, Trans.Amer.Math.Soc., 142, 1969, 43 - 60
\bibitem {b} N. A. Immormino, Some Notes on Clean Rings, Bowling Green State University, 2012
\bibitem{Jipsen} P. Jipsen, Generalization of Boolean products for lattice-ordered algebras, Annals Pure Appl. Logic, 161, 2009, 224 - 234
\bibitem{Johnstone} P. T. Johnstone, Stone Spaces, Cambridge Univ. Press, 1982
\bibitem{Jong} A.J. de Jong et al., Stacks Project, see http://stacks.math.columbia.edu
\bibitem{Kermani} N. A. Kermani, E. Eslami, A. B. Saeid, Central lifting property for orthomodular lattices, Math. Slovaca, 70(6), 2020, 1307 - 1316
\bibitem{Kollar} J. Kollar, Congruences and one - element subalgebras, Algebra Universalis, 9, 1979, 266 - 276

\bibitem{Lenzi} G. Lenzi, A. Di Nola, The spectrum problem for abelian $l$ - groups and $MV$ - algebras, Algebra Universalis, 81(3), 2020
\bibitem{g} L. Leu\c{s}tean, Representations of many-valued algebras, Editura Universitara, Bucharest, 2010

\bibitem{c} W. Wm. McGovern, Neat rings, J. Pure Appl. Algebra, 205, 2006, 243 - 265

\bibitem{Muresan} C. Mure\c{s}an, Algebras of many - valued logic. Contributions to the theory of residuated lattices, Ph.D. Thesis, University of Bucharest, 2009
\bibitem{a} W. K. Nicholson, Lifting idempotents and exchange rings, Trans. Amer. Math. Soc., 229, 1977, 268 - 278

\bibitem{Simmons} H. Simmons, Reticulated rings, J. Algebra, 66, 1980, 169 - 192

\bibitem{Tar4} A. Tarizadeh, P.K. Sharma, Characterizing lifting and finiteness of idempotents, ArXiv :1806.03599[math.AC] 26 Jan. 2021
\end{thebibliography}
\end{document}